\theoremstyle{definition}
\newtheorem{theorem}[equation]{Theorem}
\newtheorem{lemma}[equation]{Lemma}
\newtheorem{definition}[equation]{Definition}
\newtheorem{remark}[equation]{Remark}
\newtheorem{proposition}[equation]{Proposition}
\newcommand{\norm}[1]{{\left \Vert{#1}\right \Vert}}
\newtheorem*{theorem*}{Theorem}
\newcommand{\Rbb}{\mathbb{R}}
\newcommand{\ebf}{\mathbf{e}_{3}}
\numberwithin{equation}{section}
\begin{document}

\title[Time-periodic weak solutions for FSI]{Time-periodic weak solutions for an incompressible Newtonian fluid interacting with  an elastic plate.}
%Newtonian fluid interacting with a Koiter type shell under no-slip boundary conditions.}

%    Information for first author
\author{Claudiu M\^{i}ndril\u{a}}
%    Address of record for the research reported here
\address{Department of Analysis, Faculty of Mathematics and Physics, Charles University,
Sokolovsk\'{a} 83, 18675, Prague, Czech Republic }
\email{mindrila@karlin.mff.cuni.cz}
%    \thanks will become a 1st page footnote.
%\thanks{The first author was supported in part by NSF Grant \#000000.}

%    Information for second author
\author{Sebastian Schwarzacher}
\address{Department of Analysis, Faculty of Mathematics and Physics, Charles University,
Sokolovsk\'{a} 83, 18675, Prague, Czech Republic }
\email{schwarz@karlin.mff.cuni.cz}
%\thanks{Support information for the second author.}

%    General info
%\subjclass[2000]{Primary 54C40, 14E20; Secondary 46E25, 20C20}

\date{\today}

%\dedicatory{This paper is dedicated to our advisors.}

%\keywords{Mathematics!}

\begin{abstract}
Under the action of a time-periodic external force we prove the existence of at least one time-periodic weak solution for the interaction between a three-dimensional incompressible fluid, governed by the Navier-Stokes equation and a two dimensional elastic plate. The challenge is that the Eulerian domain for the fluid changes in time and is a part of the solution. We introduce a two fixed-point methodology: First we construct a time-periodic solutions for a given variable time-periodic geometry. Then in a second step a (set-valued) fixed point is performed w.r.t.\ the  geometry of the domain.  
The existence relies on newly developed a-priori estimates applicable for both coupled and uncoupled variable geometries.  Due to the expected weak regularity of the solutions such Eulerian estimates are unavoidable. Note in particular, that only the fluid is assumed to be dissipative; the here produced a-priori estimates show that its possible to exploit the dissipative effects of the fluid also for the solid deformation. The existence of time-periodic solutions for a given geometry is valid for arbitrary large data. The existence of periodic coupled solutions to the fluid-structure interaction is valid for all data that exclude a self-intersection a-priori.
\end{abstract}

%\noindent\textsc{MSC (2010):35Q35 (primary); 35Q30, 35J60, 35J70, 35J75.} 

\maketitle
\noindent\textsc{Keywords:} Navier-Stokes equations,  Periodic solutions, Fluid-structure interaction, Elastic plates

%\seb{Use {\em Energy estimate} for the estimate on $\nabla u$ and {\em Regularity estimate} for the improved estimate.} 

%\seb{{\bf Homework:} 1) Change everything according to the new assumption. In particular use the $c_0$ when absorbing into $K$. 
%Do the precise estimate after the fixed point, since the fixed point does not have any geometric restrictions ($\delta$ is given). 
%2) Make the dependence precise!}
%\tableofcontents
\section{Introduction}

\noindent
Periodic solution naturally appear in the context of fluid-structure interactions. Applications are from various disciplines. Examples are from engineering (turbines, steam engines, oscillating bridge-decks) or biology (blood-flow). The latter one is the model example for this article: We consider an elastic wall that interacts with an incompresible fluid modeled by the unsteady Navier-Stokes equations in 3D. We seek for time-periodic  weak  solutions.  While many results for elastic plates or shells interacting with fluids have been shown in the last decades, the relevant regime of time-periodic solutions has rarely been touched. For an overview on applications and on the state of the art for such fluid-structure interactions we refer to the recent survey~\cite{Canic20} and the references therein. 

The present article aims to fill this gap by providing a methodology that allows for the construction of time-periodic weak solutions.  Our main result (Theorem~\ref{thm:main}) is the existence of at least one  time-periodic  weak solution for given periodic data. The constraints on the data are given only to exclude self-intersections of the elastic structure a-priori. Observe that in three space dimensions and for large data the frame of weak solutions is unavoidable (due to the open problem of regularity for Navier-Stokes equations). However in this setting energy estimates do {\em not suffice} to construct periodic solutions. Hence the key step is a new  regularity estimate. It is a non-trivial estimates of the displacement variable of the elastic wall, which is coupled to the fluid velocity via its time derivative only.
\subsection{State of the art}
The study of the  interaction between fluids and solids from a mathematical point of view has received a lot of interest in the last years. Several difficulties occur when studying these interactions. This includes among others the changes of the Eulerian domain in time, the choice of appropriate coupling (boundary) conditions or the respective concepts of weak solutions. The interaction with elastic solids, in particular, turned out  to be particularly difficult  due to the mixed parabolic-hyperbolic nature of the system, see also \cite{Bou05, RV14} and the references therein. It seems that due to the variable-in-time Eulerian geometry the matter of periodic solutions in that frame-work seems rather untouched.

In contrast, the literature for cylindrical time-space domains the existence of periodic solutions for incompressible Navier-Stokes systems is vast and we refer to \cite{GK18} where several results are surveyed.  A first existence result was shown in~\cite{Pr63} . Here, the periodicity is obtained at the discrete (Galerkin) level, by solving the corresponding ODE with a time-periodic condition; the latter objective is achieved by noticing that such a solution appears as a fixed point of the function that maps initial values to the endpoint values of the corresponding solutions.
This strategy has proved to be quite robust and actually it was  recently  employed in \cite{Ab20} in order to prove weak time-periodic solutions for a class of  non-Newtonian fluids.
The so-far available literature on fluid-structure interactions seems to be restricted to rigid body motions moving periodically inside a Navier-Stokes fluid. See the seminal works by Galdi at all~\cite{GS06, G13, G20, G20-2} and the references therein. See also \cite{BGM19}.
\\
{Further, we wish to mention the following recent works, where a similar set up is considered, but strong solutions are studied \cite{CK21,FKTW21}.
}

As indicated before we are concerned with \emph{ the interaction of a  viscous and incompressible fluid interacting with an elastic solid situated and being part of  the boundary of the fluid domain. }
The first existence result for {\em weak solutions} to the Cauchy problem of this subclass appeared in \cite{Gr05, Gr08}; there, the existence was proved for the case of a 3D/2D coupling of a viscous and incompressible fluid interacting  with an \emph{elastic plate} and resp.~with an \emph{elastic shell}. The motion of the fluid and shell is assumed to happen in the normal direction and the initial geometry has a \emph{flat} moving part of the boundary, where the shell is attached.   The next steps towards non-flat moving boundary and a linearized Koiter-type shell was considered in \cite{LR14},  while the more recent work \cite{MS19} deals with non-linear models of Koiter shells in the spirit of \cite{Ci05}. While this paper follows the strategy of fixed point methods there are also more constructibe approaches based on the A.L.E. method, see~\cite{MC13}. Finally let us mention recent progress in compressible and/or heat conducting fluids and shells~\cite{BS18,TW20,BS21}.

There are also many results tackling the existence/uniqueness/regularity of  \emph{strong solutions}. Among them let us refer to a recent work discussing the existence of strong time-periodic solutions within a 2D/1D coupling with a visco-elastic beam~\citep{Cas19}. There an abstract semi-group theory is used that is suitable for strong solutions that seems not to be suitable for the framework of weak solutions. We also point out the recent work \cite{MRR20} where a  maximal-in-time well possedness result of strong solutions for an incompressible fluid interacting with a damped shell  situated on the boundary is proved.
Let us also mention the recent  and excellent survey \cite{Canic20} where  besides a vast  surveryed literature also several current open problems  of   \emph{fluid-structure interation} were formulated.

\subsection{Moving domains}\label{subsection:domain}
Throughout this paper we consider a non-empty, open and bounded domain $\Omega \subset \mathbb{R}^{3}$ of class $C^{4}$  whose  boundary $ \partial \Omega=M \cup \Gamma$ consists of two parts:  $
\Gamma$ -the fixed part, and   $M$-the compact complement of $\Gamma$ which represents the moving part. Since we consider the plate case here, we assume that $M$ is flat
\[
 M=\omega\times\left\{ 0\right\} 
\]
where $\omega \subset \mathbb{R}^{2}$ is an open and  bounded Lipschitz domain. Moreover we assume that the {\em outer normal} of $\Omega$ at $M$ is $\ebf$.
On the moving part we attach an elastic shell whose displacement is denoted by $\eta : \omega \mapsto [a,b]$ and $\eta=0$ on $\partial \omega$ where $a,b\in \Rbb$. During this work we shall assume that the elastic deformation is parametrised as a graph and that it will change in {\em $\ebf$ direction only}.  
We define $x'=(x_1,x_2)$ and $z=x_3$.
\[
\phi_{\eta}(x',0)=(x',\eta(x'))\text{ for } (x',0)\in M\text{ and }\phi_{\eta}(x',z)=(x',z)\text{ on }\Gamma,
\]
and 
\begin{equation}\label{eqn:moving-domain}
\Omega_{\eta}:=(\Omega\setminus \left\{ (x',z) : 0>z>\eta\left(x'\right)\right\} ) \cup \left\{ (x',z) : 0\leq z<\eta\left(x'\right)\right\}.
\end{equation}
this implies by choosing $a,b$ appropriately that $\phi_{\eta}:\partial\Omega\to \partial\Omega_{\eta}$ is a homeomorphism.

%, extended to $\partial \Omega$ by zero. 
Assuming that $\eta\in C^k_0(\omega)$ it is possible to extend $\phi_{\eta}:\Omega\to\Omega_{\eta}$, such that it is a $C^k$-diffeomorphism for $k\in\{0,1,2,3,4\}$.  

To simplify the notation, we assume in the following:
\begin{align}
\label{eq:domain}
\text{If } \norm{\eta}_\infty\leq \kappa\text{ and }\eta\in C^k_0(\omega)\text{, then }\phi_\eta \text{ is a $C^k$ diffeomorphism}.
 \end{align}
%\begin{equation}
%\phi_{\eta}:\partial\Omega\mapsto\phi_{\eta}(\partial\Omega)=\partial\Omega_{\eta},\quad\phi_{\eta}\left(x\right)=x+\eta^*\left(x\right)\mathbf{e}_{3}=\left(x_{1},x_{2},x_{3}+\eta\left(t,x_{1},x_{2}\right)\right)
%\end{equation} 
% is a homeomorphism and even a $C^{k}$ diffeomorphism if $\eta \in C^{k}(\partial \Omega)$ for $k \in \lbrace 1, \ 2, \ 3\rbrace$.  
By denoting 
\begin{equation}
I:=(0,T)\ \text{and} \  I \times \Omega_{\eta} := \bigcup_{t\in I} \left\{ t\right\} \times\Omega_{\eta\left(t\right)}
\end{equation}  we can now introduce the Lebesgue and Sobolev spaces adapted to the moving domains $\Omega_{\eta}$. Namely,  for $1 \le p, \ q \le \infty$ we have
\begin{align*}
L^{p}\left(I;L^{q}\left(\Omega_{\eta}\right)\right):= & \left\{ \mathbf{v}\in L^{1}\left(I\times\Omega_{\eta}\right):\mathbf{v}\left(t,\cdot\right)\in L^{q}\left(\Omega_{\eta\left(t\right)}\right)\ \text{for a .e.}\ t\in I,\ \left\Vert \mathbf{v}\left(t,\cdot\right)\right\Vert _{L^{q}\left(\Omega_{\eta\left(t\right)}\right)}\in L^{p}\left(I\right)\right\} \\
L^{p}\left(I;W^{1,q}\left(\Omega_{\eta}\right)\right):= & \left\{ \mathbf{v} \in L^{1}\left(I\times\Omega_{\eta}\right):\mathbf{v}\left(t,\cdot\right)\in W^{1,q}\left(\Omega_{\eta\left(t\right)}\right)\ \text{for a .e.}\ t\in I,\ \left\Vert \mathbf{v}\left(t,\cdot\right)\right\Vert _{W^{1,q}\left(\Omega_{\eta\left(t\right)}\right)}\in L^{p}\left(I\right)\right\} .
\end{align*}
This spaces admit  the  usual \emph{trace} and \emph{extension} operators as long as $a<\eta<b$ and as long as $\eta\in C^{0,1}$; see \cite[Lemma 1]{Gr05} for further details.

\subsection{The Koiter elastic energy}\label{subsection:Koiter}
With the previous notations we can now introduce 
 the \emph{change of metric tensor}
\begin{equation}\label{eqn:tensor-metric}
\mathbb{G}_{ij}\left(\eta\right):=\partial_{i}\eta\cdot\partial_{j}\eta,\quad i,j=1,2
\end{equation}
and, after introducing the normal vector $\nu_{\eta}$ to the deformed surface via
\begin{equation}
 \nu_{\eta}:=\partial_{1} \phi_{\eta}\times\partial_{2} \phi_{\eta}
\end{equation}
 the \emph{change of curvature tensor} as
\begin{equation}\label{eqn:tensor-curvature}
\mathbb{R}_{ij}^{\sharp}:=\partial_{ij}\eta, \quad i,j=1,2.
\end{equation}

All-in-all,  if  $h$ denotes the thickness of the shell,  then the \emph{Koiter energy} of the plate reads as follows: 
\begin{equation}\label{eqn:Koiter-energy}
K\left(\eta\right):=K\left(\eta,\eta\right):=\frac{h}{2}\int_{\omega}\mathbb{A}:\mathbb{G}\left(\eta\right)\otimes\mathbb{G}\left(\eta\right)dx'+\frac{h^{3}}{6}\int_{\omega}\mathbb{A}:\mathbb{R}^{\sharp}\left(\eta\right)\otimes\mathbb{R}^{\sharp}\left(\eta\right)dx'.
\end{equation}
Here
\begin{equation}\label{eqn:lame}
\mathbb{A}^{ijkl}=\frac{4\lambda_s u}{\lambda_s+2\mu_s}\mathbb{A}^{ij}\mathbb{A}^{kl}+4\mu_s\left(\mathbb{A}^{ik}\mathbb{A}^{jl}+\mathbb{A}^{il}\mathbb{A}^{jk}\right)
\end{equation}
is a fourth-order tensor known as \emph{the elasticity (or stiffness) tensor},  and $\lambda_s,  \mu_s$ are the Lam\'{e} coefficients; see \cite[p. 162]{Ci05}.
Please observe that in the flat geometry $K$ is coercive in $H^{2}$.  More precisely, there exists a constant $c_0=c(\lambda_s,\mu_s, h)$ such that  
\begin{equation}\label{eqn: coercivity-K}
K\left(\eta\right)\ge c_0\left\Vert \nabla^{2}\eta\right\Vert _{L^{2}\left(\omega\right)}^{2}\quad\forall\eta\in H_{0}^{2}(\omega)
\end{equation}

\subsection{The coupled system}
We assume that the domain $\Omega_{\eta(t)}$ is filled  at any time $t\in (0,T)$ an \emph{incompressible} fluid of viscosity  $\mu$ and density $\rho_f$.  Its velocity flow denoted by
\[\mathbf{u}: I \times \Omega_{\eta} \mapsto \mathbb{R}^{3}
\] solves the  system 
\begin{equation}\label{eqn:Navier}
\begin{cases}
\rho_f(\partial_{t}\mathbf{u}+\left(\mathbf{u}\cdot\nabla\right)\mathbf{u})=\text{div}\sigma\left(\mathbf{u},p\right)+f & \text{in}\ I\times\Omega_{\eta(t)}\\
\text{div}\left(\mathbf{u}\right)=0 & \text{in}\ I \times\Omega_{\eta(t)}
\end{cases}
\end{equation}
where $\sigma(\mathbf{u},p)$ is the \emph{Cauchy stress tensor}  
\begin{equation}
\sigma(\mathbf{u},p)=-p I_{3}+2\mu D(\mathbf{u})
\end{equation}
where $D (\mathbf{u})$ denotes the symmetric gradient
\begin{equation}
D(\mathbf{u}) :=\frac{\nabla \mathbf{u} +\nabla \mathbf{u} ^{T}}{2}.
\end{equation}
The density of the external forces is given by
\begin{equation}
\mathbf{f}:\mathbb{R}\times\mathbb{R}^{3}\mapsto\mathbb{R}^{3},\ \mathbf{f}\left(t,\cdot\right)=\mathbf{f}\left(t+T,\cdot\right)\ \text{for a.e.} \ t\in I
\end{equation}
  Now, concerning the elastic shell, we denote its density by $\rho_s$, its thickness by $h$ and its  displacement w.r.t. the reference configuration by
\begin{equation}
\eta: I \times  \omega \mapsto \mathbb{R}.
\end{equation}
Then $\eta$  solves the following  hyperbolic system 
\begin{equation}\label{eqn:Koiter}
\begin{cases}
h\rho_s \partial_{tt}\eta+K^{\prime}\left(\eta\right)=g+\mathbf{F}\cdot\mathbf{e}_{3} & \text{in}\ I \times\omega\\
\eta=\nabla\eta=0 & \text{on}\ I \times\partial\omega
\end{cases}
\end{equation}
where $K^{\prime}(\eta)$ is the $L^{2}$-gradient of the Koiter energy $K$.
Here $\mathbf{F}$ denotes  the force exerted by the fluid on the shell, that is 
\begin{equation}
\mathbf{F}:=-\sigma\left(\mathbf{u},p\right)\left(t,\phi_{\eta}\left(t,x'\right)\right)\nu_{\eta}\cdot\mathbf{e}_{3},\ \left(x'\right)\in\omega
\end{equation}
 which is the stress tensor evaluated in normal direction.
 The distribution of external forces acting on the shell is denoted by 
 \begin{equation}
 g:\mathbb{R}\times\omega\mapsto\mathbb{R},\ g\left(t,\cdot\right)=g\left(t+T,\cdot\right)\ \text{for a.e.}\ t\in I
 \end{equation}

Regarding the boundary conditions, we assume that the fluid and the shell move with the same speed which means we assume the \emph{no-slip boundary condition}  $\text{tr}_{\eta}\mathbf{u}=\partial_{t}\eta\mathbf{e}_{3}\ $, or, more explicitly
\begin{equation}\label{eqn:no-slip-bdry-cond}
\mathbf{u}\left(t,x',\eta\left(t,x'\right)\right)=\left(0,0,\partial_{t}\eta\left(t,x'\right)\right),\quad x'\in\omega\text{ and }\mathbf{u}=0\text{ in }\Gamma.
\end{equation}

  the full system that will be studied reads as follows:

\begin{equation}\label{eq:system}
\begin{cases}
\rho_{f}(\partial_{t}\mathbf{u}+\left(\mathbf{u}\cdot\nabla\right)\mathbf{u})=\text{div}\sigma\left(\mathbf{u},p\right)+\mathbf{f} & \text{\ in}\ I\times\Omega_{\eta}\\
\text{div}\mathbf{u}=0 & \text{\ in}\ I\times\Omega_{\eta}\\
\mathbf{u}=0 & \ \text{on\ }I\times\left(\partial\Omega\setminus M\right)
\\
\mathbf{u}\circ\phi_{\eta}=\partial_{t}\eta\mathbf{e}_{3} & \ \text{in}\ I\times\omega
\\
\rho_{s}h\partial_{tt}\eta+K^{\prime}\left(\eta\right)=-\sigma\left(\mathbf{u},p\right)\left(t,\phi_{\eta\left(t\right)}\left(t,y\right)\right)\nu_{\eta}\cdot\mathbf{e}_{3}+g & \ \text{in}\ I\times\omega\\
\eta=\nabla\eta=0 & \text{\ on\ }I\times\partial\omega\\
\eta(0,\cdot)=\eta(T,\cdot),\ \partial_{t}\eta(0,\cdot)=\partial_{t}\eta(T,\cdot) & \text{\ in}\ \omega
\\
\mathbf{u}(0,\cdot)=\mathbf{u}(T,\cdot) & \ \text{in}\ \Omega_{\eta\left(0\right)}.
\end{cases}
\end{equation}
In order to reduce the notation we assume (without loss of generality) that $\rho_{f}=\rho_{s}h=\mu=1$  from now on .Throughout this work, we shall denote the surface measure of $\omega$ by $dx'$.

\begin{remark}[On the restrictions of the reference geometry]
\label{rmk:int-eta-constant-time}
Note that since $\mathbf{e}_{3}= (0,0,1)$ , from \eqref{eqn:no-slip-bdry-cond} and from the incompresiblity condition $\text{div} \mathbf{u}=0$ we find  that  the  volume of the whole cavity  $\int_{\omega} \eta(t,x)dx'$  has to remain constant. That is
\begin{equation}
\label{eq:mean}
\int_{\omega}\eta\left(t,\cdot\right)dx'=:m\text{ for all }t\in I.
\end{equation}
In the time-periodic setting this mean value $m$ is an independent constraint that can be given a-priori.
In fact, the constant mean-value is a notorious issue in fluid-structure interaction: there is a coupling of a deforming and potentially stretching solid with an incompressible fluid that conserves volume. If the prescribed direction of deformation is non orthogonal (or more explicit if the reference geometry has a non-zero Gauss curvature) equation \eqref{eq:mean} becomes non-linear w.r.t. the deformation (along the reference coordinate direction). That is the reason why in the theory for the existence of incompressible fluids the non-orthogonal deformations where an open problem for some time. The existence for the Cauchy problem was only shown in~\cite{LR14} by introducing respective non-linear correctors. We wish to emphasize that it is this key difficulty that reduces the set-up of this paper to flat geometries and excludes non-flat Koiter shells.% Indeed, in our time-periodic set-up we require additional and uniform estimates on $\eta$ itself and not on $\partial_{t} \eta$, which is estimated as the trace of the dissipative fluid-velocity. 
\end{remark}

\subsection{The main result}

The main theorem of this paper  is the following\footnote{For the precise defintion of the function spaces please see Subsection~\ref{ssec:defweaksol}}
\begin{theorem}\label{thm:main} 

There exists a constant $C_{0}=C_{0}(c_{0},\Omega,\alpha, M,\kappa,T)$    such that  for any $m \in \mathbb{R}$ and any given forces
\[
\left(\mathbf{f},g\right)\in\left(L_{\text{per}}^{2}\left(0,T\right);L^{2}\left(\mathbb{R}^{3}\right)\right)\times\left(L_{\text{per}}^{2}\left(0,T\right);L^{2}\left(\omega\right)\right)
\]
such that 
\[ m^2+
\int_{0}^{T}\int_{\mathbb{R}^{3}}\left|\mathbf{f}\right|^{2}dxdt+\int_{0}^{T}\int_{\omega}\left|g\right|^{2}dx'dt\le C_0
\]
 the problem \eqref{eq:system}  admits at least one weak time-periodic solution $(\mathbf{u},\eta)$ in the sense of Definition~\ref{def:weak-periodic-solution}, with
\begin{equation}\label{eqn:eta=m}
\int_{\omega}\eta(t,\cdot)dx'=m \text{ for all }t\in I.
\end{equation} 
Furthermore it holds that \begin{equation}\label{eqn:energy-estimate-1}
\left\Vert \mathbf{u}\right\Vert _{L^{\infty}\left(I;L^{2}\left(\Omega_{\eta}\right)\right)\cap L^{2}\left(I;W_{0,\text{div}}^{1,2}\left(\Omega_{\eta}\right)\right)}+\left\Vert \eta\right\Vert _{L^{\infty}\left(I;H_{0}^{2}\left(\omega\right)\right)\cap W^{1,\infty}\left(I;L^{2}\left(\omega\right)\right)}\le c\left(C_{0}^{2}+C_{0}\right)
\end{equation} with  $c$ depending on $\Omega$, $\kappa$, $c_0$, $T$  only. 
Additionally,    the following  energy inequality holds
\begin{equation}\label{eqn:energy-estimate-2}
\int_{0}^{T}\int_{\Omega_\eta\left(t\right)}\left|\nabla \mathbf{u}\right|^{2}dxdt\le\int_{0}^{T}\int_{\Omega_\eta\left(t\right)}\mathbf{f}\cdot \mathbf{u}dxdt+\int_{\omega}g\cdot\partial_{t}\eta dx'dt.
\end{equation}
\end{theorem}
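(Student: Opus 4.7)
The plan follows the two-layer fixed-point strategy announced in the abstract: an outer (set-valued) fixed point in a convex class of admissible time-periodic geometries, with an inner fixed point that produces, on each frozen geometry, a time-periodic Leray--Hopf flow.

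\textbf{Inner fixed point (fluid on a prescribed moving geometry).} Fix $\tilde\eta$ in the convex class
\[
\mathcal{K}:=\{\eta\in L^{\infty}(I;H^{2}_{0}(\omega))\cap W^{1,\infty}(I;L^{2}(\omega)):\|\eta\|_{\infty}\le\kappa,\ \textstyle\int_{\omega}\eta\,dx'=m,\ \eta(0)=\eta(T)\},
\]
intersected with a ball whose radius is dictated by the a priori estimates below. On $I\times\Omega_{\tilde\eta(t)}$ I would solve the inhomogeneous Navier--Stokes system with $u\circ\phi_{\tilde\eta}=\partial_{t}\tilde\eta\,\ebf$ on $I\times\omega$ and $u=0$ on $\Gamma$, using a Galerkin basis of divergence-free fields transported through the diffeomorphism $\phi_{\tilde\eta}$ of \eqref{eq:domain}. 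At each Galerkin level the reduced ODE has $T$-periodic coefficients; the Prouse return map $u_{0}\mapsto u_{N}(T;u_{0})$ sends a sufficiently large ball of $\Rbb^{N}$ into itself thanks to the energy identity, so Brouwer's theorem produces a $T$-periodic Galerkin solution. The limit $N\to\infty$ uses the uniform a priori estimates of the next step, an Aubin--Lions compactness statement pulled back through $\phi_{\tilde\eta}$, and the standard handling of the convective term.

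\textbf{Outer fixed point (update of the geometry).} Given the (possibly non-unique) fluid $u$ produced above, I would reconstruct the pressure by a Bogovskii-type correction on $\Omega_{\tilde\eta}$ and substitute the resulting traction $-\sigma(u,p)\nu_{\tilde\eta}\cdot\ebf$ into the plate equation \eqref{eqn:Koiter}, viewed as a linear $T$-periodic hyperbolic problem in $\eta$ with prescribed mean $m$. A second Prouse-type argument on the $(\eta,\partial_{t}\eta)$-evolution (possibly after a vanishing viscous regularisation to gain the compactness needed in the conservative hyperbolic setting) yields a $T$-periodic displacement. The resulting set-valued map $\mathcal{F}\colon\mathcal{K}\rightrightarrows\mathcal{K}$ has relatively compact values and closed graph in $C^{0}(\overline{I}\times\overline{\omega})$ by interpolation from $L^{\infty}H^{2}\cap W^{1,\infty}L^{2}$; the Kakutani--Fan--Glicksberg theorem then delivers a fixed point $\eta\in\mathcal{F}(\eta)$, which is the desired coupled weak solution of \eqref{eq:system}.

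\textbf{Main obstacle: a priori bounds beyond energy.} The technical heart of the proof is to establish \eqref{eqn:energy-estimate-1} uniformly in $\tilde\eta\in\mathcal{K}$ and across the Galerkin levels. Testing with $u$ and $\partial_{t}\eta$, using the moving-domain Reynolds identity, and exploiting $T$-periodicity directly gives \eqref{eqn:energy-estimate-2} and hence control of $\|u\|_{L^{2}(I;H^{1})}$ by the data; however, \emph{periodicity alone does not control the $L^{\infty}_{t}$-energy}, because the elastic energy $K(\eta)$ is not dissipated by the plate itself. The way around is to exploit the coupling quantitatively: the trace identity $u\cdot\ebf|_{M}=\partial_{t}\eta$ together with Korn's inequality on $\Omega_{\tilde\eta}$ transfers the fluid dissipation into $L^{2}_{t}$ control of $\partial_{t}\eta$ in a fractional space on $\omega$; a Poincar\'e inequality in time on the periodic interval $I$, combined with the mean-value constraint \eqref{eq:mean}, upgrades this to an $L^{\infty}_{t}L^{2}_{x'}$ bound on $\eta$; the momentum equation together with a pressure estimate then feeds this back into an $L^{\infty}_{t}$ bound on $K(\eta)\gtrsim\|\nabla^{2}\eta\|_{L^{2}}^{2}$ via the coercivity \eqref{eqn: coercivity-K}. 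This is precisely the step forcing the smallness of $C_{0}$: the convective nonlinearity and the $\tilde\eta$-dependence of the Korn and trace constants must be absorbed, and at the same time one needs $\|\eta\|_{\infty}<\kappa$ to prevent self-intersection a priori. Once such a closed estimate is available uniformly in $\mathcal{K}$, both fixed-point arguments go through and the limit solution inherits \eqref{eqn:energy-estimate-1}--\eqref{eqn:eta=m}.
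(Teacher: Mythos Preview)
Your two-layer scheme differs architecturally from the paper's in a way that creates a genuine problem. The paper does \emph{not} separate fluid and plate: at the Galerkin level it solves the \emph{coupled} pair $(\mathbf u_n,\eta_n)$ simultaneously on a prescribed regularised geometry $\mathcal R_\varepsilon\delta$, linearising only the convection through a second datum $\mathbf v$; the Sch\"affer fixed point is taken on the return map for the joint initial data $(\mathbf b_n(0),\mathbf b_n'(0))$, and the outer Kakutani--Glicksberg--Fan fixed point acts on $(\mathbf v,\delta)\mapsto(\mathbf u,\eta)$, followed by $\varepsilon\to0$. Your plan instead solves Navier--Stokes alone with inhomogeneous boundary velocity $\partial_t\tilde\eta$, reconstructs a pressure by a Bogovskii correction, and only then feeds the traction into a separate hyperbolic plate problem. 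This forces you to extract a pressure from a Leray--Hopf solution on a domain with merely $H^2$ boundary regularity and to make sense of its normal trace as a right-hand side for the plate---issues the paper sidesteps entirely, since the normal-stress coupling never appears explicitly in the joint weak formulation.

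More seriously, your route to the key estimate has a gap. From dissipation and periodicity you correctly obtain $\partial_t\eta\in L^2(I;L^2(\omega))$; a Poincar\'e inequality in time then controls $\eta-\fint_I\eta$ in $L^\infty(I;L^2(\omega))$, but the spatial mean constraint $\int_\omega\eta\equiv m$ says nothing about $\|\fint_I\eta\|_{L^2(\omega)}$, so you do not get an $L^\infty_tL^2_{x'}$ bound on $\eta$, let alone on $K(\eta)$. The paper's mechanism is different and relies on the coupled formulation: one tests the joint weak equation with the admissible pair $\bigl(\mathcal F_\eta(\eta-m\psi),\,\eta-m\psi\bigr)$, where $\mathcal F_\eta$ is the solenoidal extension of Proposition~\ref{prop:solenoidal-extension-operator}. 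The plate contribution produces $\langle K'(\eta),\eta\rangle=2K(\eta)$ on the left, while every remaining term is controlled by the already-available $L^2_tH^1_x$ fluid bound together with $\sup_tE(t)$; this yields $2\int_0^TK(\eta)\,dt\le\tfrac12\sup_tE(t)+c(C_0+C_0^2)$, which closes against the energy identity via the mean-value theorem to give \eqref{eqn:energy-estimate-1}. That test-function trick is unavailable once fluid and plate are decoupled, and you offer no working substitute.
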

%
%\begin{theorem}
%\label{thm:main-intro}
%
%Let  $T>0$,  and $\Omega$ be given as above  and denote $I:=(0,T)$. 
%If the forces \[
%f : I \times \Omega_{\eta} \mapsto \mathbb{R}^{3} , \ g : I \times \omega \mapsto \mathbb{R}
%\] acting on the fluid and on the shell respectively are  time-periodic of period $T$ and moreover 
%\[
%\int_{0}^{T}\int_{\Omega_{\eta\left(t\right)}}\left|f\right|^{2}dx+\int_{\omega}g^{2}dx'dt<\infty.\]
%Then there exists a constant $\tilde{C}=\tilde{C}(c_0, \Omega,  \kappa)$ such that if  \[\fint_{0}^{T}\int_{\Omega_{\eta\left(t\right)}}\left|f\right|^{2}dxdt+\fint_{0}^{T}\int_{\omega}g^{2}dx'dt \le \tilde{C},\]
%then there exists a time-periodic weak solution to \eqref{eq:system} in the sense of Subsection~\ref{ssec:defweaksol}.
%%the fluid flow $u$,  the  displacement $\eta$ and the speed $\partial_{t} \eta$ of the shell  are also time-periodic with the same period $T$ .
%Furthermore,  the energy of the system is bounded by $\tilde{C}$.
%\end{theorem}
\begin{remark}[The 2D/1D case]
Note that our proof can be adapted to the 2D coupling without of the fluid with a 1D coupling of the shell without any substantial change.
\end{remark}
\begin{remark}[On the constraints in Theorem~\ref{thm:main}]
\label{rmk:further-explanations-thickness-shell}
One critical and non-trivial fact about fluid-structure interactions (in general) is that the solution prescribes the geometry of the fluid domain.  Generally,  weak solutions can be constructed up to the point of self-intersection of the solid. The key obstacle to produce periodic solutions (where the initial condition is the unknown!) is to exclude self-intersection over the {\em full time-interval} a-priori. It is only this necessity that determines the size of the constraint $C_0$. Certainly the theorem hence provides the existence of periodic solutions under the assumption that the applied forces are small. However, since it is the deformation that restricts the geometric flexibility {\em and no other part of the solution construction} it is also possible to assume that the elastic resistance of the shell is large enough to withstand the given forces. Actually, as can be checked, the assumption on the constant $C_0$ are strongly connected to the $H^2$-coercivity of the shell $c_0$. This coercivity again is connected to the thickness of the elastic plate and respectively its resistance against deformations. Hence one consequence of the theorem above shows that periodic weak-solutions for arbitrary large periodic forces (and arbitrary large time-intervals) can be constructed as long as the structure is "thick" enough.  For some applications these assumptions seem to be appropriate. Indeed, the stronger the periodic forces (stemming from a pump for instance) are acting on the structure, the more resistant the structure (the pipe) should be. 
\end{remark}

\begin{remark}[On the assumed boundary values]
With regard of the importance of applications like pipelines or blood-vessels inflow/outflow boundary conditions are very natural to be considered. In that regard observe that the here provided existence theory can be addapted to periodic-in-space boundary conditions on some part of the domain without substantial effort (as was considered by many authors~\cite{Gr05,Canic20}). More general inflow and outflow conditions will be the subject to a follow-up-work of this paper.
\end{remark}

\begin{remark}[Key novelty]
Please observe that the a-priori estimate provided  in \eqref{eqn:energy-estimate-1} is not a trivial consequence of the energy estimate \eqref{eqn:energy-estimate-2}. Actually, we consider the estimate on the deformation to be the main technical contribution of the present paper. Formally this non-trivial estimate is explained in Subsection~\ref{ssec:a-priori estimates}.
\end{remark}

\subsection{Outline of the paper}
Our paper is structured as follows: in Section~\ref{sec:a-priori estimates} we introduce the periodic spaces and the concept of weak solutions for which we will show the existence. Before we motivate both by justifying formally how to obtain the necessary a-priori estimates which are \emph{independent} of the initial data.% that allow to speculate for the existence of weak solutions. 
In Section~\ref{sec:proof-of-main-result} we prove our main result.%: first, we decouple the system (we prescribe a given, variabile in time, geometry) and we also linearize(and hence make convex) the Koiter energy of the shell $K(\eta)$.  We also  $\varepsilon$-regularize the geometry of the domain (by mollification esentially) since our analysis-tools require domains which are  at least $C^{3} $-regular. 
We perform two fixed-points: One at the discrete (Galerkin) level, employing Sch\"{a}ffer's fixed point Theorem, which ensures the periodicity of $ \mathbf{u}_n$,  $\eta_{n}$,$ \partial_{t} \eta_{n}$ for a given geometry. At this level also the critical a-priori estimates will be derived that eventually will withstand all further analytical manipulations. The second fixed-point is performed at the continuous level for a regularized set-up w.r.t.\ the geometry. Here we employ the set-valued fixed point Theorem~\ref{thm: Kakutain} due to Kakutani-Glicksberg-Fan (since the uniqueness of solutions can not be guaranteed). Finally, the last step consists in letting $\varepsilon \to 0$ which provides our main result, Theorem~\ref{thm:main}. 
The paper ends with an Appenix--Section~\ref{section:appendix}--where we collect some known results, essentially needed in the proof.
\section{Preliminary}
 %\section{Formal a-priori estimates} 
 \label{sec:a-priori estimates}
 \subsection{Formal a-priori estimates} \label{ssec:a-priori estimates} 
 
 Assume during this subsection that all the involved functions  are smooth.
 We will obtain suitable a-priori estimates needed to prove our main result--Theorem~\ref{thm:main}.
 Let us first use the test function $(\mathbf{u},\eta)$. That is we  take the scalar product with $\mathbf{u}$  and $\partial_{t}\eta$ in \eqref{eqn:Navier} and in \eqref{eqn:Koiter} respectively. We add the resulting equations and after  recalling  the coupling $\mathbf{u} = \partial_{t} \eta \mathbf{e}_{3}$ on $\partial \Omega _{\eta}$ and the definition of $\mathbf{u}$ we get
\begin{equation}\label{eqn:formal-energy}
\frac{d}{dt}E\left(t\right)+\int_{\Omega_{\eta\left(t\right)}}\left|\nabla\mathbf{u}\right|^{2}dx=\int_{\Omega_{\eta\left(t\right)}}\mathbf{f}\cdot\mathbf{u}dx+\int_{\omega}g\partial_{t}\eta dx'\text{ for all }t\in\left[0,T\right]
\end{equation}
where we have denoted
 \[
E\left(t\right):=\frac{1}{2}\int_{\Omega_{\eta\left(t\right)}}\left|\mathbf{u}\left(t,\cdot\right)\right|^{2}dx+\frac{1}{2}\int_{\omega}\left|\partial_{t}\eta\left(t,\cdot\right)\right|^{2}dx'+K\left(\eta\left(t\cdot\right)\right),\ t\in\left[0,T\right].
\]
Now since $\mathbf{u}$ vanishes on $\Gamma \subset \partial \Omega _{\eta(t)}$ and  $\mathbf{u}=\partial_{t} \eta \cdot \mathbf{e}_{3}$ on $\partial \Omega_{\eta(t)}$, by using Poincare's inequality (see Theorem~\ref{thm:poincare})  and recalling the properties of the trace operator from Lemma~\ref{lm:trace} we obtain
\begin{equation}\label{eqn:formal1}
\int_{0}^{T}\int_{\Omega_{\eta\left(t\right)}}\left|\mathbf{u}\right|^{2}dxdt+\int_{\omega}\left|\partial_{t}\eta\right|^{2}dx'dt\le c (\Omega, \kappa)\int_{0}^{T}\int_{\Omega_{\eta\left(t\right)}}\left|\nabla \mathbf{u}\right|^{2}dxdt.
\end{equation}

On the other hand, using the periodicity of $\mathbf{u}, \eta, \partial_{t} \eta$ we also  have $E(0)=E(T)$ and thus
\begin{equation}\label{eqn:formal2}
\int_{0}^{T}\int_{\Omega_{\eta\left(t\right)}}\left|\nabla \mathbf{u}\right|^{2}dxdt=\int_{0}^{T}\int_{\Omega_{\eta\left(t\right)}}\mathbf{f}\cdot \mathbf{u}dx+\int_{\omega}g\partial_{t}\eta dx'dt.
\end{equation}
From \eqref{eqn:formal1} and \eqref{eqn:formal2} we obtain
\begin{equation}\label{eqn:formal3}
\int_{0}^{T}\int_{\Omega_{\eta\left(t\right)}}\left|\mathbf{u}\right|^{2}+\left|\nabla\mathbf{u}\right|^{2}dxdt+\int_{\omega}\left|\partial_{t}\eta\right|^{2}dx'dt\le cC\left(\mathbf{f},g\right).
\end{equation}

where $C(\mathbf{f},g)$ will always denote 
\begin{equation}\label{eqn:C(f,g)}
C\left(\mathbf{f},g\right):=\int_{0}^{T}\int_{\mathbb{R}^{3}}\left|\mathbf{f}\right|^{2}dxdt+\int_{0}^{T}\int_{\omega}\left|g\right|^{2}dx'dt\leq C_{0}-m^{2}.
\end{equation}

It is important to remark that we will obtain the periodic solutions as a fixed point of the mapping
 \[\left(\mathbf{u}_{0},\eta_{0},\eta_{1}\right)\mapsto\left(\mathbf{u}\left(T,\cdot\right),\eta\left(T,\cdot\right),\partial_{t}\eta\left(T,\cdot\right)\right)
 \] where $\left(u_{0},\eta_{0},\eta_{1}\right)$ is a set of initial values.\footnote{This approach is by now standard--see for example \cite{Pr63,GK18, Ab20} and the references therein.}
  This is why  we need estimates of $E(T)$ which are \emph{independent } on the  initial value $E(0)$.  
  
From the mean value theorem there is $t_0 \in [0,T]$ for which
\[E\left(t_{0}\right)=\frac{1}{T}\int_{0}^{T}E\left(t\right)dt=:\fint_{0}^{T}E\left(t\right)dt\]
and so by integrating the equation~\eqref{eqn:formal-energy} on $[t_0, t]$ and taking the supremum over $t$ we get that 
\begin{equation}\label{eqn:formal-K-prime}
\sup_{0\le t\le T}E\left(t\right)\le\fint_{0}^{T}E\left(t\right)dt+cC\left(\mathbf{f},g\right)\le cC\left(\mathbf{f},g\right)+\fint_{0}^{T}K\left(\eta\right)dt
\end{equation}
In order to estimate the last term we would like to test the shell equation with 
 $\eta$. In order to do that we seek a corresponding extension $\mathcal{E}(\eta)$ for the fluid equation such that $\text{div} \mathcal{E}(\eta)=0$ and $\text{tr} \mathcal{E}(\eta)\circ\phi_\eta=\eta \mathbf{e}_{3} $.  
Finding such a function $\mathcal{E}(\eta)$ is not immediate. But in \cite[Proposition 3.3]{MS19}, such a construction was performed, along with corresponding  estimates as long as the compatibility conditions with the solenoidality is satisfied.  We include the properties of the operator $\mathcal{E}(\eta)$ in Proposition~\ref{prop:solenoidal-extension-operator}, that can be found in the appendix. Roughly speaking   $\mathcal{E}(\eta) \sim \eta$ 
in  the sense that obeys the same estimates up to a constant depending on $\kappa, c_0, \Omega$. As mentioned the compatibility condition with the solenoidality of the extension has to be satisfied. 
For that we fix a smooth bump function
\begin{equation}\label{eqn:psi-bump-function-test-eta}
 \psi \in C_{0}^{\infty}(\omega;[0,1]) \  \text{with} \int_{\omega} \psi dx'=1 
\end{equation}
 
 and by recalling the definition of $m$ from Remark~\ref{rmk:int-eta-constant-time} we find  that the following couple 
\[
\left(\mathcal{F}_{\eta}\left(\mathcal{M}_{\eta}\left(\eta\right)\right),\mathcal{M}_{\eta}\left(\eta\right)\right)=\left(\mathcal{F_{\eta}}\left(\eta-m\psi\right),\eta-m\psi\right)
\]
is a test function for the weak formulation given in Definition~\ref{def:weak-periodic-solution}.
As we show below, this test-function does provide the following estimate 
\begin{equation}\label{eqn:formal-claim}
2\int_{0}^{T}K\left(\eta\right)dt\le\frac{1}{2}\sup_{0\le t\le T}E\left(t\right)+c\left(C_0+C_0^2\right),
\end{equation}
which implies the desired estimate by absorbing $\frac{1}{2}\sup_{0\le t\le T}E\left(t\right)$ to the left-hand side of \eqref{eqn:formal-K-prime}.

As mentioned above, the estimate \eqref{eqn:formal-claim} follows by using the extension of $\eta$ as a test-function in Definition~\ref{def:weak-periodic-solution}. We then find
\begin{align*}
2\int_{0}^{T}K\left(\eta\right)dt&=\int_{0}^{T}\left\langle K^{\prime}\left(\eta\right),\eta\right\rangle dt\le \int_{0}^{T}\left\langle K^{\prime}\left(\eta\right),\eta-\mathcal{M}_{\eta}\left(\eta\right)\right\rangle dt
+\int_{0}^{T}\int_{\omega}\partial_{t}\eta\partial_{t}\left(\mathcal{M}_{\eta}\left(\eta\right)\right)dx'dt
\\
&\quad +
  \int_{0}^{T}\int_{\Omega_{\eta\left(t\right)}}\mathbf{u}\cdot\partial_{t}\mathcal{F}_{\eta}\left(\mathcal{M}_{\eta}\left(\eta\right)\right)+\left(\nabla \mathbf{u}+\mathbf{u}\otimes \mathbf{u}\right):\nabla\mathcal{F}_{\eta}\left(\mathcal{M}\left(\eta\right)\right)dxdt
  \\
&\quad + \int_{0}^{T}\int_{\Omega_{\eta\left(t\right)}}\mathbf{f}\cdot\mathcal{F}_{\eta}\left(\mathcal{M}\left(\eta\right)\right)dxdt+\int_{0}^{T}\int_{\omega}g\left(\mathcal{M}_{\eta}\left(\eta\right)\right)dx'dt
 =:\sum_{k=1}^{7}I_{k}.
\end{align*}
Recalling now the properties of the divergence-free extension operator $\mathcal{F}_{\eta}$ from Proposition~\ref{prop:solenoidal-extension-operator}
we employ  H\"{o}lder's inequalilty,  the Sobolev embedding, the $H^2$-coercivity of $K$~\eqref{eqn: coercivity-K} and the estimate~\eqref{eqn:C(f,g)} to get 
\begin{align*}
I_{1} & \le m\left\Vert K^{\prime}\left(\eta\right)\right\Vert _{L_{t}^{2}L_{x}^{2}}\\
I_{2} & \le\left\Vert \partial_{t}\eta\right\Vert _{L_{t}^{2}L_{x}^{2}}^{2}\le cC_0\\
I_{3}+I_{4}+I_{5} & \le\left(\left\Vert \mathbf{u}\right\Vert _{L_{t}^{2}W_{x}^{1,2}}+\left\Vert \mathbf{u}\right\Vert _{L_{t}^{2}W_{x}^{1,2}}^{2}\right)\left\Vert \mathcal{F}_{\eta}\left(\mathcal{M}_{\eta}\right)\right\Vert _{W_{t}^{1,2}L_{x}^{2}\cap L_{t}^{\infty}W_{x}^{1,2}}
\\
 & \le c(\sqrt{C_0}+C_0)\left\Vert \mathcal{M}_{\eta}\right\Vert _{W_{t}^{1,\infty}L_{x}^{2}\cap L_{t}^{\infty}W_{x}^{2,2}}
 \\
 & \le c(\sqrt{C_0}+C_0)\sup_{t}\sqrt{E\left(t\right)}+c(C_0^2
+C_0)
 \\
I_{6}+I_{7} & \le\left(\left\Vert \mathbf{f}\right\Vert _{L_{t}^{2}L_{x}^{2}}+\left\Vert g\right\Vert _{L_{t}^{2}L_{x}^{2}}\right)\left(\left\Vert \mathcal{F}_{\eta}\left(\mathcal{M}_{\eta}\right)\right\Vert _{L_{t}^{2}L_{x}^{2}}+\left\Vert \mathcal{M}_{\eta}\right\Vert _{L_{t}^{2}L_{x}^{2}}\right)\\
 & \le c\sqrt{C_0}\sup_{t}\sqrt{E\left(t\right)} +c C_0
\end{align*}
All in all,  using Young's inequality we conclude that 
\[
\sum_{k=1}^{7}I_{k}\le \frac{1}{2}\sup_{t}E\left(t\right)+c(C_0+C_0^2).
\]
Hence we get \eqref{eqn:formal-claim} which together with \eqref{eqn:formal-K-prime} implies
 \begin{equation}\label{eqn:formal-final}
\sup_{0\le t\le T}E\left(t\right)\le c(C_0+C_0^2).
\end{equation}

\subsection{Spaces of time-periodic functions}
In the spirit of the  formal estimates from above it is natural to consier the following function spaces. 
 For  $T>0$ and a general (real) Banach space $X$ we define \[C_{\text{per}}^{\infty}\left(0,T;X\right):=\left\{ \varphi:\mathbb{R}\to X:\ \varphi\text{\ is}\ C^{\infty}\ \text{and }\varphi\left(t+T, \cdot\right)=\varphi\left(t,\cdot\right)\right\} \] and then one obtains for $k \in \mathbb{Z}$ and $1\le p\le \infty$  the usual periodic spaces \[L^{p}_{\text{per}}\left(0,T;X\right):=\overline{\left\{ f\in C_{\text{per}}^{\infty}\left(0,T;X\right)\right\} }^{\left\Vert \cdot\right\Vert _{p}}\quad W^{k,p}_{\text{per}}\left(0,T;X\right):=\overline{\left\{ f\in C_{\text{per}}^{\infty}\left(0,T;X\right)\right\} }^{\left\Vert \cdot\right\Vert _{k,p}}.\]
 Further we consider $C_{w,\text{per}}(I,X)$ as the periodic weakly continuous space.\footnote{We say that $x\in C_{w,\text{per}}(I,X)$ if for all $x^*\in C_{\text{per}}(\overline{I},X^*)$, $t\mapsto\langle x(t), x^*(t)\rangle$ is continuous in $\overline{I}$; this concept can naturally be extended to time-changing domains.}
 We denote  $I:=(0,T)$ the time interval, where $T$ is the time-period.
 
  In the following we will use spaces that change in time. Such spaces have been investigated carefully in the previous literature (see for instance~\cite{PhD11}) and hence the concept of time-periodic spaces can be extended in a straight forward manner. 
We have:
\begin{equation}\label{eqn:def-function-spaces}
\begin{aligned}V_{\eta}\left(t\right):= & \left\{ \mathbf{u}\in H^{1}\left(\Omega_{\eta\left(t\right)}\right)
:\text{div}\mathbf{u}=0 \text{ in } \Omega_{\eta(t)},\,\mathbf{u}=0 \text{ on } \Gamma \right\}
\\
V_{F,\text{per}}:= & \left\{ \mathbf{u}\in C_{w,\text{per}}\left(I;L^{2}\left(\Omega_{\eta}\right)\right)\cap L_{\text{per}}^{2}\left(I;V_{\eta}\left(t\right)\right)\right\} 
\\
V_{K,\text{per}}:= & C_{w,\text{per}}\left(\mathbb{R};H_{0}^{2}\left(\omega\right)\right)\cap W_{\text{per}}^{1,\infty}\left(\mathbb{R};L^{2}\left(\omega\right)\right)
\\
V_{S,\text{per}}:= & \left\{ \left(\mathbf{u},\eta\right)\in V_{F,\text{per}}\times V_{K,\text{per}};\mathbf{u}\left(t,\phi_{\eta}\left(t,\cdot\right)\right)=\partial_{t}\eta\left(t,\cdot\right)\mathbf{e}_{3}\left(\eta\left(t,\cdot\right)\right) \text{ in } \omega\right\} 
\\
V_{T,\text{per}}:= & \left\{ \left(\mathbf{q},\xi\right)\in 
C^1_{\text{per}}(I;V_\eta(t))\times V_{K,\text{per}}:\left(I\times\Omega_{\eta}\right),\mathbf{q}\left(t,\phi_{\eta}\left(t,\cdot\right)\right)=\xi\left(t,\cdot\right)\mathbf{e}_{3}\left(\eta\left(t,\cdot\right)\right)\right\}.
\end{aligned}
\end{equation}
Note that $V_{S,\text{per}}$ is the space of time-periodic solutions and $V_{T,\text{per}}$ is the space of time-periodic test-functions.  %Since we assume that $\eta\in H^2_0(\omega)$ we associate with that space the  trivial extension by zero to $\partial\Omega$ whenever necessary. 
In particular $(\mathbf{u},\eta)\in V_{S,\text{per}}$ implies that $\mathbf{u}\equiv 0$ over $(0,T)\times \Gamma$ as well as $\left(\mathbf{q},\xi\right)\in V_{T,\text{per}}$ implies that $\mathbf{q}\equiv 0$ over $(0,T)\times \Gamma$.

{Further, if the geometry is {\em decoupled} from the solid deformation we shall use the analogous notations $V_{T,\text{per}}^{\delta},V_{S,\text{per}}^{\delta}$  whenever we deal with functions $(u, \eta)$ for which 
\[
\mathbf{u}\left(t,\phi_{\delta(t)}\right)=\partial_{t}\eta\mathbf{e}_{3}
\] and $\delta\in C\left(I\times\omega\right),\ \left\Vert \delta\right\Vert _{L^{\infty}\left(I\times\omega\right)}\leq \kappa$, a here given function defining the time-changing domain.}
\begin{remark} \label{remark: Holder-cont}
We have $V_{K,\text{per}}\hookrightarrow C^{0,1-\theta}\left(\overline{I};C^{0,2\theta-1}\left(\omega\right)\right)$, and therefore  the displacement of the shell is H\"{o}lder continuous in time.
 Indeed, using the embeddings $H^{2}\left(\omega\right)\hookrightarrow H^{2\theta}\left(\omega\right)\hookrightarrow C^{0,2\theta-1}\left(\omega\right)$ for $\theta \in (1/2,1)$  (since $\omega\in \mathbb{R}^2$). Consequently we find
  \begin{align*}
\left\Vert \eta\left(t\right)-\eta\left(s\right)\right\Vert _{C^{0,2\theta-1}\left(\omega\right)}	&\le c\norm{\eta\left(t\right)-\eta\left(s\right)}_{H^{2\theta}(\omega)}
\leq c\left\Vert \eta\left(t\right)-\eta\left(s\right)\right\Vert _{H^{2}\left(\omega\right)}^{\theta}\left\Vert \eta\left(t\right)-\eta\left(s\right)\right\Vert _{L^{2}\left(\omega\right)}^{1-\theta}
\\
	&\le c\left\Vert \eta\right\Vert _{L^{\infty}\left(I;H^{2}\left(\omega\right)\right)}^{\theta}\left\Vert \eta\right\Vert _{W^{1,\infty}\left(I;L^{2}\left(\omega\right)\right)}^{1-\theta}\left|t-s\right|^{1-\theta} .
\end{align*} In particular, the condition $\eta(0,\cdot)=\eta(T,\cdot)$   holds in a strong sense, in contrast with   the  conditions  $\mathbf{u}(0,\cdot)=\mathbf{u}(T,\cdot)$ and $\partial_{t} \eta(0,\cdot)=\partial_{t} \eta (T,\cdot)$ which  can only occur in a weak  sense (continuous in time  and taking values in   suitable  functions spaces endowed with a weak-topology), see Definition~\ref{def:weak-periodic-solution}.
\end{remark}
\subsection{The definition of weak time-periodic solutions}
\label{ssec:defweaksol}
Based on the a-priori estimates we can introduce the following  notion of \emph{periodic solution}. Let $(q, \xi ) \in V_{T,\text{per}} ^{\eta}$ be a test function which is smooth in time and space. 
%We integrate the system and integrate on $(0,T) \times \Omega_{\eta(t)}$. 
First we notice that \begin{align*}
\int_{0}^{T}\int_{\Omega_{\eta\left(t\right)}}\partial_{t}\mathbf{u}\cdot\mathbf{q}dxdt= & \int_{0}^{T}\left(\frac{d}{dt}\int_{\Omega_{\eta\left(t\right)}}\mathbf{u}\cdot\mathbf{q}dx-\int_{\Omega_{\eta\left(t\right)}}\mathbf{u}\cdot\partial_{t}\mathbf{q}dx-\int_{\omega}\left(\partial_{t}\eta\right)^{2}\xi dx'\right)dt\\
= & -\int_{0}^{T}\int_{\Omega_{\eta\left(t\right)}}\mathbf{u}\cdot\partial_{t}\mathbf{q}dxdt-\int_{0}^{T}\int_{\omega}\left(\partial_{t}\eta\right)^{2}\xi dx'dt
\end{align*} where we have  used Reynolds' transport Theorem~\ref{thm:Reynolds}  and the fact that $\text{tr}_{\eta} \mathbf{u}\circ \phi_\eta=\partial_t\eta \mathbf{e}_{3}$ and also $\text{tr}_{\eta} \mathbf{q}\circ \phi_\eta= \xi \mathbf{e}_3$. On the other hand if we look at the convective term we see that
\begin{align*}\int_{0}^{T}\int_{\Omega_{\eta\left(t\right)}}\left(\mathbf{u}\cdot\nabla\right)\mathbf{u}\cdot\mathbf{q}dxdt= & \sum_{i=1}^{3}\int_{0}^{T}\int_{\Omega_{\eta\left(t\right)}}\partial_{i}\left(\mathbf{u}^{i}\mathbf{u}^{j}\right)\mathbf{q}^{j}-\partial_{i}\mathbf{u}^{i}\mathbf{u}^{j}\mathbf{q}^{j}dxdt\\
= & \sum_{i=1}^{3}\int_{0}^{T}\int_{\partial\Omega_{\eta\left(t\right)}}\mathbf{u}^{i}\mathbf{u}^{j}\mathbf{q}^{j}\nu^{i}dx'dt-\int_{0}^{T}\int_{\Omega_{\eta\left(t\right)}}\left(\mathbf{u}^{i}\mathbf{u}^{j}\right)\partial_{i}\mathbf{q}^{j}dxdt\\
= & \int_{0}^{T}\int_{\omega}\left(\partial_{t}\eta\right)^{2}\xi dx'dt-\int_{0}^{T}\int_{\Omega_{\eta\left(t\right)}}\left(\mathbf{u}\cdot\nabla\right)\mathbf{q}\cdot\mathbf{u}dxdt
\end{align*}
where we have used the coupling of $\mathbf{u}$ and $\partial_{t}\eta$ , the  $\text{div} \mathbf{u}=0$  condition and Stokes' Theorem. We also make use of the Korn's identity  from Lemma~\ref{lm:Korn} and get
 \[
\int_{\Omega_{\eta\left(t\right)}}\nabla \mathbf{u}:\nabla \mathbf{q}dx=2\int_{\Omega_{\eta\left(t\right)}}D\left(\mathbf{u}\right):D\left(\mathbf{q}\right)dx.\]
All in all we obtain the following definition.
\begin{definition}\label{def:weak-periodic-solution} A couple $(\mathbf{u},\eta) \in V_{S, \text{per}}$ with $\left\Vert \eta\right\Vert _{L^{\infty}\left(I\times\omega\right)}<\kappa$ is a \emph{ weak time-periodic solution} to \eqref{eq:system} if 

\begin{equation}\label{eq: weak-periodic-solution}
\begin{aligned}\int_{0}^{T}\int_{\Omega_{\eta\left(t\right)}}-\mathbf{u}\cdot\partial_{t}\mathbf{q}+\nabla\mathbf{u}:\nabla\mathbf{q}-\left(\mathbf{u}\cdot\nabla\right)\mathbf{q}\cdot\mathbf{u}dxdt+\int_{0}^{T}\left\langle K^{\prime}\left(\eta\right),\xi\right\rangle -\int_{\omega}\partial_{t}\eta\partial_{t}\xi dx'dt & =\\
\int_{0}^{T}\int_{\Omega_{\eta\left(t\right)}}\mathbf{f}\cdot\mathbf{q}dxdt+\int_{\omega}g\xi dx'dt\quad\forall\left(\mathbf{q},\xi\right)\in V_{T,\text{per}}^{\eta}.
\end{aligned}
\end{equation}
\end{definition}

\section{Proof of the main result}\label{sec:proof-of-main-result}

 \subsection{The decoupled system}
 \label{subsection:decoupled}
In the following we assume that  $\delta\in C\left(I\times\omega\right),\ \left\Vert \delta\right\Vert _{L^{\infty}\left(I\times\omega\right)}<\kappa$ is prescribing a periodic time-changing domain. This means in particular that $\delta(0,\cdot)=\delta(T, \cdot)$ and $\Omega_{\delta(t)}=\phi_{\delta(t)}(\Omega)$. 
Next we will regularize the space variables of the prescribed geometry $\delta$,  and therefore replace $\delta$ by $\mathcal{R}_{\varepsilon} \delta$, where $\varepsilon$ is the mollification parameter.  
We extend $\delta$ to $(-\infty, T] \times \omega$ by $\delta (t, \cdot)=\delta(0,\cdot)$ for $ t<0$. 

\begin{lemma}[\cite{LR14},\cite{BS18}]
\label{lm:regularizers}
There exists an operator \[\mathcal{R}_{\varepsilon}:C\left(\left[0,T\right]\times\omega\right)\mapsto C^{4}\left(\left[0,T\right]\times\omega\right)\] such that, %for all such $\delta\in C\left(\left[0,T\right]\times\omega)$ it holds:
\begin{enumerate}[(a)]
\item $\mathcal{R}_{\varepsilon}\delta\to\delta$ uniformly as $\varepsilon \to 0$;
\item $\mathcal{R}_{\varepsilon}:L^{2}\left(0,T;H_{0}^{2}\left(\omega\right)\right)\mapsto L^{2}\left(0,T;H_{0}^{2}\left(\omega\right)\right)$ and $\mathcal{R}_{\varepsilon}\delta\to\delta$ in $L^{2}\left(0,T;H_{0}^{2}\left(\omega\right)\right)$ as $\varepsilon \to 0$;
\item If $\partial_{t}\delta\in L^{p}\left(\left(0,T\right)\times\omega\right)$ then $\partial_{t}\mathcal{R}_{\varepsilon}\delta=\mathcal{R}_{\varepsilon}\partial_{t}\delta\to\partial_{t}\delta$ in $L^{p}\left(\left(0,T\right)\times\omega\right)$ as $\varepsilon \to 0$;
\item If $\delta\in C^{\gamma}\left(\left(0,T\right)\times\omega\right)$ for some $\gamma \in (0,1)$ then $\mathcal{R}_{\varepsilon}\delta\to\delta$ in $C^{\gamma}\left(\left(0,T\right)\times\omega\right)$ as $\varepsilon \to 0$;
\item $\left\Vert \mathcal{R}_{\varepsilon}\delta\right\Vert _{L^{\infty}\left(\left(0,T\right)\times\omega\right)}\le\left\Vert \delta\right\Vert _{L^{\infty}\left(\left(0,T\right)\times\omega\right)}$.
\end{enumerate}
\end{lemma}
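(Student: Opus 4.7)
The operator $\mathcal{R}_\varepsilon$ is a classical space-time mollification adapted to the Lipschitz domain $\omega$; the full construction is given in the cited references~\cite{LR14,BS18}, so I restrict myself to sketching the main ideas and indicating where the main work lies.

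The starting point is a standard space-time mollifier. Let $\rho^t\in C_c^\infty(\mathbb{R})$ and $\rho^x\in C_c^\infty(\mathbb{R}^2)$ be nonnegative and of unit mass, and set $\rho^t_\varepsilon(\tau):=\varepsilon^{-1}\rho^t(\tau/\varepsilon)$, $\rho^x_\varepsilon(\xi):=\varepsilon^{-2}\rho^x(\xi/\varepsilon)$. After extending $\delta$ in time by $\delta(t,\cdot):=\delta(0,\cdot)$ for $t<0$ as stated, extend $\delta(t,\cdot)$ in space from $\omega$ to $\mathbb{R}^2$ by a bounded linear extension $\Ecal_\omega$ that respects continuity and $H^k$-regularity (a Stein-type extension suffices since $\omega$ is Lipschitz). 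The candidate $\rho^t_\varepsilon*_t\rho^x_\varepsilon*_x\Ecal_\omega\delta$, restricted to $[0,T]\times\omega$, already yields properties (a), (c), (d), (e) and a weakened form of (b) with $H^2$ in place of $H^2_0$. To upgrade to $H^2_0$, I combine the mollification with an inward shrinking: one chooses a one-parameter family of smooth maps $S_\varepsilon:\overline\omega\to\overline{\omega_{2\varepsilon}}$ (for instance the time-$2\varepsilon$ flow of a smooth inward-pointing vector field defined on a collar of $\partial\omega$) where $\omega_s:=\{x\in\omega:\operatorname{dist}(x,\partial\omega)>s\}$, with $S_\varepsilon\to\id$ in $C^k$ on compacta as $\varepsilon\to 0$, and pre-composes with the pull-back by $S_\varepsilon^{-1}$ extended by zero outside $\omega_{2\varepsilon}$. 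The resulting output is supported inside $\omega_\varepsilon\Subset\omega$, so it lies slicewise in $C^\infty_c(\omega)\subset H^2_0(\omega)$, and it is $C^\infty$ (in particular $C^4$) in $(t,x)$.

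The easy properties follow routinely. Property (e) comes from the $L^\infty$-contractivity of convolutions with nonnegative unit-mass kernels and of compositions with interior maps. Property (c) is immediate because the spatial operations do not involve $t$ and temporal mollification commutes with $\partial_t$, while strong continuity of translations yields $L^p$-convergence. Properties (a) and (d) follow by decomposing the error into a mollification error and a pull-back/boundary-layer error, each controlled on the compact set $\overline I\times\overline\omega$ by uniform continuity (respectively the H\"older modulus) of $\delta$ together with $S_\varepsilon\to\id$ uniformly.

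The principal obstacle is property (b), the convergence in $L^2(0,T;H^2_0(\omega))$. The landing in $H^2_0$ is by construction because of the compact support in $\omega$. For the convergence I split
\[
\mathcal{R}_\varepsilon\delta-\delta=\bigl(\rho^t_\varepsilon*_t\rho^x_\varepsilon*_x\widetilde\delta-\widetilde\delta\bigr)+\bigl(\widetilde\delta-\delta\bigr),\qquad\widetilde\delta:=(\delta\circ S_\varepsilon^{-1})\ONE_{\omega_{2\varepsilon}},
\]
and control each term in $L^2(0,T;H^2(\omega))$. The first vanishes by strong continuity of translations in $L^2(\mathbb{R};H^2(\mathbb{R}^2))$ together with density of smooth functions. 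The second is the delicate piece: it follows from the strong continuity of the pull-back $f\mapsto f\circ S_\varepsilon^{-1}$ on $H^2_0(\omega)$, which reduces via the chain rule to $S_\varepsilon\to\id$ in $C^2$, combined with dominated convergence in $t$. The key subtlety is that the zero-extension of $\delta\circ S_\varepsilon^{-1}$ across $\partial\omega_{2\varepsilon}$ must preserve $H^2$-regularity, which is ensured because $\delta\in H^2_0(\omega)$ extends by zero to $H^2(\mathbb{R}^2)$ and $S_\varepsilon^{-1}$ maps neighborhoods of $\partial\omega_{2\varepsilon}$ into neighborhoods of $\partial\omega$.
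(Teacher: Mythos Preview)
The paper does not prove this lemma; it is simply quoted from~\cite{LR14,BS18} with no argument whatsoever, so there is no in-paper proof to compare your sketch against. Your outline is in the spirit of the cited constructions (mollification combined with a boundary correction), and the verification of (c), (d), (e) along the lines you indicate is routine.

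Two small points worth tightening. First, your claim that the \emph{unshrunk} candidate $\rho^t_\varepsilon*_t\rho^x_\varepsilon*_x\Ecal_\omega\delta$ already satisfies (e) is not quite right: a generic Stein extension $\Ecal_\omega$ does not preserve the $L^\infty$ bound, so the spatial convolution could pick up values larger than $\|\delta\|_{L^\infty(\omega)}$. It is only after the shrink-then-zero-extend step (so that the convolution never sees the extension) that (e) holds; you might simply drop the intermediate claim. Second, your argument for (a) tacitly uses that $\delta$ vanishes on $\partial\omega$: with the shrink-and-zero-extend construction, $\widetilde\delta$ vanishes on $\omega\setminus\omega_{2\varepsilon}$, so $\|\widetilde\delta-\delta\|_{L^\infty}\geq \|\delta\|_{L^\infty(\omega\setminus\omega_{2\varepsilon})}$, which does not tend to zero unless $\delta|_{\partial\omega}=0$. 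In the paper's setting $\delta$ is always a shell displacement with $\delta=0$ on $\partial\omega$, so this is harmless, but it should be stated as a hypothesis if you are writing out a stand-alone proof.
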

We will also regularize functions $\mathbf{v} \in L^{2} ((0,T) \times \mathbb{R}^{3})$ by convolution with standard mollifying kernels. Thus, if $\psi_\varepsilon$ denote the standard time-space mollification kernels, we set \[\mathcal{R}_{\varepsilon}v\left(t,x\right):=\int_{\mathbb{R}^{3+1}}\psi_{k}\left(t-s,x-y\right)\chi_{\left(0,T\right)\times\Omega_{\mathcal{R}_{\varepsilon}\delta}}\left(s,y\right)\mathbf{v}\left(s,y\right)dsdy.\]
Observe, that the convective term can be linearized in such a way that the energy estimates are preserved via Reynolds' Transport Theorem. Indeed, observing that
  \[
\int_{\Omega_{\delta\left(t\right)}}\left(\mathbf{u}\cdot\nabla\right)\mathbf{u}\cdot\mathbf{q}dx+\int_{\Omega_{\delta\left(t\right)}}\left(\mathbf{u}\cdot\nabla\right)\mathbf{q}\cdot\mathbf{u}dx=\int_{\omega}\left(\partial_{t}\eta\right)^{2}\xi dx'
  \]
 we will rewrite (in order to linearize) the convective term as follows:
\begin{equation}
\begin{aligned}\int_{0}^{T}\int_{\Omega_{\delta\left(t\right)}}\left(\mathbf{u}\cdot\nabla\right)\mathbf{q}\cdot\mathbf{u}dxdt= & \frac{1}{2}\int_{0}^{T}\int_{\Omega_{\delta\left(t\right)}}\left(\mathbf{u}\cdot\nabla\right)\mathbf{q}\cdot\mathbf{u}dx-\frac{1}{2}\int_{\Omega_{\delta\left(t\right)}}\left(\mathbf{u}\cdot\nabla\right)\mathbf{u}\cdot\mathbf{q}dx\\
 & +\frac{1}{2}\int_{\omega}\left(\partial_{t}\eta\right)^{2}\xi dx'dt.
\end{aligned}
\end{equation}

%Conclusively, in the following we seek a periodic solution to the following decoupled linear PDE
%\begin{definition} 
%We call a 
%$
%(\mathbf{u},\eta) \in  V_{S, \text{per}}^{\delta}
%$% \quad \left\Vert \eta\right\Vert _{L^{\infty}\left(I\times\omega\right)}\leq \kappa
%a weak time-periodic solution to the regularized and decoupled linear problem, if it satisfies
%\begin{align}\label{eqn:decoupled}
%\begin{aligned}
%&\int_{0}^{T}\int_{\Omega_{\mathcal{R}\delta\left(t\right)}}-\mathbf{u}\cdot\partial_{t}\mathbf{q}+\nabla\mathbf{u}:\nabla\mathbf{q}dx+\frac{1}{2}\left(\mathcal{R}\mathbf{v}\cdot\nabla\right)\mathbf{u}\cdot\mathbf{q}dxdt 
%\\
%&\quad -\frac{1}{2}\left(\mathcal{R}\mathbf{v}\cdot\nabla\right)\mathbf{q}\cdot\mathbf{u}dxdt-\int_{0}^{T}\int_{\omega}\frac{1}{2}\partial_{t}\eta\partial_{t}\mathcal{R}\delta\xi+\partial_{t}\eta\partial_{t}\xi dx'dt+\int_{0}^{T}\left\langle K^{\prime}\left(\eta\right),\xi\right\rangle dt 
% \\
% &=
%\int_{0}^{T}\int_{\Omega_{\mathcal{R}\delta\left(t\right)}}\mathbf{f}\cdot\mathbf{q}dx+\int_{\omega}g\xi dx'dt 
%\end{aligned}
%\end{align}
%for all $(\mathbf{q},\xi)\in V_{T,\text{per}}^{\mathcal{R}\delta}$.
%\end{definition}

%\subsection{Decoupled periodic weak solutions}
We now use the above to define {\em decoupled time-periodic weak solutions}. For that we use the regularizing operators from Lemma~\ref{lm:regularizers}. Within this section we will suppress the $\varepsilon$ from the notations and will denote $\mathcal{R}\delta:=\mathcal{R}_{\varepsilon}\delta$. At this level we assume $\varepsilon$ is sufficiently small and fixed. 
We introduce the following:

\begin{definition} Given  $\left(\mathbf{v},\delta\right)\in L^{\infty}\left(I\times\omega\right)\times L^{2}\left(I\times\mathbb{R}^{3}\right)$ with $\delta (0,\cdot)=\delta(T,\cdot)$ and  $\left\Vert \delta \right\Vert _{L^{\infty}\left(I\times\omega\right)}<\kappa$,
we say a couple $(\mathbf{u},\eta)\in V_{S, \text{per}}^{\mathcal{R}\delta}$ is   a  time-periodic  weak solution to the \emph{decoupled}, \emph{linear} and  \emph{ regularized} problem  if:\footnote{Note that the  trace operator $\text{tr}:W^{1,2}\left(\Omega_{\mathcal{R}\delta\left(t\right)}\right)\mapsto W^{1/2,2}\left(\omega\right)\hookrightarrow L^{2}\left(\omega\right)$ is well defined since by  construction  the operator $\mathcal{R}_{\varepsilon}$ enjoys the estimate $\left\Vert \mathcal{R}\delta\right\Vert _{L^{\infty}\left(I\times\omega\right)}\le\left\Vert \delta\right\Vert _{L^{\infty}\left(I\times\omega\right)}<\kappa$.
}
%\[\eta(0, \cdot) =\eta(T,\cdot) \]
 %and that
\begin{equation}\label{eq:dec-reg-per-wf}
\begin{aligned}\int_{0}^{T}\int_{\Omega_{\mathcal{R}\delta\left(t\right)}}-\mathbf{u}\cdot\partial_{t}\mathbf{q}+\nabla\mathbf{u}:\nabla\mathbf{q}+\frac{1}{2}\left(\mathbf{u}\cdot\nabla\right)\mathbf{u}\cdot\mathbf{q}-\frac{1}{2}\left(\mathcal{R}\mathbf{v}\cdot\nabla\right)\mathbf{q}\cdot\mathbf{u}dxdt & +\\
\int_{0}^{T}\int_{\omega}-\frac{1}{2}\partial_{t}\eta\partial_{t}\left(\mathcal{R}\delta\right)\xi+\partial_{t}\eta\partial_{t}\xi dx'dt+\int_{0}^{T}\left\langle K^{\prime}\left(\eta\right),\xi\right\rangle dt & =\\
\int_{0}^{T}\int_{\Omega_{\mathcal{R}\delta\left(t\right)}}\mathbf{f}\cdot\mathbf{q}dx+\int_{\omega}g\xi dx'dt\quad\forall(\mathbf{q},\xi)\in V_{T,\text{per}}^{\mathcal{R}\delta}
\end{aligned}
\end{equation}
\end{definition}

The main result of this subsection is the following:
\begin{proposition}\label{prop:existence-dec-reg-per}
Assume (in addition to the previous discussion) that 
\[
\left(\delta,\mathbf{v}\right)\in\left(L^{\infty}\left(I;H_{0}^{2}\left(\omega\right)\right)\cap W^{1,\infty}\left(I;L^{2}\left(\omega\right)\right)\right)\times L^{2}\left(I;W^{1,2}\left(I;\mathbb{R}^{3}\right)\right)
\] 

with 
\begin{equation}\label{eqn:M1+M2}
\left\Vert \delta\right\Vert _{L^{\infty}\left(I;H_{0}^{2}\left(\omega\right)\right)}+\left\Vert \delta\right\Vert _{W^{1,\infty}\left(I;L^{2}\left(\omega\right)\right)}\le M_{1} \  \text{and} \ \left\Vert \mathbf{v}\right\Vert _{L^{2}\left(I;W^{1,2}\left(I;\mathbb{R}^{3}\right)\right)}\le M_{2}
\end{equation}
where $M_1$ is sufficiently small so that ensures $\left\Vert \delta\right\Vert _{L^{\infty}\left(I\times\omega\right)}<\kappa$ (see \eqref{eqn:M_1} for the precise choice).  Let $m\in \mathbb{R}$.
Then there exists a time-periodic weak solution $(\mathbf{u},\eta)$ for the decoupled and regularized problem with data $(\mathbf{v},\delta)$ such that 
\begin{equation}
\int_{\omega}\eta\left(t,\cdot\right)dx'=m \text{ for all }t\in I.
\end{equation}
 and which satisfies  \begin{equation}\label{eqn:periodic-estimate}
\int_{0}^{T}\int_{\Omega_{\mathcal{R}\delta\left(t\right)}}\left|\nabla \mathbf{u}\right|^{2}dxdt\le\int_{0}^{T}\int_{\Omega_{\mathcal{R}\delta\left(t\right)}}\mathbf{f}\cdot \mathbf{u}dx+\int_{0}^{T}\int_{\omega}g\partial_{t}\eta dx'dt.
\end{equation}
and, after denoting \[
E\left(t\right):=\int_{\mathcal{R}\delta\left(t\right)}\left|\mathbf{u}\right|^{2}dx+\int_{\omega}\left|\partial_{t}\eta\right|^{2}dx'+K\left(\eta(t)\right),\ t\in\left[0,T\right]
\]
that \footnote{Recall the definition of $C(\mathbf{f},g)$ from \eqref{eqn:C(f,g)}.}
\begin{equation}\label{eqn:energy-dec-estimate}
\sup_{t\in\left[0,T\right]}E(t)+\int_{0}^{T}\int_{\mathcal{R}\delta\left(t\right)}\left|\nabla\mathbf{u}\right|^{2}dxdt\le c\left(\Omega,\kappa,c_{0},T\right)\cdot\left(C\left(\mathbf{f},g\right)^{2}+C\left(\mathbf{f},g\right)+m^{2}\right)
\end{equation}
The proof of the proposition is split into several parts.
\end{proposition}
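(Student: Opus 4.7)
The plan is to construct the time-periodic weak solution as the limit of finite-dimensional Galerkin approximations, where the periodicity at each finite-dimensional level is obtained by a Schaefer (Leray--Schauder) type fixed point argument applied to the Poincar\'e map $(\mathbf{u}_n(0),\eta_n(0),\partial_t\eta_n(0))\mapsto(\mathbf{u}_n(T),\eta_n(T),\partial_t\eta_n(T))$. Since the geometry $\mathcal{R}\delta$ is smooth and \emph{decoupled} from the unknown (Lemma~\ref{lm:regularizers}), the moving domain only produces smooth time-dependent coefficients, so pulling back to the reference domain $\Omega$ via $\phi_{\mathcal{R}\delta(t)}$ makes ODE theory and finite-dimensional projections available in a standard fashion. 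The critical ingredient is that the a-priori bound \eqref{eqn:energy-dec-estimate} depends only on the data, and not on the initial datum, which is exactly what is needed to apply Schaefer's theorem (since the initial datum is itself unknown).

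Concretely, I would first construct a coupled finite-dimensional space $X_n\subset V_{T,\mathrm{per}}^{\mathcal{R}\delta}$ respecting simultaneously the divergence-free constraint and the no-slip coupling $\mathbf{u}\circ\phi_{\mathcal{R}\delta}=\partial_t\eta\,\mathbf{e}_3$. This is done by taking a basis $\{b_k\}$ of $H^2_0(\omega)\cap\{\int_\omega\cdot\,dx'=0\}$, lifting it via the divergence-free extension operator $\mathcal{F}_{\mathcal{R}\delta}$ of Proposition~\ref{prop:solenoidal-extension-operator}, and completing it with an $L^2$-orthonormal basis of divergence-free functions vanishing on $\partial\Omega_{\mathcal{R}\delta(t)}$. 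The mean-value constraint $\int_\omega\eta=m$ is enforced by writing $\eta=m\psi+\tilde\eta$ with $\psi$ as in \eqref{eqn:psi-bump-function-test-eta} and $\tilde\eta$ in the zero-mean space. Plugging the resulting ansatz into \eqref{eq:dec-reg-per-wf} yields a \emph{linear} ODE system for the coefficients (linearity is a feature of the decoupling $\mathcal{R}\mathbf{v}$ in the convective term), so Carath\'eodory theory gives a unique solution for each initial condition, and the Poincar\'e map $\Phi_n$ is continuous and compact on a finite-dimensional ball. Schaefer's theorem then provides a fixed point, i.e.\ a periodic Galerkin solution, provided we have a uniform bound on the solutions of the homotopy $\lambda\Phi_n$ for $\lambda\in[0,1]$.

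The required uniform bound is obtained by reproducing the formal computation of Subsection~\ref{ssec:a-priori estimates} at the Galerkin level. Testing with $(\mathbf{u}_n,\partial_t\eta_n)$ yields the energy identity \eqref{eqn:formal-energy}; the linearization of the convective term has been designed precisely so that the cancellation leading to \eqref{eqn:formal2} still holds, using Reynolds' transport theorem and the coupling on the moving boundary. Then the argument via the mean-value-in-time point $t_0$ gives the bound on $\sup_t E(t)$ by $\fint_0^T E\,dt+cC(\mathbf{f},g)$; testing with $(\mathcal{F}_{\mathcal{R}\delta}(\mathcal{M}_{\mathcal{R}\delta}\eta_n),\mathcal{M}_{\mathcal{R}\delta}\eta_n)$ together with Proposition~\ref{prop:solenoidal-extension-operator}, the coercivity~\eqref{eqn: coercivity-K}, and Young's inequality absorbs the $K(\eta)$ average and gives \eqref{eqn:formal-final}. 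This proves \eqref{eqn:energy-dec-estimate} uniformly in $n$ and uniformly in $\lambda\in[0,1]$.

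Finally, the passage $n\to\infty$ uses the uniform estimates \eqref{eqn:energy-dec-estimate}, an Aubin--Lions-type compactness argument adapted to moving domains (see e.g.~\cite{MS19, PhD11}), and the strong convergence of all nonlinear terms (the only nonlinearity is the convective term, whose coefficient $\mathcal{R}\mathbf{v}$ is fixed and smooth), to recover \eqref{eq:dec-reg-per-wf} in the limit. Periodicity is preserved in the weakly continuous sense built into $V_{S,\mathrm{per}}^{\mathcal{R}\delta}$, and \eqref{eqn:eta=m} passes to the limit trivially since $\int_\omega\eta_n=m$ holds for every $n$ by construction. The main obstacle I foresee is the careful construction of the Galerkin basis compatible with both the divergence constraint and the no-slip coupling on the moving boundary; this is precisely what the extension operator $\mathcal{F}_{\mathcal{R}\delta}$ of \cite{MS19} is built to overcome, and the remaining computations are then analogous to~\cite{Gr05, MS19} at the discrete level combined with the periodicity-via-Schaefer strategy of~\cite{Pr63, GK18, Ab20}.
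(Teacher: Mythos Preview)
Your overall strategy---Galerkin approximation, Schaefer fixed point on the Poincar\'e map, and passage to the limit---is exactly the paper's. There is, however, a real gap in how you obtain the a-priori bound required by Schaefer's theorem. You propose to reproduce the formal computation of Subsection~\ref{ssec:a-priori estimates} at the discrete level, but that computation uses the periodicity $E(0)=E(T)$ in two places: first to obtain~\eqref{eqn:formal2}, and more seriously to kill the boundary term $\bigl[\int_\omega\partial_t\eta_n\,(\eta_n-m\Psi)\bigr]_0^T$ (and its fluid analogue) that appears after integrating by parts in time when you test with $(\mathcal{F}_{\mathcal{R}\delta}(\mathcal{M}_{\mathcal{R}\delta}\eta_n),\mathcal{M}_{\mathcal{R}\delta}\eta_n)$. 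Under the Schaefer homotopy the discrete solution satisfies only $(\mathbf{b}_n(0),\mathbf{b}_n'(0))=\lambda(\mathbf{b}_n(T),\mathbf{b}_n'(T))$ with $\lambda\in[0,1]$, so these boundary terms neither vanish nor are controlled by the data, and the argument does not close.

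The paper repairs this as follows (Proposition~\ref{prop: invariant-ball}). First it observes that the homotopy relation forces $E_n(T)\ge E_n(0)$, which turns~\eqref{eqn:formal2} into the inequality~\eqref{eqn:bad-case} and still yields the gradient bound~\eqref{eqn:consequence-claim}; this part you could also recover. Second---and this is the device you are missing---instead of the mean-value-in-time argument it localizes in time with cutoffs: it multiplies the energy identity by $\phi^3$ with $\phi(0)=0$, $\phi\equiv 1$ on $[T/4,T]$ to obtain~\eqref{eqn:En-first-semicircle}, and then tests the system with $\psi^2(\mathbf{B}_n,\eta_n-m\Psi)$ where $\psi\in C^\infty_0(0,T)$, so that \emph{all} time-boundary terms drop out. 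The combination yields $E_n(T)\le R$ with $R$ depending only on the data, and Schaefer applies.

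Two minor points. The paper builds the Galerkin basis via a Stokes extension followed by the Piola transform of Lemma~\ref{lm:Piola}, not via $\mathcal{F}_{\mathcal{R}\delta}$; either route works, but with the paper's basis the ``test with $\eta_n$'' step must use the discrete object $\mathbf{B}_n=\sum_k b_n^k\mathbf{X}_k$ of~\eqref{eqn:B_n} rather than $\mathcal{F}_{\mathcal{R}\delta}(\eta_n-m\Psi)$, which need not lie in the span. And no Aubin--Lions compactness is needed for $n\to\infty$: the decoupled problem~\eqref{eq:dec-reg-per-wf} is linear in $(\mathbf{u},\eta)$, so the weak convergences alone suffice to pass to the limit.
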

%\begin{proof}
%See Subsubsection~\ref{subsubsection:prof}.
%\end{proof}

\subsection{The construction of a decoupled, regularized, periodic solution}
We consider a sequence $(\mathbf{f}_n,, g_n)$ of smooth functions  such that 
\[
\left(\mathbf{f}_{n},g_{n}\right)\xrightarrow{n\to\infty}\left(\mathbf{f},g\right)\ \text{in}\ L_{\text{loc}}^{2}\left(I\times\mathbb{R}^{3}; \mathbb{R}^{3}\right)\times L_{\text{loc}}^{2}\left(\mathbb{R}\times\omega; \mathbb{R}\right) \]
and \begin{equation}%\label{eqn:C(f,g)}
\int_{0}^{T}\int_{\mathbb{R}^{3}}\left|\mathbf{f}_{n}\right|^{2}dxdt+\int_{0}^{T}\int_{\omega}\left|g_{n}\right|^{2}dx'dt\le\int_{0}^{T}\int_{\mathbb{R}^{3}}\left|\mathbf{f}\right|^{2}dxdt+\int_{0}^{T}\int_{\omega}\left|g\right|^{2}dx'dt=:C(\mathbf{f},g).
\end{equation}

Next  let   $(\hat{Y}_{k})_{k \ge 1}$ be a basis of $\left\{ Y\in H_{0}^{2}\left(\omega\right):\ \int_{\omega}Ydx'=0\right\} $ which is orthogonal in $L^{2} (\omega)$.  
Proceeding in the same manner as in \cite{Gr05} 
we  extend each $\hat{Y}_k$ to a divergence-free function $\mathbf{U}_k : \Omega \mapsto \mathbb{R}^{3}$ by solving the Stokes problem \[\begin{cases}
-\Delta \mathbf{U}_{k}+\nabla P_{k}=0 & \text{in}\ \Omega\\
\text{div} \ \mathbf{U}_{k}=0 & \text{in}\ \Omega\\
\mathbf{U}_{k}=\hat{Y}_{k}\mathbf{e}_3& \text{on}\ \partial\Omega
\end{cases}\]
Then we use the Piola transform from Lemma~ \ref{lm:Piola} and define 
\begin{equation}
\label{eq:Y}
\left(\mathbf{Y}_{k}\left(t,\cdot\right),Y_{k}\left(t,\cdot\right)\right)=\left(\mathcal{J}_{\mathcal{R\delta}\left(t\right)}\mathbf{U}_{k},\hat{Y}_{k}\right)\in V_{T,\text{per}}^{\mathcal{R}\delta}
\end{equation}
We consider also $(\hat{Z}_{k})_{k \ge 1}$ an $L^{2}$-orthonormal basis of the space $H^{1}_{0, \text{div}}$ which we extend in the same manner to \begin{equation}
\left(\mathbf{Z}_{k}\left(t,\cdot\right),Z_{k}\left(t,\cdot\right)\right)=\left(\mathcal{J}_{\mathcal{R\delta}\left(t\right)}Z_{k},0\right)\in V_{T,\text{per}}^{\mathcal{R}\delta}.
\end{equation}
Let us finally define \begin{equation}\label{eqn:defn-Xk} 
\left(\mathbf{X}_{k}\left(t,\cdot\right),X_{k}\left(t,\cdot\right)\right):=\begin{cases}
\left(\mathbf{Y}_{(k+1)/2}\left(t,\cdot\right),Y_{(k+1)/2}\left(t,\cdot\right)\right) & k\ \text{odd}\\
\left(\mathbf{Z}_{k/2}\left(t,\cdot\right),0\right) & k\ \text{even}
\end{cases}\quad\text{for all }k\in\mathbb{N}
\end{equation}
\begin{remark}\label{remark:density-in-testfunctions}
With reasons  very similar to the ones from \cite[p. 234]{LR14} it can be proved that   the space  \[\text{span}\left\{ \varphi\mathbf{X}_{k},\varphi X_{k}:\varphi\in C\left[0,T\right]\cap C^{1}\left(0,T\right),\ \varphi\left(0\right)=\varphi\left(T\right)\right\} \]
 is dense in the space of test functions $V_{T, \text{per}} ^{\mathcal{R}\delta}$. 
 \end{remark} 
We now make the following ansatz:
\begin{equation}\label{eqn:ansatz-u_n-eta_n}
\begin{split}
\eta_{n}\left(t,x'\right) & =\sum_{k=1}^{n}b_{n}^{k}\left(t\right)X_{k}(x')+m\Psi, \ t\in\left[0,T\right],x'\in\omega\\
\mathbf{u}_{n}\left(t,x\right) & =\sum_{k=1}^{n}a_{n}^{k}\left(t\right)\mathbf{X}_{k}\left(t,x\right),\ t\in\left[0,T\right],\ x\in\Omega_{\mathcal{R}\delta\left(t\right)}.
\end{split}
\end{equation}
and we impose the condition $\text{tr}_{\delta}\mathbf{u}_{n}\circ \phi_\delta=\partial_{t}\eta_{n}\mathbf{e}_3$, which means here precisely
\begin{equation}\label{eqn:tr(un)=deltatn}
a_{n}^{k}\left(t\right)=\left(b_{n}^{k}\right)^{\prime}\left(t\right),\, t\in\left[0,T\right],\, k\in\{1,...,n\},
\end{equation} 
by \eqref{eq:Y}.
%\begin{remark}
%Please observe that we have assumed that $\int_{\omega} \eta_n(t,\cdot)dx'=0$ which in the limit will yield $\int_{\omega} \eta(t,\cdot)dx'=0$. This can however be assumed without any loss of generality,  but in order to keep the notation simple we have chosen not to add another mass  term (and its projection on each finite dimensional space). Everything is in accordance with Remark~\ref{rmk:int-eta-constant-time}.
%\end{remark}
Due to the trace operator (see Lemma~\ref{lm:trace}) we get 
\begin{equation}\label{eqn:trace(eta_n)}
 \int_{\omega}\left|\partial_{t}\eta_n\right|^{2}dx'\le c_{1}\left(\Omega,k\right)\int_{\Omega_{\mathcal{R\delta}\left(t\right)}}\left|\mathbf{u}_{n}\right|^{2}dx \text{ for all }t\in\left[0,T\right].
\end{equation}
We now  seek for 
\[
\mathbf{b}_{n}:=\left({b}_{n}^{k}\right)_{k=1}^{n}:\left[0,T\right]\mapsto\mathbb{R}^{n}\]
such that the following equation is satisfied:

\begin{align}\label{eqn:xk}
\begin{aligned}
&\int_{\Omega_{\mathcal{R}\delta\left(t\right)}}\partial_{t}\mathbf{u}_{n}\cdot\mathbf{X}_{k}dx+\frac{1}{2}\int_{\omega}\partial_{t}\eta_{n}\partial_{t}\mathcal{R}\delta Y_{k}dx'+\int_{\Omega_{\mathcal{R}\delta\left(t\right)}}\nabla \mathbf{u}_{n}\cdot\nabla\mathbf{X}_{k}dx
\\
&\quad +\frac{1}{2}\int_{\Omega_{\mathcal{R}\delta\left(t\right)}}\left(\mathcal{R}\mathbf{v}\cdot\nabla\right)\mathbf{u}_{n}\cdot\mathbf{X}_{k}dx-\frac{1}{2}\int_{\Omega_{\mathcal{R}\delta\left(t\right)}}\left(\mathcal{R}\mathbf{v}\cdot\nabla\right)\mathbf{X}_{k}\cdot \mathbf{u}_{n}dx+\int_{\omega}\partial_{tt}\eta_{n}X_{k}dx'+\left\langle K^{\prime}\left(\eta_{n}\right),X_{k}\right\rangle 
\\ 
&=\int_{\Omega_{\mathcal{R}\delta\left(t\right)}}\mathbf{f}_{n}\cdot\mathbf{X}_{k}dx+\int_{\omega}g_{n}X_{k}dx',\text{ for all }t\in [0,T] \text{ and for all } 1\le k \le n
\end{aligned}
\end{align}
Please note  that   \eqref{eqn:tr(un)=deltatn} and \eqref{eqn:xk} read as a second order  linear ordinary differential equation with the unknown $\mathbf{b}_n$. Now we prove that for any given initial values $\mathbf{b}_{n}\left(0\right),\mathbf{b}_{n}^{\prime}\left(0\right)$ this differential equation has a unique solution: the coefficient of $\left(\mathbf{b}_{n}\right)^{\prime\prime}$ is given by the mass matrix
\[
M\left(t\right):=\left(\int_{\Omega_{\delta\left(t\right)}}\mathbf{X}_{i}\cdot\mathbf{X}_{j}dx\right)_{1\le i,j\le n}+I_{n}=:\tilde{M}\left(t \right)+I_n.
\]
Since for any $\xi \in \mathbb{R}^{n}$ we have that 
\begin{equation}
\begin{aligned}M\left(t\right)\xi\cdot\xi & =\tilde{M}\left(t\right)\xi\cdot\xi+\text{diag}\left(1,0,1,0,\ldots\right)\xi\cdot\xi\\
 & \ge\sum_{i,j=1}^{n}\xi_{i}\xi_{j}\int_{\Omega_{\delta\left(t\right)}}\mathbf{X}_{i}\cdot\mathbf{X}_{j}dx\\
 & \ge\int_{\Omega_{\delta\left(t\right)}}\left(\sum_{i=1}^{n}\xi_{i}\mathbf{X}_{i}\right)\cdot\left(\sum_{j=1}^{n}\xi_{j}\mathbf{X}_{j}\right)dx\\
 & \ge0
\end{aligned}
\end{equation}
it follows that $M\xi=0$ implies that $\sum_{i}\xi_i \mathbf{X}_{i}=0$ and this in turn  yields (after applying the inverse of the Piola transform $\mathcal{J}_{\mathcal{R}\delta(t)}$) that $\xi=\mathbf{0}$ due to linear independence of the vectors from \eqref{eqn:defn-Xk} 
%Consequently,  any eigenvalue of $M\left(t\right)$ is nonzero, hence $M\left(t\right)$  is invertible  for all $t\in\left[0,T\right]$.given by the mass matrix
%\[
%M\left(t\right):=\left(\int_{\Omega_{\delta\left(t\right)}}\mathbf{X}_{i}\cdot\mathbf{X}_{j}dx\right)_{1\le i,j\le n}+I_{n}=:\tilde{M}\left(t \right)+I_n,
%\]
%where $I_n$ is the identity matrix (in $n$ dimensions) that relates to the orthogonality of the {\bf WRONG!}.
%
%
%Since for any $\xi \in \mathbb{R}^{n}$ we have that 
%\[\tilde{M}\left(t\right)\xi\cdot\xi=\sum_{i,j=1}^{n}\xi_{i}\xi_{j}\int_{\Omega_{\delta\left(t\right)}}\mathbf{X}_{i}\cdot\mathbf{X}_{j}dx=\int_{\Omega_{\delta\left(t\right)}}\left(\sum_{i=1}^{n}\xi_{i}\mathbf{X}_{i}\right)\cdot\left(\sum_{j=1}^{n}\xi_{j}\mathbf{X}_{j}\right)dx\ge0
%\]
%it follows that $M\left(t \right)$ is positive definite for all $t\in\left[0,T\right]$.
Hence it follows that the vector field $\mathbf{b}_n$ exists in an interval $[0,T_0]$ for some $T_0 >0$. 
We introduce the energy for the discretized system: 
\begin{equation}\label{eqn:def-E(t)}
E_{n}\left(t\right):=\int_{\Omega_{\mathcal{R}\delta\left(t\right)}}\left|\mathbf{u}_{n}\left(t\right)\right|^{2}dx+\int_{\omega}\left|\partial_{t}\eta_{n}\left(t\right)\right|^{2}dx'+K\left(\eta_{n}\left(t\right)\right),\ t\in\left[0,T\right].
\end{equation}
By multiplying \eqref{eqn:xk} with $(a_{n} ^{k}$ and summing over $k=1,n$ we do indeed get 
 \begin{equation}\label{eqn:energy-1}
\frac{d}{dt}E_{n}\left(t\right)+\int_{\Omega_{\mathcal{R}\delta\left(t\right)}}\left|\nabla \mathbf{u}_{n}\right|^{2}dx=\int_{\Omega_{\mathcal{R}\delta\left(t\right)}}\mathbf{f}_{n}\cdot \mathbf{u}_{n}dx+\int_{\omega}g_{n}\eta_{n}dx' \text{ for all }t\in\left[0,T\right].
\end{equation}
In particular this allows to extend $\mathbf{b}_n$ to the whole  interval  $[0,T]$.

\subsection{The key estimate}

The following proposition is the main effort to obtain the existence of a fixed point.
\begin{proposition}\label{prop: invariant-ball} 
Suppose that \[E_{n} (T) \ge E_{n} (0)\]
(or,  more general, that \eqref{eqn:bad-case} holds). 
Then
there exists a constant    
\[R=c\left(\Omega,\kappa,c_{0},T\right)\cdot\left(C\left(\mathbf{f},g\right)^{2}+C\left(\mathbf{f},g\right)+m^{2}\right)>0\]
for which
\[
\ E_{n}\left(T\right)\le R.\]
In particular \[E_{n} \left(0 \right) \le E_{n} \left(T \right) \le R.\]
\end{proposition}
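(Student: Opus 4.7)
The plan is to run, at the discrete Galerkin level, the same chain of formal estimates derived in Subsection~\ref{ssec:a-priori estimates}, and to use the hypothesis $E_n(T)\ge E_n(0)$ to control the boundary terms that in the formal setting were killed by the assumed periodicity.

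As a first step, integrate the discrete energy identity~\eqref{eqn:energy-1} over $[0,T]$; since $E_n(T)-E_n(0)\ge 0$ drops with the right sign, this yields the discrete analogue of~\eqref{eqn:formal2}. Combining with the Poincar\'e inequality on $\Omega_{\mathcal{R}\delta(t)}$ and the trace control~\eqref{eqn:trace(eta_n)}, then applying Young's inequality, gives the discrete version of~\eqref{eqn:formal3}:
\[
\|\mathbf{u}_n\|_{L^2(I;W^{1,2})}^2+\|\partial_t\eta_n\|_{L^2(I;L^2)}^2\le c\,C(\mathbf{f},g).
\]
Choose $t_0\in[0,T]$ via the mean-value theorem so that $E_n(t_0)=\fint_0^T E_n(s)\,ds$; integrating~\eqref{eqn:energy-1} from $t_0$ to $t$ and taking the supremum produces the Galerkin counterpart of~\eqref{eqn:formal-K-prime},
\[
\sup_{t\in[0,T]}E_n(t)\;\le\;\fint_0^T K(\eta_n)\,dt\;+\;c\,C(\mathbf{f},g).
\]

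The core step is to bound $\fint_0^T K(\eta_n)\,dt$ by a small multiple of $\sup_t E_n(t)$ plus the data, i.e.\ to reproduce~\eqref{eqn:formal-claim} at the discrete level. The key observation is that, after picking $\Psi=\psi$ in the ansatz~\eqref{eqn:ansatz-u_n-eta_n}, the mean-free part $\mathcal{S}_n:=\eta_n-m\Psi=\sum_{k\text{ odd}}b_n^k X_k$ already lies in the Galerkin span, and its Stokes extension $\mathcal{W}_n:=\sum_{k\text{ odd}}b_n^k\mathbf{X}_k$ satisfies $\mathrm{tr}_{\mathcal{R}\delta}\mathcal{W}_n=\mathcal{S}_n\mathbf{e}_3$. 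Hence multiplying~\eqref{eqn:xk} by $b_n^k$, summing over odd $k$, and integrating on $[0,T]$ produces an identity whose leading piece is $2\int_0^T K(\eta_n)\,dt$. The remainders split into four groups treated exactly as $I_1,\dots,I_7$ in Subsection~\ref{ssec:a-priori estimates}: the fluid inertia and the linearized convective terms (controlled via $M_2$ and the Stokes-type bounds on $\mathbf{X}_k$, which are qualitatively comparable to those of the operator $\mathcal{F}_\eta$ of Proposition~\ref{prop:solenoidal-extension-operator}); the regularized lower-order shell contribution involving $\partial_t\mathcal{R}\delta$ (controlled via $M_1$ and Lemma~\ref{lm:regularizers}); the $m\Psi$-corrections, producing terms of order $m^2$; and the forcing terms, producing contributions of order $C(\mathbf{f},g)^{1/2}\sup_t\sqrt{E_n(t)}$.

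The main obstacle is that the time-boundary contributions produced by the integration by parts in $t$ of $\int_0^T\partial_t\mathbf{u}_n\cdot\mathcal{W}_n\,dt$ and $\int_0^T\partial_{tt}\eta_n\cdot\mathcal{S}_n\,dt$ do not vanish: unlike in the formal computation, the Galerkin solution is not yet known to be periodic. These terms are however dominated by a multiple of $E_n(0)+E_n(T)$, so that the standing assumption $E_n(T)\ge E_n(0)$ together with the trivial bound $E_n(T)\le \sup_tE_n(t)$ allows to carry them to the left-hand side and absorb them into a fraction of $\sup_tE_n(t)$ via Young's inequality. The outcome is the discrete analogue of~\eqref{eqn:formal-claim},
\[
2\int_0^T K(\eta_n)\,dt\;\le\;\tfrac12\sup_{t\in[0,T]}E_n(t)\;+\;c\bigl(C(\mathbf{f},g)+C(\mathbf{f},g)^2+m^2\bigr).
\]
Plugging this back into the previous bound and absorbing the $\sup_tE_n$ term on the right yields $\sup_{t\in[0,T]}E_n(t)\le R$ with $R=c(\Omega,\kappa,c_0,T)\bigl(C(\mathbf{f},g)^2+C(\mathbf{f},g)+m^2\bigr)$, whence in particular $E_n(0)\le E_n(T)\le R$, which is the asserted bound.
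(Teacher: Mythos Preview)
Your overall plan matches the paper's formal argument in Subsection~\ref{ssec:a-priori estimates}, and the first two steps (the dissipation bound and the mean-value reduction to $\fint_0^T K(\eta_n)$) are fine. The gap is precisely where you yourself flag the ``main obstacle'': the time-boundary terms coming from integrating $\int_0^T\!\int\partial_t\mathbf u_n\cdot\mathcal W_n$ and $\int_0^T\!\int_\omega\partial_{tt}\eta_n\,\mathcal S_n$ by parts. These terms are of size
\[
\|\mathbf u_n(T)\|_{L^2}\|\mathcal W_n(T)\|_{L^2}+\|\partial_t\eta_n(T)\|_{L^2}\|\mathcal S_n(T)\|_{L^2}
\;\lesssim\; c_1\,E_n(T)+c_1\,m^2,
\]
and the constant $c_1$ here carries no smallness: both factors in each product are controlled only by $\sqrt{E_n(T)}$. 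Young's inequality therefore does not buy you a small coefficient; it merely splits a product of two $\sqrt{E_n(T)}$'s into a sum that is still $\sim E_n(T)$. Feeding this into your two inequalities gives
\[
\sup_t E_n\;\le\;\fint_0^T K(\eta_n)+cC,\qquad
2\int_0^T K(\eta_n)\;\le\;(\theta+2c_1)\sup_t E_n + c_\theta(\ldots),
\]
and combining yields $\sup_t E_n\le \frac{\theta+2c_1}{2T}\sup_t E_n+\ldots$. You may take $\theta$ small, but $c_1$ is fixed, so the loop does not close once $T\lesssim c_1$. The claim that the boundary terms can be ``absorbed into a fraction of $\sup_t E_n(t)$'' is therefore not justified.

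The paper circumvents this difficulty by a genuinely different device: it inserts time cutoffs. A first cutoff $\phi\in C^\infty(0,T)$ with $\phi\equiv 1$ on $[T/4,T]$ (and vanishing near $0$) is used in the energy identity, so that the boundary contribution at $t=0$ disappears while $\phi^3(T)E_n(T)=E_n(T)$ survives; the price is a commutator term supported on $[0,T/4]$ containing $\phi^2\phi'K(\eta_n)$. A second cutoff $\psi\in C_0^\infty(0,T)$ with $\psi=\phi$ on $[0,3T/4]$ is then inserted into the $(\mathbf B_n,\mathcal M(\eta_n))$ test, so that \emph{both} endpoint contributions vanish identically in the $K$-estimate. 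Because $\psi\le\phi$ and the two cutoffs agree on $[0,T/4]$, the commutator term from the first step is dominated by $\int_0^T\psi^2 K(\eta_n)$, and the argument closes with a coefficient $\theta$ that can be taken arbitrarily small, independently of $T$. This localization in time is the missing ingredient in your proposal.
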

\begin{proof}
The assumption $E_{n} (T) \ge E_{n} (0)$  yields 
\begin{equation}\label{eqn:bad-case}
\int_{0}^{T}\int_{\Omega_{\mathcal{R}\delta\left(t\right)}}\mathbf{f}_{n}\cdot\mathbf{u}_{n}dxdt+\int_{0}^{T}\int_{\omega}g_{n}\partial_{t}\eta_{n}dx'dt\ge\int_{0}^{T}\int_{\Omega_{\mathcal{R}\delta\left(t\right)}}\left|\nabla\mathbf{u}_{n}\right|^{2}dxdt.
\end{equation}
From  \eqref{eqn:bad-case}, Young's inequality,  Poincare's inequality and the trace operator (see Lemma~\ref{lm:trace}) we get
 \begin{equation}\label{eqn:consequence-claim}
\int_{0}^{T}\int_{\Omega_{\mathcal{R}\delta\left(t\right)}}\left|\nabla\mathbf{u}_{n}\right|^{2}+\left|\mathbf{u}_{n}\right|^{2}dx+\int_{\omega}\left|\partial_{t}\eta_{n}\right|^{2}dx'dt\le c_{2}\left(\Omega,\kappa\right)\left(\int_{0}^{T}\int_{\Omega_{\mathcal{R}\delta\left(t\right)}}\left|\mathbf{f}\right|^{2}dxdt+\int_{\omega}\left|g\right|^{2}dxdt\right).
\end{equation}
Recall the relation \eqref{eqn:xk}, and note that we can, for a function $\phi\in C^{\infty} (0,T)$ with $\phi \le 1$, $\phi \equiv 1$ on $[T/4,T]$ and $\phi^{\prime} \le \frac{c}{T}$, 
use the test function $
\phi^{3}\left(\mathbf{u}_{n},\partial_{t}\eta_{n}\right)
$
and  obtain

\begin{equation}\label{eqn:En-first-semicircle}
\begin{split}
\phi^{3}E_{n}\left(t\right)\le & c\int_{0}^{T}\int_{\Omega_{\mathcal{R}\delta\left(t\right)}}\phi^{2}\phi^{\prime}\left|\mathbf{u}_{n}\right|^{2}+\phi^{3}\mathbf{f}_{n}\cdot\mathbf{u}_{n}dxdt+\\
 & c\int_{0}^{T}\int_{\omega}\phi^{3}g_{n}\partial_{t}\eta_{n}+\phi^{2}\left(\partial_{t}\eta_{n}\right)^{2}dx'dt+c\int_{0}^{T}\phi^{2}\phi^{\prime}K\left(\eta_{n}\right)dt\\
\le & c\frac{c\left(\mathbf{f},g\right)}{T}+c\fint_{0}^{T/4}\phi^{2}K\left(\eta_{n}\right)dt
\end{split}
\end{equation}
%From the mean value theorem there is $t_0 \in [0,T]$ for which
%\begin{equation}\label{eqn:mean-value-En}
%E\left(t_{0}\right)=\fint_{0}^{T}E\left(t\right)dt
%\end{equation}
%From \eqref{eqn:energy-1} and \eqref{eqn:mean-value-En} we immediately get that \begin{equation}
%\sup_{t\in\left[0,T\right]}E_{n}\left(t\right)\le c+\fint_{0}^{T}E\left(t\right)dt.
%\end{equation}
We aim to estimate uniformly the term $\fint_{0}^{T/4}E\left(t\right)dt$.

To this end,  let us consider a function $\psi\in C_{0}^{\infty}\left(0,T\right)$ such that $\psi = \phi$ on $[0, 3T/4]$.  Note that we may assume that $\psi ^{\prime} \le \frac{c}{T}$. 
By  multiplying the equation~\eqref{eqn:xk} with $\psi^{2}b_{n}^{k}$ we are allowed to use the test function
 \begin{equation}\label{eqn:B_n}
\psi^{2}\left(\mathbf{B}_{n},\mathcal{M}\left(\eta_{n}\right)\right):=\psi^{2}\left(\sum_{k=1}^{n}b_{n}^{k}\mathbf{X}_{k},\sum_{k=1}^{n}b_{n}^{k}X_{k}\right)\in V_{T,}^{\mathcal{R}\delta},
 \end{equation}
this is the discrete represent of the test-function related to ''$\eta-m$''.
We obtain
\begin{equation}
\begin{aligned}\int_{0}^{T}\psi^{2}K\left(\eta_{n}\right)dt\le & \int_{0}^{T}\int_{\Omega_{\mathcal{R}\delta\left(t\right)}}\psi^{2}\left(\left|\nabla\mathbf{u}_{n}:\nabla\mathbf{B}_{n}\right|+\left|\nabla\mathbf{B}_{n}:\mathcal{R}\mathbf{v}\otimes\mathbf{u}_{n}\right|+\left|\nabla\mathbf{u}_{n}:\mathbf{B}_{n}\otimes\mathcal{R}\mathbf{v}\right|\right)dxdt+\\
 & \int_{0}^{T}\int_{\Omega_{\mathcal{R}\delta\left(t\right)}}\left|\mathbf{u}_{n}\cdot\partial_{t}\left(\psi^{2}\mathbf{B}_{n}\right)\right|dxdt+\int_{0}^{T}\psi^{2}\int_{\omega}\left|\eta_{n}\partial_{t}\eta_{n}\partial_{t}\mathcal{R}\delta\right|+\left|\partial_{t}\eta_{n}\partial_{t}\left(\eta_{n}\phi\right)\right|dx'dt+\\
 & +\int_{0}^{T}\psi^{2}\left\langle K^{\prime}\left(\eta_{n}\right),m\Psi\right\rangle dt+c\left(\mathbf{f},g\right)\\
 & =:\sum_{k=1}^{7}I_{k}+C\left(\mathbf{f},g\right)
\end{aligned}
\end{equation}
Now we need some properties of $\mathbf{B}_n$ (from \eqref{eqn:B_n}),  which follow from \eqref{eqn:bad-case} and the definition of $\mathbf{X}_k$ and $X_k$:
 \begin{equation}\label{eqn:properties-B_n}
\int_{0}^{T}\int_{\Omega_{\mathcal{R}\delta\left(t\right)}}\left|\partial_{t}\mathbf{B}_{n}\right|^{2}+\left|\mathbf{B}_{n}\right|^{2}+\left|\nabla \mathbf{B}\right|^{2}dx\le C\left(\Omega,\kappa,c_{0}\right)\int_{0}^{T}K\left(\eta_{n}\left(t,\cdot\right)\right)dt+C(\mathbf{f},g).
\end{equation}
Let $\theta >0$; we use Young's inequality and \eqref{eqn:bad-case}, \eqref{eqn:properties-B_n} to get that 
\begin{equation}\label{eqn:I1+I2}
I_{1}\le\theta\int_{0}^{T}\int_{0}^{T}\psi^{2}K\left(\eta_{n}\left(t\right)\right)dt+\frac{c(C(\mathbf{f},g)))}{\theta}
\end{equation}
and for $\theta$ sufficiently small we can absorb the $K(\eta_n)$ term into the left-hand side.
Next,

\begin{equation}\label{eqn:I3+I4+I5}
\begin{aligned}
\sum_{k=2}^{6}I_{k} & \le\left\Vert \psi^{2}\nabla\mathbf{B}_{n}\right\Vert _{L_{t}^{\infty}L_{x}^{3/2}}\left\Vert \mathcal{R}\mathbf{v}\right\Vert _{L_{t}^{2}L_{x}^{6}}\left\Vert \mathbf{u}_{n}\right\Vert _{L_{t}^{2}L_{x}^{6}}+\left\Vert \psi^{2}\nabla\mathbf{B}_{n}\right\Vert _{L_{t}^{\infty}L_{x}^{3}}\left\Vert \mathcal{R}\mathbf{v}\right\Vert _{L_{t}^{2}L_{x}^{6}}\left\Vert \nabla\mathbf{u}_{n}\right\Vert _{L_{t}^{2}L_{x}^{2}}
\\
&\quad +
 \left\Vert \partial_{t}\mathcal{R}\mathbf{\delta}\right\Vert _{L_{t}^{2}L_{x}^{2}}\left\Vert \partial_{t}\eta_{n}\right\Vert _{L_{t}^{2}L_{x}^{2}}\left\Vert \psi^{2}\eta_{n}\right\Vert _{L_{t}^{\infty}L_{x}^{\infty}}+cC(\mathbf{f},g)
 \\
 & \le\theta\sup_{t\in\left[0,T\right]}\psi^{2}K\left(\eta_{n}\left(t\right)\right)+\frac{c((C(\mathbf{f},g))+(C(\mathbf{f},g))^2)}{\theta}\\
 & \le\theta\sup_{t\in\left[0,T\right]}\psi^{2}E_{n}\left(t\right)+\frac{c((C(\mathbf{f},g))+(C(\mathbf{f},g))^2)}{\theta}
\end{aligned}
\end{equation}
and 
\begin{equation}\label{eqn:I7}
\begin{aligned}
I_{7}\le\theta\left\Vert \eta\right\Vert _{L_{t}^{2}H_{x}^{2}}^{2}+\frac{c}{\theta}m^{2}
\end{aligned}
\end{equation}
From \eqref{eqn:En-first-semicircle}, \eqref{eqn:I1+I2},   \eqref{eqn:I3+I4+I5} \eqref{eqn:I7} and the fact that $\psi \le \phi$  we conclude that
 \begin{equation}
\sup_{t\in\left[0,T\right]}\phi^{2}E_{n}\left(t\right)\le c\left(\Omega,\kappa,c_{0},T\right)\cdot\left(C\left(\mathbf{f},g\right)^{2}+C\left(\mathbf{f},g\right)+m^{2}\right)
\end{equation}
and in particular 
\begin{equation}
E_{n}(T) \le c\left(\Omega,\kappa,c_{0},T\right)\cdot\left(C\left(\mathbf{f},g\right)^{2}+C\left(\mathbf{f},g\right)+m^{2}\right).
\end{equation}
This now proves  Proposition~\ref{prop: invariant-ball} by considering 
\begin{equation}\label{eqn:R}
R:=c\left(\Omega,\kappa,c_{0},T\right)\cdot\left(C\left(\mathbf{f},g\right)^{2}+C\left(\mathbf{f},g\right)+m^{2}\right).
\end{equation}

%\seb{MAKE A NEW PROPOSITION}

%\subsubsection{The proof of Proposition~\ref{prop:existence-dec-reg-per}}\label{subsubsection:prof}
%
%
%By Proposition~\ref{prop:Fn-has-fixed-point} and \eqref{eqn:energy-1} we find that
%\begin{equation}\label{formaln1}
%\int_{0}^{T}\int_{\Omega_{\mathcal{R}\delta\left(t\right)}}\left|\nabla\mathbf{u}_{n}\right|^{2}dxdt=\int_{0}^{T}\int_{\Omega_{\mathcal{R}\delta\left(t\right)}}\mathbf{f}_{n}\cdot\mathbf{u}_{n}dx+\int_{\omega}g_{n}\partial_{t}\eta_{n}dx'dt.
%\end{equation}
%Exactly as in \eqref{eqn:consequence-claim}we get  that 
%\begin{equation}\label{eqn:formaln3}
%\int_{0}^{T}\int_{\Omega_{\mathcal{R}\delta\left(t\right)}}\left|\mathbf{u}_{n}\right|^{2}+\left|\nabla\mathbf{u}_{n}\right|^{2}dxdt+\int_{\omega}\left|\partial_{t}\eta_{n}\right|^{2}dx'dt\le c\left(\Omega,\kappa\right)C\left(\mathbf{f},g\right)
%\end{equation}
%Essential in order to control $\left\langle K^{\prime}\left(\eta\right),\eta\right\rangle $ uniformly in $t$ are however estimates on $\eta_n$. 
%This is the crucial difference to setups where the geometry is uncoupled.

%

%\begin{proposition}\label{prop:Fn-has-fixed-point}
%The mapping $F_n$  from \eqref{eqn:F_n} has a fixed point. 
%\end{proposition}
\subsection{End of the proof of Proposition~\ref{prop:existence-dec-reg-per}} 
We aim to prove that the following mapping possesses a fixed-point:
\begin{equation}\label{eqn:F_n}
F_{n}:\mathbb{R}^{2\times n}\mapsto\mathbb{R}^{2\times n},F_{n}:\left(\mathbf{b}_{n}\left(0\right),\mathbf{b}_{n}^{\prime}\left(0\right)\right)\mapsto\left(\mathbf{b}_{n}\left(T\right),\mathbf{b}_{n}^{\prime}\left(T\right)\right).
\end{equation} This mapping  associates to any set of initial data the solution $\mathbf{b}_n$ of \eqref{eqn:xk} evaluated at $t=T$. 
We will apply Sch\"{a}ffer's Fixed Point Theorem.
 The mapping $F_n$ is \emph{continuous} since a
  solution of the above linear ODE depends continuously on the initial data.  The \emph{compactness} is automatically ensured because we work in a finite dimensional Banach space $\mathbb{R}^{2\times n}$.

%Thus, we only need to prove  that there exists an $R>0$ such that $F_{n}\left(\overline{B}\left(0,R\right)\right)\subseteq\left(\overline{B}\left(0,R\right)\right)$, where $\overline{B}\left(0,R \right)\subset\left(\mathbb{R}^{n}\right)^{3}$is the closed ball of radius $R$ centered in the origin. This will be achieved in Proposition~\ref{prop: invariant-ball}.
% Note that there exists a constant $c>0$ \[\left\Vert u_{n}\left(t,\cdot\right)\right\Vert _{L^{2}\left(\Omega_{\mathcal{R}\delta\left(0\right)}\right)}+\left\Vert \eta_{n}\left(t,\cdot\right)\right\Vert _{H_{0}^{2}\left(\omega\right)}+\left\Vert \partial_{t}\eta_{n}\left(t,\cdot\right)\right\Vert _{L^{2}\left(\omega\right)}\le cE_{n}\left(t\right)\quad\text{ for all }t\in\left[0,T\right]\] 
%Due to the linearization of the Koiter energy it follows easily that the  expression (recall \eqref{eqn:ansatz-u_n-eta_n})
%
%induces a norm on $\left(\mathbb{R}^{n}\right)^{3}$. However  on  a finite-dimensional vector space any two norms are equivalent. In particular \eqref{eqn:norm} is a norm  equivalent  to the usual Euclidean norm (up to a constant depending on $n$).  We employ now Proposition~\ref{prop: invariant-ball} which in particular shows the existence of $R$ in dependence of $f,g,T$ implies that $\left\Vert F\left(a_{0},b_{0},b_{1}\right)\right\Vert \le R$ for any  $\left\Vert \left(a_{0},b_{0},b_{1}\right)\right\Vert \le R $.  All-in-all, Brouwer's fixed point theorem can be applied and this ends now the proof.
In order to use Sch\"{a}ffer's fixed-point Theorem~\ref{thm:Schaeffer} it suffices to prove that the set
\begin{equation}\label{eqn:set-Schauder}
S:=\left\{ \left(\mathbf{b}_{n}\left(0\right),\mathbf{b}_{n}^{\prime}\left(0\right)\right):\left(\mathbf{b}_{n}\left(0\right),\mathbf{b}_{n}^{\prime}\left(0\right)\right)=\lambda\left(\mathbf{b}_{n}\left(T\right),\mathbf{b}_{n}^{\prime}\left(T\right)\right)\ \text{for some}\ \lambda\in\left[0,1\right]\right\} 
\end{equation}
is uniformly bounded with respect to $\lambda$.  Let us consider $\left(b_{0},b_{1}\right)\in S$.  We can exclude the trivial case $\lambda=0$ and assume $0<\lambda\le 1$.  Then it follows that 
\[
E_{n}\left(T\right)=\frac{1}{\lambda^{2}}E_{n}\left(0\right)\ge E_{n}\left(0\right).
\]
But now Proposition~\ref{prop: invariant-ball} provides a uniform bound on $S$ and a fixed-point is obtained. 
The bounds on $S$ are in particular bounds on the fixed-point and imply
\begin{equation}\label{eqn:formaln3}
\int_{0}^{T}\int_{\Omega_{\mathcal{R}\delta\left(t\right)}}\left|\mathbf{u}_{n}\right|^{2}+\left|\nabla\mathbf{u}_{n}\right|^{2}dxdt+\int_{\omega}\left|\partial_{t}\eta_{n}\right|^{2}dx'dt\le c\left(\Omega,\kappa\right)C\left(\mathbf{f},g\right).
\end{equation}
Further the energy equality is satisfied for the fixed-point value:
\begin{equation}\label{formaln1}
\int_{0}^{T}\int_{\Omega_{\mathcal{R}\delta\left(t\right)}}\left|\nabla\mathbf{u}_{n}\right|^{2}dxdt=\int_{0}^{T}\int_{\Omega_{\mathcal{R}\delta\left(t\right)}}\mathbf{f}_{n}\cdot\mathbf{u}_{n}dx+\int_{\omega}g_{n}\partial_{t}\eta_{n}dx'dt.
\end{equation}

Hence we are left to pass to the limit with $n\to \infty$.
From \eqref{eqn:M1+M2}
we may deduce that $\left\Vert \delta\right\Vert _{L^{\infty}\left(I\times\omega\right)}\le c\left(\omega\right)M_{1}$ we will choose 
\begin{equation}\label{eqn:M_1}
M_{1}\le\frac{3\kappa}{4c\left(\omega\right)}
\end{equation}
and will assume the data to be small enough such that $R$ is accordingly.
while for $M_2$, since we want the mapping $\mathbf{v}\mapsto \mathbf{u}$  to admit a fixed point,  we choose $M_2$ so that the bounds on $\mathbf{v,u}$ are  uniform w.r.t.  $\varepsilon$ . In accordance with \eqref{eqn:formaln3} we choose
 \begin{equation}\label{eqn:M_2}
M_{2}\le c\left(c_{0},\Omega,\kappa\right)C(\mathbf{f},g).
\end{equation}
 Now let $ t\in [0,T]$ be a fixed time and let us integrate \eqref{eqn:energy-1} on $[0,t]$.  By using  \eqref{eqn:consequence-claim} and \eqref{eqn:R} we get 
\begin{alignat*}{1}
E_{n}\left(t\right) & =E_{n}\left(0\right)+\int_{0}^{t}\int_{\Omega_{\mathcal{R}\delta\left(t\right)}}\mathbf{f}_{n}\cdot \mathbf{u}_{n}dxdt+\int_{0}^{t}\int_{\omega}g_{n}\partial_{t}\eta_{n}dx'ds\\
 & \le R+\int_{0}^{T}\int_{\Omega_{\mathcal{R}\delta\left(t\right)}}\left|\mathbf{f}_{n}\cdot \mathbf{u}_{n}\right|dxdt+\int_{0}^{T}\int_{\omega}\left|g_{n}\partial_{t}\eta_{n}\right|dx'ds\\
 & \le R+C\left(\omega, \kappa \right)C(\mathbf{f},g)
 \end{alignat*}
which then reads as 
\begin{equation}\label{eqn:uniform-periodic-time-estimate }
\sup_{t\in\left[0,T\right]}E_{n}(t)\le C\left(\Omega,\kappa,c_{0},T\right)\cdot\Big(\left(C\left(\mathbf{f},g\right)\right)^{2}+C\left(\mathbf{f},g\right)+m^{2}\Big).
\end{equation}
Now  from \eqref{eqn:uniform-periodic-time-estimate } it follows that there is $(\mathbf{u},\eta)$ for which 
\begin{equation}\label{eqn:limit-subsequence}
\begin{split}
\eta_{n}\to\eta & \text{\ weakly * \ in}\ L^{\infty}\left(0,T;H_{0}^{2}\left(\omega\right)\right)\\
\partial_{t}\eta_{n}\to\partial_{t}\eta & \text{\ weakly * \ in}\ L^{\infty}\left(0,T;L^{2}\left(\omega\right)\right)\\
\mathbf{u}_{n}\to \mathbf{u} & \text{\ weakly * \ in}\ V_{F,\text{per}}^{\mathcal{R}\delta}.
\end{split}
\end{equation}
By weak lower semicontinuity we have 
\begin{equation}\label{eqn:diff-small}
\int_{0}^{T}\int_{\Omega_{\mathcal{R}\delta\left(t\right)}}\left|\nabla \mathbf{u}\right|^{2}dxdt\le c(c_0, \kappa,\Omega) \int_{0}^{T}\int_{\Omega_{\mathcal{R}\delta\left(t\right)}}\mathbf{f}\cdot \mathbf{u}dx+\int_{0}^{T}\int_{\omega}g\partial_{t}\eta dx'dt.
\end{equation}
Using \eqref{eqn:limit-subsequence} and the  Remark~\ref{remark: Holder-cont} it follows that \[ \eta (0,\cdot) =\eta(T,\cdot).\]
Note that from  \eqref{eqn:limit-subsequence}  and $\text{tr}_{\mathcal{R}\delta}\mathbf{u}_{n}\circ \phi_{\mathcal{R}\delta}=\partial_{t}\eta_{n}\ebf$  by letting $n \to \infty$ we obtain $\text{tr}_{\mathcal{R}\delta}\mathbf{u}\circ \phi_{\mathcal{R}\delta}=\partial_{t}\eta \ebf.$
To end the proof,  we multiply \eqref{eqn:xk} by $\varphi\in C^{1}\left(0,T\right)\cap C^{0}[0,T],\ \varphi\left(0\right)=\varphi\left(T\right)$ and integrate by parts on $[0,T]$.  We get 
\begin{align}\label{eqn:density}
\begin{aligned}
&\int_{0}^{T}\int_{\Omega_{\mathcal{R}\delta\left(t\right)}}-\mathbf{u}_{n}\cdot\partial_{t}\left(\varphi\left(t\right)\mathbf{X}_{k}\right)dx+\nabla\mathbf{u}_{n}:\nabla\left(\varphi\left(t\right)\mathbf{X}_{k}\right)dxdt
\\
&\quad  +
\int_{0}^{T}\int_{\Omega_{\mathcal{R}\delta\left(t\right)}}\frac{1}{2}\left(\mathcal{R}\mathbf{v}\cdot\nabla\right)\mathbf{u}_{n}\cdot\left(\varphi\left(t\right)\mathbf{X}_{k}\right)-\frac{1}{2}\left(\mathcal{R}\mathbf{v}\cdot\nabla\right)\left(\varphi\left(t\right)\mathbf{X}_{k}\right)\cdot\mathbf{u}_{n}dxdt 
\\
&\quad  -
\int_{0}^{T}\int_{\omega}\frac{1}{2}\partial_{t}\eta_{n}\left(\varphi\left(t\right)X_{k}\right)\partial_{t}\mathcal{R}\delta\ dx'dt+\partial_{t}\eta_{n}\partial_{t}\left(\varphi\left(t\right)X_{k}\right)dx'+\int_{0}^{T}\left\langle K^{\prime}\left(\eta_{n}\right),X_{k}\right\rangle dt 
\\
& =
\int_{0}^{T}\int_{\Omega_{\mathcal{R}\delta\left(t\right)}}\mathbf{f}_{n}\cdot\left(\varphi\left(t\right)\mathbf{X}_{k}\right)dxdt+\int_{0}^{T}\int_{\omega}g_{n}\varphi\left(t\right)X_{k}dx'dt. 
\end{aligned}
\end{align}
for all $k\in \{1,...,n\}$
We can now take an arbitrary linear combination of the above and let $n\to \infty$ in \eqref{eqn:density}; note that the weak convergences from \eqref{eqn:limit-subsequence} are enough, since the problem is linear. Recall  Remark~\ref{remark:density-in-testfunctions} and the proof is complete. 
\end{proof}

\subsection{$\varepsilon$-regularized periodic solutions}
 This part follows the already developed existence theory for the respective Cauchy problem~\cite{Gr05,MC13,LR14,MS19}. Indeed, since the a-priori estimates for the here constructed time-periodic system are the same as for the Cauchy problem, the existence of solutions follows by the analysis provided there. For the convenience of the reader we repeat here the main steps that have to be performed in order to produce a weak solution. For the technical details we do however refer to the references given above.  

During this subsection we regain the coupling between fluid and shell, that is we prove that the mapping $(\delta, \mathbf{v}) \mapsto (\eta, \mathbf{u})$ has a fixed point. To this end we use Theorem~\ref{thm: Kakutain} which can be found in the Appendix.  
%Note that we need to use this set-valued fixed point Theorem since the uniqueness of the weak solution from Proposition~\ref{prop:existence-dec-reg-per} is not obvious. See also \cite[p. 47-48]{PhD11} for more details.  (Compare also with \cite{Gr05} where the mapping the $\delta \mapsto \eta$ is well-defined due to the structure of the \emph{plate} equation. Consequently Brouwer's fixed-point theorem suffices to obtain the fixed point in this case. )
However, note that the motion of the shell $\eta$ will still depend on $\varepsilon$, since we still need its regularity ensured by the operators introduced in Lemma~\ref{lm:regularizers}. The last step will therefore consist of  letting $\varepsilon \to 0$ however this procedure takes place in the next  section only. 

\begin{definition}\label{defn:eps-reg-soln}
We say that a couple $(\mathbf{u},\eta)=(\mathbf{u}_\varepsilon, \eta_\varepsilon ) \in V_{S, \text{per}}^{\mathcal{R}_{\varepsilon} \eta}$ is a weak time-periodic solution of the $\varepsilon$-regularized problem in the interval $I=(0,T)$ if  $\left\Vert \eta\right\Vert _{L^{\infty}\left(I\times\omega\right)}<\kappa$,   if
\begin{equation}\label{eq:eps-regularized}
\begin{split}
\int_{0}^{T}\int_{\Omega_{\mathcal{R}_{\varepsilon}\eta\left(t\right)}}-\mathbf{u}\cdot\partial_{t}\mathbf{q}+\nabla\mathbf{u}:\nabla\mathbf{q}+\frac{1}{2}\left(\mathcal{R}_{\varepsilon}\mathbf{u}\cdot\nabla\right)\mathbf{u}\cdot\mathbf{q}-\frac{1}{2}\left(\mathcal{R}_{\varepsilon}\mathbf{u}\cdot\nabla\right)\mathbf{q}\cdot\mathbf{u}dxdt & -\\
\int_{0}^{T}\int_{\omega}\frac{1}{2}\partial_{t}\eta\partial_{t}\mathcal{R}_{\varepsilon}\eta\xi+\partial_{t}\eta\partial_{t}\xi dx'+\left\langle K_{\varepsilon}^{\prime}\left(\eta\right),\xi\right\rangle dt & =\\
\int_{0}^{T}\int_{\Omega_{\mathcal{R}_{\varepsilon}\eta\left(t\right)}}\mathbf{f}\cdot\mathbf{q}dx+\int_{\omega}g\xi dx'dt & \forall\left(\mathbf{u},\xi\right)\in V_{T,\text{per}}^{\mathcal{R}_{\varepsilon}\eta}
\end{split}.
\end{equation}
\end{definition}
Then we aim to prove that 
\begin{proposition}\label{prop:eps-reg-per} Let $m\in \mathbb{R}$. 
There exists a constant 
\begin{equation}
C_{0}=C_{0}(c_{0},\Omega,\kappa,T)
\end{equation}
such that if 
 \begin{equation}\label{eqn:cond-f-g}
m^{2}+C\left(\mathbf{f},g\right)^{2}\le C_{0}
\end{equation}
 then  for all sufficiently small $\varepsilon >0$,  there exists at least one  weak time periodic solution $\left(\mathbf{u},\eta\right)=\left(\mathbf{u}_{\varepsilon},\eta_{\varepsilon}\right)$ for the $\varepsilon$-regularized problem such that 
 \begin{equation}\label{eqn:epsilon-zero}
 \int_{\omega}\eta\left(t,\cdot\right)dx'=m\text{ for all }t\in I
 \end{equation}
 Furthermore,  by denoting (as before)  
 \[
E%_{\varepsilon}
\left(t\right):=\int_{\Omega_{\mathcal{R}_{\varepsilon}\eta\left(t\right)}}\left|\mathbf{u}\right|^{2}dx+\int_{\omega}\left|\partial_{t}\eta\right|^{2}dx'+K\left(\eta(t)\right),\ t\in\left[0,T\right]
 \]  
% there exists a constant $C(\Omega, \kappa,c_0)$ such  that
it holds that 
 \begin{equation}\label{eq:estimates-eps-reg-soln}
\text{ess}\sup_{t\in I}E%_{\varepsilon}
\left(t\right)+\int_{0}^{T}\int_{\Omega_{\mathcal{R}_{\varepsilon}\eta}}\left|\nabla\mathbf{u}\right|^{2}dxdt\le C(\Omega,\kappa,c_{0},T)\cdot\Big(\left(C\left(\mathbf{f},g\right)\right)^{2}+C\left(\mathbf{f},g\right)+m^{2}\Big) 
\end{equation}
and 
\begin{equation}\label{eqn:eps-diff}
\int_{0}^{T}\int_{\Omega_{\mathcal{R}_{\varepsilon}\eta\left(t\right)}}\left|\nabla\mathbf{u}\right|^{2}dxdt\le\int_{0}^{T}\int_{\Omega_{\mathcal{R}_{\varepsilon}\eta\left(t\right)}}\mathbf{f}\cdot\mathbf{u}dxdt+\int_{\omega}g\cdot\partial_{t}\eta dx'dt.
\end{equation}
\end{proposition}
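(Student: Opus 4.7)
The plan is to obtain $(\mathbf{u}_\varepsilon, \eta_\varepsilon)$ as a fixed point of the (set-valued) solution operator associated to the decoupled regularized problem from Proposition~\ref{prop:existence-dec-reg-per}. Since uniqueness is not available at this level of regularity, the appropriate tool is the Kakutani-Glicksberg-Fan Theorem~\ref{thm: Kakutain} applied to a multifunction acting on a suitable convex compact set.

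Fix $\varepsilon > 0$. With $M_1, M_2$ as in \eqref{eqn:M_1}--\eqref{eqn:M_2}, I introduce the closed convex set
\[
K := \bigl\{ (\delta, \mathbf{v}) : \|\delta\|_{L^\infty(I; H^2_0(\omega)) \cap W^{1,\infty}(I; L^2(\omega))} \leq M_1,\ \|\mathbf{v}\|_{L^2(I; W^{1,2}(\Rbb^3))} \leq M_2,\ \textstyle\int_\omega \delta(t,\cdot)\,dx' \equiv m \bigr\}.
\]
Viewed inside $X := C(\bar I \times \bar\omega) \times L^2(I \times \Rbb^3)$, the set $K$ is relatively compact: the geometry factor compactifies in $C(\bar I \times \bar\omega)$ by interpolation/Aubin-Lions combined with Remark~\ref{remark: Holder-cont}, while the uniform support bound $\|\delta\|_\infty \leq \kappa$ and the Rellich compactness of $W^{1,2} \embed L^2$ yield compactness of the fluid factor in $L^2$ in space (full $L^2$-compactness suffices here since non-linearities will be paired with strongly converging mollified factors). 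I then define
\[
\Phi : K \to 2^K, \qquad \Phi(\delta, \mathbf{v}) := \bigl\{ (\eta, \mathbf{u}) : (\eta, \mathbf{u}) \text{ solves } \eqref{eq:dec-reg-per-wf} \text{ with data } (\delta, \mathbf{v}) \bigr\}.
\]

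By Proposition~\ref{prop:existence-dec-reg-per} the values $\Phi(\delta, \mathbf{v})$ are non-empty, and linearity of the decoupled system in the unknowns makes them convex. The essential point is the invariance $\Phi(K) \subset K$: estimate \eqref{eqn:energy-dec-estimate} furnishes
\[
\|\eta\|_{L^\infty(I; H^2_0) \cap W^{1,\infty}(I; L^2)} + \|\mathbf{u}\|_{L^2(I; W^{1,2})} \leq c(\Omega, \kappa, c_0, T)\bigl( C(\mathbf{f}, g)^2 + C(\mathbf{f}, g) + m^2 \bigr),
\]
and under the hypothesis $m^2 + C(\mathbf{f}, g)^2 \leq C_0$ the right-hand side is majorized by some $\varphi(C_0)$ with $\varphi(C_0) \to 0$ as $C_0 \to 0$. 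Choosing $C_0 = C_0(c_0, \Omega, \kappa, T)$ so small that $\varphi(C_0) \leq \min(M_1, M_2)$ gives $(\eta, \mathbf{u}) \in K$; the 2D embedding $H^2_0(\omega) \embed L^\infty(\omega)$ combined with \eqref{eqn:M_1} then enforces $\|\eta\|_{L^\infty(I \times \omega)} < \kappa$, so the $\varepsilon$-regularized geometry remains admissible and the mean-value identity $\int_\omega \eta\,dx' = m$ transfers directly from Proposition~\ref{prop:existence-dec-reg-per}.

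Upper semicontinuity of $\Phi$ is handled by a closed-graph argument. If $(\delta_n, \mathbf{v}_n) \to (\delta, \mathbf{v})$ in $X$ and $(\eta_n, \mathbf{u}_n) \in \Phi(\delta_n, \mathbf{v}_n)$ converges in $X$ to $(\eta, \mathbf{u})$, then since $\varepsilon$ is fixed, Lemma~\ref{lm:regularizers} yields $\mathcal{R}_\varepsilon \delta_n \to \mathcal{R}_\varepsilon \delta$ in $C^4(\bar I \times \bar\omega)$ and $\mathcal{R}_\varepsilon \mathbf{v}_n \to \mathcal{R}_\varepsilon \mathbf{v}$ smoothly, so the Piola transforms and diffeomorphisms $\phi_{\mathcal{R}_\varepsilon \delta_n}$ converge strongly. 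Using the uniform bounds inherited from $K$ I extract weak/weak-$\star$ subsequential limits of $(\eta_n, \mathbf{u}_n)$ in the energy spaces, and every non-linear term in \eqref{eq:dec-reg-per-wf} pairs a weakly converging unknown with a strongly converging regularized coefficient, so passage to the limit is immediate and $(\eta,\mathbf{u}) \in \Phi(\delta,\mathbf{v})$. Theorem~\ref{thm: Kakutain} then produces a fixed point $(\eta_\varepsilon, \mathbf{u}_\varepsilon) \in \Phi(\eta_\varepsilon, \mathbf{u}_\varepsilon)$, which is precisely a weak periodic solution of the $\varepsilon$-regularized problem; the estimates \eqref{eq:estimates-eps-reg-soln} and \eqref{eqn:eps-diff} are inherited from \eqref{eqn:energy-dec-estimate} and \eqref{eqn:periodic-estimate}. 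The main obstacle is the calibration of $C_0$ in the self-mapping step: the a-priori bound must be strong enough to return outputs matching the input smallness hypotheses $M_1, M_2$ and, crucially, to preserve $\|\eta\|_{L^\infty(I \times \omega)} < \kappa$ uniformly in $\varepsilon$ throughout the iteration, so that no self-intersection obstructs the construction.
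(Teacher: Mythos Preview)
Your overall strategy---Kakutani--Glicksberg--Fan applied to the set-valued solution map of the decoupled problem---matches the paper's. But there is a genuine gap in the compactness step.

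You assert that the ball $K$ itself is relatively compact in $X = C(\bar I\times\bar\omega)\times L^2(I\times\Rbb^3)$. This is false for the fluid factor: a bound in $L^2(I;W^{1,2}(\Rbb^3))$ does \emph{not} yield relative compactness in $L^2(I\times\Rbb^3)$, even with uniformly bounded spatial support, because you impose no time regularity on $\mathbf{v}$. The Rellich embedding $W^{1,2}\hookrightarrow L^2$ is purely spatial; without control on $\partial_t\mathbf{v}$ (or a fractional time estimate) Aubin--Lions cannot be invoked. A concrete obstruction: $\mathbf{v}_n(t,x)=e^{int}\phi(x)$ with fixed $\phi\in C_c^\infty$ sits in $K$ (for suitable $M_2$) but has no $L^2$-convergent subsequence. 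Your parenthetical remark that ``non-linearities will be paired with strongly converging mollified factors'' concerns the closed-graph verification, not the compactness hypothesis of Theorem~\ref{thm: Kakutain}, which requires $\Phi(K)$ to lie in a compact set.

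The paper does not try to make $K$ compact; it shows instead that the \emph{image} $\Phi(K)$ is relatively compact. Solutions $(\mathbf{u},\eta)$ of the decoupled linear problem inherit weak time regularity from the equation (a duality bound on $\partial_t\mathbf{u}$ coming from~\eqref{eq:dec-reg-per-wf}), and an Aubin--Lions-type compactness adapted to moving domains---the paper cites \cite[Proposition~3.8]{LR14}---then furnishes the required compactness of $\Phi(K)$ in $Z$. That is the ingredient missing from your argument.
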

\begin{proof}
%The proof follows the same lines as  \cite[Proposition 3.34]{LR14}. %This is why we only provide brief details when it comes to analogous aspects .
Let us  consider the space 
\[Z:= L_{\text{per}}^{2}\left(\mathbb{R};L^{2}\left(\mathbb{R}^{3}\right)\right) \times C_{\text{per}}^{0}\left(\mathbb{R};C^{0}\left(\overline{\omega}\right)\right)\
\] 
 and its  convex subset
  \[
D:=\left\{ \left(\mathbf{v},\delta\right)\in Z:\left\Vert \delta\right\Vert _{L_{t}^{\infty}L_{x}^{\infty}}\le M_{1},\left\Vert \mathbf{v}\right\Vert _{L_{t}^{2}L_{x}^{2}}\le M_{2},\ \int_{\omega}\delta\left(t,\cdot\right)d\equiv m\right\} 
   \]
where $M_1$ and $M_2$ are given by \eqref{eqn:M_1} and \eqref{eqn:M_2}.
%and where we denote for simplicity. \[Z^{*}:=\left(L^{\infty}\left(I;H_{0}^{2}\left(\omega\right)\right)\cap W^{1,\infty}\left(I;L^{2}\left(\omega\right)\right)\right)\times W^{1,2}\left(I;\mathbb{R}^{3}\right)\] 
We define 
\[
F:Z\mapsto\mathcal{P}(Z),\quad F\left(\mathbf{v},\delta\right):=\left\{ \left(\mathbf{u},\eta\right):\left(\mathbf{u},\eta\right)\in V_{T,\text{per}}^{\mathcal{R}\delta}\text{ is a solution for}\ \eqref{eq:eps-regularized}\right\} .
\] 
Here we extend  for almost every $t\in I$ the function   $\mathbf{u}(t,\cdot)$ to $\mathbb{R}^3$ by zero.
We would like to have 
\[
F : D \subset Z \mapsto \mathcal{P}(D)
\]
and therefore from \eqref{eqn:energy-dec-estimate}, \eqref{eqn:M_1}, \eqref{eqn:M_2} we need to impose the  smallness condition \eqref{eqn:epsilon-zero}.
Further we check the assumptions of  Theorem~\ref{thm: Kakutain}:
 \begin{enumerate}[(i)]
 \item For all $(\mathbf{v},\delta)\in D$ the set $F(\mathbf{v},\delta)$ is \emph{non-empty} (due to Proposition~\ref{prop:existence-dec-reg-per}), 
  \emph{convex} (since the problem is linearized). The fact that $F(D) \subset D$ follows by imposing the  \emph{smallness} condition \eqref{eqn:cond-f-g} on the forces. 
  More precisely, this is due to the estimate~\eqref{eqn:uniform-periodic-time-estimate }.

  \item $F(D)$ is \emph{relatively compact} in $Z$: here we need  to employ a compactness result, for example the one from \cite[Proposition 3.8]{LR14}.  Please observe, that this can indeed be applied since after establishing the  Proposition~\ref{prop: invariant-ball} we are able to select a set of initial data from a uniformly bounded (by $R$, see \eqref{eqn:R}) set.
Note that  for sufficiently small $\varepsilon$ the regularizing operators ensure according to Lemma~\ref{lm:regularizers} that
   \[\mathcal{R}:\left\{ \delta\in C\left(\overline{I}\times\omega\right):\delta\left(0,\cdot\right)=\delta\left(T,\cdot\right)\right\} \mapsto C^{3}\left(\overline{I}\times\omega\right)\hookrightarrow\hookrightarrow C^{2}\left(\overline{I}\times\omega\right).\] 

   \item $F$ has a \emph{closed graph} since if $(\mathbf{v}_{n},\ \delta_{n}) \to (\mathbf{v},\delta)$ and $\left(\mathbf{v}_{n},\delta_{n}\right)\in F\left(\mathbf{v}_{n},\delta_{n}\right)$ with $\left(\mathbf{u}_{n},\eta_{n}\right)\to\left(\mathbf{u},\eta\right)$, then  $(\eta,\mathbf{u}) \in F(\delta,\mathbf{v})$. The latter is an easy exercise especially since the problem is linearized.  To prove that $\eta(0, \cdot)=\eta(T,\cdot)$ it suffices to note that $\eta_{n}(0, \cdot)=\eta_{n} (T, \cdot)$ and we get strong convergence for every time due to Remark~\ref{remark: Holder-cont}.
 \end{enumerate}
Therefore the assumptions of the Theorem~\ref{thm: Kakutain} are fulfilled and we can guarantee the existence of a pair $(\eta, \mathbf{u}) \in D$ for which $(\eta, \mathbf{u}) \in F(\eta,\mathbf{u})$. Once the fixed-point is established, the estimates \eqref{eq:estimates-eps-reg-soln} and  \eqref{eqn:eps-diff} follow  from \eqref{eqn:periodic-estimate} and \eqref{eqn:energy-dec-estimate}.
\end{proof}
\begin{remark}
We would like to point out that the same reasoning would have worked  at the discrete (Galerkin) level,  by replacing $\delta$ with $\delta_n$--a discrete but already  regularized version. We refer to   \cite[Section 4.1]{BS21} where this was successfully performed.
\end{remark} 
 
% 
%The set $F(\delta, v)$ is non-empty due to the Proposition~\ref{prop:existence-dec-reg-per}.
%All we need to do is to check that Theorem ~\ref{thm: Kakutain} can be applied. The set $F(\delta,v)$ is convex since our problem is linear. The same argument and the weak lower semicontinuity of the norms that where used ensure that  $F(\delta,v)$ is closed in $Z$. The fact that $F(\delta,v)$ is relatively compact in $Z$ relies on  Proposition~\ref{prop:compactness}. The proof that $F$ has a closed-graph  can be taken almost literally from \cite{LR14}Consequently there exists a pair $(\eta, u) \in D$ for which $\left(\eta,u\right)\in F\left(\eta,u\right)$ and the proof is complete.

% \seb{ Please note that this does not mean that $R$ is necessarily small. If the reference geometry $\Omega_{0}=B_a(0)$, then $\kappa= a$ which would allow for respective large values of $f,g$.
% }

\subsection{$\varepsilon \to 0$ limit and the proof of Theorem~\ref{thm:main}}
\begin{proof}
Our proof follows the same lines as  in  \cite{LR14} and \cite{MS19}.\\
Recalling the definition of $\varepsilon$-regularized periodic solutions from \eqref{eq:eps-regularized}, we know that for all sufficiently small $\varepsilon>0$ it holds that:
\begin{equation}\label{eqn:u_eps}
\begin{aligned}
\int_{0}^{T}\int_{\Omega_{\mathcal{R}_{\varepsilon}\eta_{\varepsilon}\left(t\right)}}-\mathbf{u}_{\varepsilon}\cdot\partial_{t}\mathbf{q}_{\varepsilon}dx+\nabla\mathbf{u}_{\varepsilon}:\nabla\mathbf{q}_{\varepsilon}+\frac{1}{2}\left(\mathcal{R}_{\varepsilon}\mathbf{u}_{\varepsilon}\cdot\nabla\right)\mathbf{u}_{\varepsilon}\cdot\mathbf{q}_{\varepsilon}-\frac{1}{2}\int_{\Omega_{\mathcal{R}_{\varepsilon}\eta\left(t\right)}}\left(\mathcal{R}_{\varepsilon}\mathbf{u}_{\varepsilon}\cdot\nabla\right)\mathbf{q}\cdot\mathbf{u}_{\varepsilon}dxdt+\\
\\
\int_{0}^{T}\int_{\omega}-\partial_{t}\eta_{\varepsilon}\partial_{t}\xi_{\varepsilon}dx'+\left\langle K_{\varepsilon}^{\prime}\left(\eta_{\varepsilon}\right),\xi_{\varepsilon}\right\rangle dt=\int_{0}^{T}\int_{\Omega_{\mathcal{R}_{\varepsilon}\eta\left(t\right)}}\mathbf{f}\cdot\mathbf{q}_{\varepsilon}dx+\int_{\omega}g\xi_{\varepsilon}dx'dt\quad\forall\left(\mathbf{q}_{\varepsilon},\xi_{\varepsilon}\right)\in V_{T,\text{per}}^{\mathcal{R}_{\varepsilon}\eta_{\varepsilon}}
\end{aligned}
\end{equation}
We  obtain the existence of a pair $(\mathbf{u},\eta,)$ for which 
\begin{equation}
\begin{aligned}
\eta_{\varepsilon},\mathcal{R}_{\varepsilon}\eta_{\varepsilon}\to\eta & \quad \text{uniformly and weakly}^{*}\ \text{in}\ L^{\infty}\left(I;H_{0}^{2}\left(\omega\right)\right)\\
\partial_{t}\eta_{\varepsilon},\mathcal{R}_{\varepsilon}\partial_{t}\eta_{\varepsilon}\to\partial_{t}\eta &\quad \text{weakly}^{*}\ \text{in}\ L^{\infty}\left(I;L^{2}\left(\omega\right)\right)\\
\mathbf{u}_{\varepsilon}\to\mathbf{u} & \quad \text{weakly}^{*}\ \text{in}\ L^{\infty}\left(I;L^{2}\left(\mathbb{R}^{3}\right)\right)\\
\nabla\mathbf{u}_{\varepsilon}\to\nabla\mathbf{u} & \quad \text{weakly in}\ L^{2}\left(I;L^{2}\left(\mathbb{R}^{3}\right)\right)
\end{aligned}
\end{equation}
for some subsequences which we choose not to relabel in order to keep the notation light.
Note that this suffices  to ensure that $\eta(0, \cdot)=\eta(T,\cdot)$ in $\omega$ and $\left\Vert \eta\right\Vert _{L^{\infty}\left(I\times\omega\right)}<\kappa$.
This time however  our problem is \emph{nonlinear} and therefore strong convergence is needed in order to pass to the limit in \eqref{eqn:u_eps}.  Following now \cite[Section 4]{Gr05} or \cite[Lemma 6.3]{MS19} we get that 
  \begin{equation} 
\left(\mathbf{u}_{\varepsilon},\partial_{t}\eta_{\varepsilon}\right)\to\left(\mathbf{u},\partial_{t}\eta\right)\quad\text{in}\ L^{2}\left(I;L^{2}\left(\mathbb{R}^{3}\right)\right)\times L^{2}\left(I;L^{2}\left(\omega\right)\right).
\end{equation}
In particular, (by interpolating $L^{4}$ between $L^{2}$ and $L^{5}$) one obtains that \begin{equation}
  \mathbf{u}_{\varepsilon},  \mathcal{R}_{\varepsilon} \mathbf{u}_{\varepsilon}\to \mathbf{u}\quad\text{in}\ L^{2}\left(I;L^{4}\left(\mathbb{R}^{3}\right)\right).
\end{equation}

This now proves Theorem~\ref{thm:main}.
\end{proof}
\section*{Acknowledgments}
C. M. and S. S. thank the support of the Primus research programme PRIMUS/19/SCI/01  and of the program GJ19-11707Y of the Czech National Grant Agency (GA\v{C}R).  They also thank the SVV program SVV-2020-260583 of the Faculty of Mathematics and Physics of  Charles University.

Moreover, S.S. thanks for the support of  the University Centre UNCE/SCI/023 of Charles University.

\bibliographystyle{apalike}

\section{Appendix}\label{section:appendix}
\subsection{Reynolds' transport theorem}
We  recall the classical  Reynolds' Transport Theorem, which in our context (see Subsection~\ref{subsection:domain}) reads as follows
\begin{theorem}\label{thm:Reynolds}
It holds that
 \[\frac{d}{dt}\int_{\Omega_{\eta\left(t\right)}}g\left(t,x\right)dx=\int_{\Omega_{\eta\left(t\right)}}\partial_{t}g\left(t,x\right)dx+\int_{\partial\Omega_{\eta\left(t\right)}}\partial_{t}\eta\circ\psi_{\eta}^{-1}\mathbf{e}_3\cdot \nu_{\eta} gdx'\] whenever all the terms that appear are well-defined.
\end{theorem}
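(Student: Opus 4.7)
The strategy is the classical one: pull the time-dependent integral back to the fixed reference domain $\Omega$ via the family of diffeomorphisms $\phi_{\eta(t)}\colon \Omega\to \Omega_{\eta(t)}$ constructed in Subsection~\ref{subsection:domain}, differentiate under the integral sign on the fixed domain, then push forward again and identify the resulting bulk divergence term via Stokes. Concretely, set $J(t,y):=\det\nabla_y \phi_{\eta(t)}(y)$ and, using the change of variables,
\[
\int_{\Omega_{\eta(t)}} g(t,x)\,dx \;=\; \int_{\Omega} g(t,\phi_{\eta(t)}(y))\,J(t,y)\,dy.
\]
Since $\eta\in C^k_0(\omega)$ for sufficiently large $k$ makes $\phi_\eta$ a $C^k$-diffeomorphism, this integrand is differentiable in $t$, and the Leibniz rule yields three terms: $\partial_t g\circ\phi_\eta\cdot J$, $\nabla g\circ\phi_\eta\cdot\partial_t\phi_\eta\cdot J$, and $g\circ\phi_\eta\cdot \partial_t J$.

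The first term pushes forward directly to $\int_{\Omega_{\eta(t)}}\partial_t g\,dx$, which is precisely the first term on the right-hand side of the claim. For the remaining two terms I would introduce the Eulerian velocity of the domain $v(t,x):=\partial_t\phi_{\eta(t)}(\phi_{\eta(t)}^{-1}(x))$ and use Jacobi's formula $\partial_t J = J\,(\mathrm{div}\,v)\circ\phi_{\eta}$. After pushing forward, the sum of these two terms becomes
\[
\int_{\Omega_{\eta(t)}} \bigl(\nabla g\cdot v + g\,\mathrm{div}\,v\bigr)\,dx \;=\; \int_{\Omega_{\eta(t)}} \mathrm{div}(g\,v)\,dx.
\]
An application of the divergence theorem then transforms this into a surface integral over $\partial\Omega_{\eta(t)}$ involving $g\, v\cdot\tilde\nu$, where $\tilde\nu$ is the unit outer normal.

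It remains to identify this boundary term with the one stated. On the fixed part $\Gamma\subset\partial\Omega_{\eta(t)}$ one has $\phi_{\eta(t)}=\mathrm{id}$ so $v\equiv 0$, and the contribution vanishes. On the moving part, parametrised by $\phi_{\eta(t)}(x',0)=(x',\eta(t,x'))$ for $x'\in\omega$, a direct computation gives $\partial_t\phi_{\eta(t)}(x',0)=\partial_t\eta(t,x')\,\mathbf{e}_3$, hence $v\circ\phi_{\eta(t)}=\partial_t\eta\,\mathbf{e}_3$, i.e.\ $v=\partial_t\eta\circ\phi_{\eta}^{-1}\,\mathbf{e}_3$ on the moving boundary. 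The surface element satisfies $\tilde\nu\,dS = \nu_\eta\,dx'$ with the unnormalised normal $\nu_\eta=\partial_1\phi_\eta\times\partial_2\phi_\eta$ defined in \eqref{eqn:tensor-curvature}'s vicinity, because the area form on the graph $\{(x',\eta(x'))\}$ is exactly $|\nu_\eta|\,dx'$ and $\tilde\nu=\nu_\eta/|\nu_\eta|$. This produces precisely the boundary term $\int_{\partial\Omega_{\eta(t)}}\partial_t\eta\circ\phi_{\eta}^{-1}\,\mathbf{e}_3\cdot\nu_\eta\,g\,dx'$ of the statement.

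The only real technical point is justifying the interchange of $\partial_t$ and the integral, and the use of Jacobi's formula, under the regularity one has; for smooth $g$ and $\eta$ with $\|\eta\|_\infty<\kappa$ this is immediate from the $C^k$-diffeomorphism property \eqref{eq:domain}, but one should note the result extends by density/approximation to the regularity classes in which it is used later in the paper. The careful bookkeeping of the unnormalised versus unit normal, so that the factor $|\nu_\eta|$ cancels exactly and yields the clean formula with $dx'$, is the one step where it is easy to drop a Jacobian, and I would write it out explicitly.
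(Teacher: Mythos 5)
Your proof is correct: the paper gives no proof of Theorem~\ref{thm:Reynolds} at all (it is simply recalled in the appendix as the classical Reynolds transport theorem), and your pullback argument — change of variables to the fixed domain, Jacobi's formula for $\partial_t J$, the divergence theorem, and identification of the boundary flux — is precisely the standard derivation the paper implicitly relies on. You also handle the two paper-specific points correctly: the boundary velocity vanishes on $\Gamma$ because $\phi_{\eta(t)}|_{\Gamma}=\mathrm{id}$ for all $t$, and working with the unnormalised normal $\nu_\eta=\partial_1\phi_\eta\times\partial_2\phi_\eta$ (for the flat plate one even has $\mathbf{e}_3\cdot\nu_\eta=1$ on the moving part) is exactly what makes the boundary term appear with the measure $dx'$ as stated, with $\psi_\eta$ in the statement being the same map as $\phi_\eta$.
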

 
  The latter result will be used intensively during this work.
\subsection{Poincar\'{e}'s inequality}
\begin{theorem}\cite[Theorem 13.15]{Le17} \label{thm:poincare} If $\Omega \subset \mathbb{R}^{N}$ is an open set that lies between two hyperplanes which are at distance $d$, $m \in \mathbb{N}$ and $1 \le p < \infty$, then there is a constant $c=c(m,N,p)>0$ such that \[\int_{\Omega}\left|\nabla^{k}\mathbf{u}\right|^{p}dx\le c\frac{d^{p\left(m-k\right)}}{p\left(m-k\right)}\int_{\Omega}\left|\nabla^{m}\mathbf{u}\right|^{p}dx\text{ for all }\mathbf{u}\in W_{0}^{m,p}(\Omega),0\le k\le m-1.\] 
\end{theorem}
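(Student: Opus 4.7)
\textbf{Proof strategy for Theorem~\ref{thm:poincare}.} The plan is to reduce the problem to a one-dimensional weighted estimate on slices parallel to the slab's short direction, and then iterate on the derivative order. First, by density of $C_c^{\infty}(\Omega)$ in $W_0^{m,p}(\Omega)$, it suffices to establish the bound for $\mathbf{u} \in C_c^{\infty}(\Omega)$; extending by zero, we may also regard $\mathbf{u} \in C_c^{\infty}(\mathbb{R}^N)$. After a rigid motion (which preserves all norms and the geometric quantity $d$) I will assume the two bounding hyperplanes are $\{x_N=0\}$ and $\{x_N=d\}$, so that $\Omega \subset \mathbb{R}^{N-1} \times (0,d)$. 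For a.e.\ $x' \in \mathbb{R}^{N-1}$, the slice $v(t) := \mathbf{u}(x',t)$ is smooth and vanishes, together with all its derivatives, at $t=0$.

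The base case is $k=0$. Taylor's formula with integral remainder, applied to $v$ on $[0,x_N]$ with vanishing Taylor polynomial at $0$, yields
\[
v(x_N) \;=\; \frac{1}{(m-1)!}\int_0^{x_N}(x_N-t)^{m-1}\,v^{(m)}(t)\,dt.
\]
Hölder's inequality with exponents $p$ and $p' = p/(p-1)$ gives a pointwise bound of the form
\[
|v(x_N)|^p \;\le\; \frac{c(m,p)\,x_N^{mp-1}}{1}\int_0^d |v^{(m)}(t)|^p\,dt,
\]
where the exponent $mp-1$ comes from $(m-1)p' + (p-1)$ rearranged via $(m-1)p'+1 = (mp-1)/(p-1)$. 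Integrating in $x_N\in(0,d)$ produces the factor $d^{mp}/(mp)$, and an outer Fubini integration over $x'$, together with $|v^{(m)}(t)|\le |\nabla^m \mathbf{u}(x',t)|$, gives
\[
\int_\Omega |\mathbf{u}|^p\,dx \;\le\; c(m,N,p)\,\frac{d^{pm}}{pm}\int_\Omega |\nabla^m \mathbf{u}|^p\,dx,
\]
which is exactly the claim at $k=0$.

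For a general $0 \le k \le m-1$, the idea is simply to apply the base case to each partial derivative of order $k$ with the height exponent shifted by $k$. Specifically, for any multi-index $\alpha$ with $|\alpha|=k$, the operator $\partial^\alpha$ preserves $C_c^\infty(\Omega)$ and extends by continuity to a bounded map $W_0^{m,p}(\Omega) \to W_0^{m-k,p}(\Omega)$, so $\partial^\alpha \mathbf{u} \in W_0^{m-k,p}(\Omega)$. Applying the $k=0$ inequality to $\partial^\alpha \mathbf{u}$ with $m$ replaced by $m-k$ yields
\[
\int_\Omega |\partial^\alpha \mathbf{u}|^p\,dx \;\le\; c(m-k,N,p)\,\frac{d^{p(m-k)}}{p(m-k)}\int_\Omega |\nabla^{m-k}\partial^\alpha \mathbf{u}|^p\,dx \;\le\; c\,\frac{d^{p(m-k)}}{p(m-k)}\int_\Omega |\nabla^m \mathbf{u}|^p\,dx,
\]
and summing over the finitely many $\alpha$ with $|\alpha|=k$ (absorbing the combinatorial factor into $c$) finishes the proof.

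There is no serious obstacle here; the two small points to watch are (i) verifying that all derivatives of the slice $v$ vanish at $t=0$, which follows at once from $\mathbf{u}\in C_c^\infty(\Omega)$ and the slab geometry, and (ii) tracking the precise $1/(p(m-k))$ factor, which emerges naturally as the primitive of $x_N^{mp-1}$ over $(0,d)$ in the base-case Hölder estimate.
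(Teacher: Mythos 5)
Your proposal is correct: the paper itself gives no proof of this statement (it is quoted verbatim from Leoni \cite[Theorem 13.15]{Le17}), and your reduction to $C_c^\infty(\Omega)$, the slicing in the $x_N$-direction with Taylor's formula plus H\"older and Fubini, and the iteration to general $k$ by applying the base case to $\partial^\alpha \mathbf{u}\in W_0^{m-k,p}(\Omega)$ is essentially the standard textbook argument behind that citation, with the exact factor $d^{p(m-k)}/(p(m-k))$ coming out of $\int_0^d x_N^{(m-k)p-1}\,dx_N$ as you indicate. The only cosmetic point is $p=1$, where $p'=\infty$ and the H\"older step should be replaced by the trivial bound $(x_N-t)^{m-1}\le x_N^{m-1}$, which still gives the exponent $mp-1=m-1$ and the same conclusion.
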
 
\subsection{Korn's identity}
\begin{lemma}\cite[Lemma A.5]{LR14}\label{lm:Korn} In the context of the introduction it holds that \[\int_{\Omega_{\eta\left(t\right)}}\left(\nabla \mathbf{u}\right)^{T}:\nabla \mathbf{\xi} dx=0\] for all functions $\mathbf{\xi}$ with $\text{tr}_{\eta} \mathbf{\xi}\circ\phi_\eta=b \nu$ for some scalar function $b$; in particular for $\mathbf{\xi}=\mathbf{u}$.

\end{lemma}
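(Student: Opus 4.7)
\medskip

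The identity is a purely algebraic/integration-by-parts statement relying on two ingredients: the incompressibility $\divergence \mathbf{u}=0$ in $\Omega_{\eta(t)}$, and the structural form of $\xi$ on $\partial\Omega_{\eta(t)}$ (parallel to the normal direction $\mathbf{e}_3$ on the moving part $M_{\eta(t)}$, vanishing on the fixed part $\Gamma$). My plan is to rewrite the integrand as a divergence modulo a term proportional to $\nabla(\divergence\mathbf{u})$, kill the latter with incompressibility, and then show that the resulting boundary integral vanishes thanks to the prescribed trace.

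\textbf{Step 1 (algebraic rewriting).} I would write
\[
(\nabla\mathbf{u})^T:\nabla\xi \;=\; \sum_{i,j}\partial_i u_j\,\partial_j\xi_i \;=\; \sum_{j}\partial_j\!\Big(\sum_i \partial_i u_j\, \xi_i\Big) \;-\; \sum_i \xi_i\, \partial_i\!\Big(\sum_j \partial_j u_j\Big),
\]
so that, denoting $W_j:=\sum_i \partial_i u_j\,\xi_i$ (the $j$-th component of $(\nabla\mathbf{u})^T\xi$),
\[
(\nabla\mathbf{u})^T:\nabla\xi \;=\; \diverg W \;-\; \xi\cdot\nabla(\diverg\mathbf{u}).
\]
Since $\diverg\mathbf{u}=0$ in $\Omega_{\eta(t)}$ the second term vanishes. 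Applying the divergence theorem to $\diverg W$ reduces the claim to
\[
\int_{\partial\Omega_{\eta(t)}}\!\sum_{i,j}\partial_i u_j\,\xi_i\,\nu_{\eta,j}\,dS \;=\;0.
\]

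\textbf{Step 2 (boundary analysis).} On $\Gamma$ the integrand is zero because $\xi=0$ there (the trace condition forces $b=0$ on the fixed part). On the moving part, interpreting $\nu$ in the hypothesis as the reference outer normal $\mathbf{e}_3$ of $M$, we have $\xi|_{M_{\eta(t)}}=b\,\mathbf{e}_3$, so $\xi_i=b\,\delta_{i3}$ and the integrand becomes
\[
b\sum_{j}\partial_3 u_j\,\nu_{\eta,j}\;=\;b\,(\partial_3\mathbf{u})\cdot\nu_\eta.
\]
It therefore suffices to show $(\partial_3\mathbf{u})\cdot\nu_\eta=0$ on $M_{\eta(t)}$. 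Using the explicit unnormalized expression $\nu_\eta=(-\partial_1\eta,-\partial_2\eta,1)$ we obtain
\[
(\partial_3\mathbf{u})\cdot\nu_\eta \;=\; -\partial_3 u_1\,\partial_1\eta\;-\;\partial_3 u_2\,\partial_2\eta\;+\;\partial_3 u_3.
\]

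\textbf{Step 3 (closing via $\diverg\mathbf{u}=0$).} The coupling $u_1=u_2=0$ on the moving boundary yields, by differentiating $u_i(x',\eta(x'))=0$ tangentially for $i=1,2$,
\[
\partial_j u_i\;=\;-\partial_3 u_i\,\partial_j\eta \qquad\text{on }M_{\eta(t)},\ i,j\in\{1,2\}.
\]
Substituting $-\partial_3 u_1\,\partial_1\eta=\partial_1 u_1$ and $-\partial_3 u_2\,\partial_2\eta=\partial_2 u_2$ gives
\[
(\partial_3\mathbf{u})\cdot\nu_\eta\;=\;\partial_1 u_1+\partial_2 u_2+\partial_3 u_3\;=\;\diverg\mathbf{u}\;=\;0,
\]
so the boundary integral vanishes, completing the proof. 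The special case $\xi=\mathbf{u}$ is covered because $\mathbf{u}|_{M_{\eta(t)}}=(\partial_t\eta)\mathbf{e}_3$ is of the required form and $\mathbf{u}=0$ on $\Gamma$.

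\textbf{Main obstacle.} The only subtlety is the correct interpretation of the trace condition $\operatorname{tr}_\eta\xi=b\nu$: it must be read as $\xi$ being aligned with $\mathbf{e}_3$ on $M_{\eta(t)}$ (and vanishing on $\Gamma$), consistent with the physical coupling. Once this is understood, Step 3—using the chain rule at the graph $\{x_3=\eta(x')\}$ to convert normal derivatives of $u_1,u_2$ into tangential ones—is the essential identity that reveals the boundary trace of $\diverg\mathbf{u}$ and yields cancellation.
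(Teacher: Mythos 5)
Your computation is correct and is essentially the standard argument behind this lemma: the paper itself offers no proof but cites \cite{LR14} (Lemma A.5), and the substance there is exactly your rewriting $(\nabla \mathbf{u})^{T}:\nabla\xi=\mathrm{div}\bigl((\nabla\mathbf{u})^{T}\xi\bigr)-\xi\cdot\nabla(\mathrm{div}\,\mathbf{u})$ plus the observation that the boundary term dies because the traces are aligned with $\mathbf{e}_{3}$, the fields vanish on $\Gamma$, and tangential differentiation of $u_{1}=u_{2}=0$ on the graph converts $(\partial_{3}\mathbf{u})\cdot\nu_{\eta}$ into $\mathrm{div}\,\mathbf{u}=0$; your reading of $\nu$ as the reference normal $\mathbf{e}_{3}$ rather than $\nu_{\eta}$ is the intended one, as the case $\xi=\mathbf{u}$ confirms (and $\xi=0$ on $\Gamma$ comes from the function spaces, not from the trace condition on $\omega$). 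The one caveat is regularity: at the level at which the lemma is invoked ($\mathbf{u},\xi\in H^{1}$ only), $\nabla\mathbf{u}$ has no boundary trace and $\nabla(\mathrm{div}\,\mathbf{u})$ is not a function, so your Steps 1--3 are formal; the statement at $H^{1}$ regularity is obtained by running your computation for smooth approximating fields with the same divergence and trace structure and passing to the limit, which is how it is handled in \cite{LR14} and requires no new idea beyond what you wrote.
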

\subsection{The Sch\"{a}ffer's/ Leray-Schauder's  fixed point Theorem}
\begin{theorem}\cite[Theorem 4]{Ev10}\label{thm:Schaeffer}
Let $X$ be a Banach space and   $A :  X \mapsto X$ be a continuous and compact mapping such that the set \[\left\{ x\in X:x=\lambda Ax\ \text{for some}\ \lambda\in\left[0,1\right]\right\} \]
is bounded. Then $A$ possesses a fixed point.
\end{theorem}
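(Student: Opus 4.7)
The plan is to reduce to Schauder's fixed point theorem by confining $A$ to a closed ball via a radial retraction. By hypothesis, there exists a constant $M > 0$ such that any solution of $x = \lambda A x$ with $\lambda \in [0,1]$ satisfies $\|x\| \le M$. I would fix a strictly larger radius $M' > M$ (for instance $M' := M + 1$) and work inside the closed ball $\overline{B} := \{x \in X : \|x\| \le M'\}$, which is a nonempty, closed, bounded, convex subset of the Banach space $X$.

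Define the continuous radial retraction $r : X \to \overline{B}$ by $r(x) := x$ when $\|x\| \le M'$ and $r(x) := M' x/\|x\|$ when $\|x\| > M'$. The composition $\tilde A := r \circ A$ is then a continuous self-map of $\overline{B}$. It is also compact: $A$ sends bounded sets to relatively compact sets, and $r$ is continuous, so $r \circ A$ inherits the compactness property. Schauder's fixed point theorem (a continuous compact self-map of a nonempty closed bounded convex subset of a Banach space has a fixed point) then furnishes a point $x_0 \in \overline{B}$ with $\tilde A(x_0) = x_0$.

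The remaining step is to promote $x_0$ to a genuine fixed point of $A$. If $\|A(x_0)\| \le M'$, then $r(A(x_0)) = A(x_0)$, whence $x_0 = A(x_0)$ and we are done. Otherwise $\|A(x_0)\| > M'$, in which case $x_0 = r(A(x_0)) = \lambda A(x_0)$ for $\lambda := M'/\|A(x_0)\| \in (0,1)$, and in particular $\|x_0\| = M'$. But this places $x_0$ in the set from the hypothesis, forcing $\|x_0\| \le M < M'$, contradicting $\|x_0\| = M'$. Hence this alternative cannot occur and $x_0$ is a fixed point of $A$.

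The main obstacle is really just the classical nonlinear input, namely Schauder's fixed point theorem itself; everything surrounding it is soft. A small but essential subtlety is the strict choice $M' > M$: if one tried to work on the closed ball of radius exactly $M$, the hypothesis would only return $\|x_0\| \le M$, which is consistent with the boundary case $\|x_0\| = M$, and no contradiction could be extracted to rule out the exceptional alternative $\lambda < 1$. Passing to a strictly larger radius converts the hypothesis's ``$\le$'' into a genuine contradiction with the equality $\|x_0\| = M'$ produced by the retraction, and this is the only place where any care is needed.
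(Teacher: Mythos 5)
Your argument is correct. The paper does not prove Theorem~\ref{thm:Schaeffer} at all --- it is quoted from the cited reference (Evans) --- and your Schauder-plus-radial-retraction argument is exactly the classical proof given there: the only cosmetic difference is that you enlarge the radius to $M'=M+1$ to get the strict inequality needed for the contradiction, whereas the standard write-up simply chooses $M$ strictly larger than the a-priori bound from the start; both handle the boundary case identically, and your treatment of the compactness of $r\circ A$ and of the alternative $\|A(x_0)\|>M'$ (which places $x_0$ in the bounded set with $\lambda\in(0,1)$) is complete.
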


%\subsection{The extension operator}
%For  a continuous mapping $\delta : \omega \mapsto \mathbb{R}$  we denote  \begin{equation}\label{eqn:def-tau(eta)}
%\tau\left(\delta\right):=\begin{cases}
%\frac{\kappa}{\kappa-\left\Vert \delta\right\Vert _{L^{\infty}\left(\omega\right)}} & \left\Vert \delta\right\Vert _{L^{\infty}\left(\omega\right)}<\kappa\\
%\infty & \text{else}
%\end{cases}. 
%  \end{equation}
%Following \cite[Lemma 2.6]{LR14} an extension operator  from $\Omega$ to $\Omega_{\eta}$ and the corresponding Lebesgue and Sobolev spaces is constructed where $\eta \in H^{2}(\omega)$. Since this is a H\"{o}lder continuous function but not a Lipschitz continuous one, there is a small loss in the order of integrability. %However when $\eta \in C^{\infty}(\omega)$ we can overwhelm this loss.
%We have
%\begin{lemma}\label{lm:extension}
%For any $1<p \le \infty$ and $\eta \in W^{1,\infty}(\omega)$ with $\left\Vert \eta\right\Vert _{L^{\infty}\left(\omega\right)}<\kappa$ the linear mapping $\text{Ext} : v \mapsto v \circ \psi_{\eta}$ is continuous from $L^{p}(\Omega_{\eta})$ to $L^{p} (\Omega)$ and from $W^{1,p}(\Omega_{\eta})$ to $W^{1,p}(\Omega)$. Analogously when $\psi_{\eta}$ is replaced by $\psi_{\eta} ^{-1}$. The continuity constants depend on $\Omega$, $p$ and a bound for $\left\Vert \eta\right\Vert _{H^{2}\left(\omega\right)}$ and $\tau (\eta)$.
%\begin{proof}
%It follows from the mentioned reference, keeping in mind that $\eta$ is Lipschitz continuous in this case.
%\end{proof}
%\end{lemma}

\subsection{The Piola mapping}
We follow \cite[Remark 2.5]{LR14} and state the following
\begin{lemma}\label{lm:Piola}
 For any $\delta \in C^{2}(\omega)$ with $\left\Vert \delta\right\Vert _{L^{\infty}\left(\omega\right)}<\kappa$ and $\varphi: \omega \mapsto\mathbb{R}^{3}$ we define the    \emph{Piola transform} of $\varphi$ under $\psi_\delta$ by \[\mathcal{J}_{\delta}\varphi:=\left(d\psi_{\delta}\left(\det d\psi_{\delta}\right)^{-1}\varphi\right)\circ\psi_{\delta}^{-1}.\]
Then  $\mathcal{J}_{\delta}$ defines an isomorphism between the Lebesgue and Sobolev spaces on $\Omega$ and the corresponding ones on $\Omega_\delta$ which  moreover preserves the zero boundary values and the divergence-free condition.
\end{lemma}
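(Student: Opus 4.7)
The plan is to unpack the definition $\mathcal{J}_\delta\varphi = (d\psi_\delta (\det d\psi_\delta)^{-1}\varphi)\circ \psi_\delta^{-1}$ in terms of the smoothness of $\psi_\delta$ and then check each of the three claims (Lebesgue/Sobolev isomorphism, preservation of zero boundary values, preservation of solenoidality) individually. The hypothesis $\delta\in C^2(\omega)$ together with $\|\delta\|_{L^\infty}<\kappa$ and assumption \eqref{eq:domain} implies that $\psi_\delta:\Omega\to\Omega_\delta$ is a $C^2$-diffeomorphism; in particular $d\psi_\delta$ and $(d\psi_\delta)^{-1}$ are uniformly bounded and $\det d\psi_\delta$ is bounded away from zero. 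These uniform bounds will be used throughout.

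First I would verify the $L^p$ and $W^{1,p}$ isomorphism properties. Writing $\mathcal{J}_\delta\varphi = (A\varphi)\circ\psi_\delta^{-1}$ with $A(y):=d\psi_\delta(y)(\det d\psi_\delta(y))^{-1}$, a direct change of variables using $\det d\psi_\delta^{-1} = (\det d\psi_\delta)^{-1}\circ\psi_\delta^{-1}$ shows
\[
\|\mathcal{J}_\delta\varphi\|_{L^p(\Omega_\delta)}^p = \int_{\Omega}|A(y)\varphi(y)|^p \det d\psi_\delta(y)\,dy,
\]
which is equivalent to $\|\varphi\|_{L^p(\Omega)}^p$ by the uniform bounds on $A$ and $\det d\psi_\delta$. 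For the Sobolev regularity, one differentiates using the chain rule: the derivatives of $\mathcal{J}_\delta\varphi$ involve first derivatives of $\varphi$ multiplied by $A\circ\psi_\delta^{-1}$ and $d\psi_\delta^{-1}$, plus a term with first derivatives of $A$ times $\varphi$. Since $A\in C^1$ (because $\delta\in C^2$ ensures $d\psi_\delta\in C^1$ with positive lower bound on its determinant), all factors are bounded, and the same computation applied to the inverse Piola transform $\mathcal{J}_\delta^{-1}$ (given by the analogous formula with $\psi_\delta^{-1}$ in place of $\psi_\delta$) yields the reverse estimate.

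Next I would handle the zero boundary value preservation. Since $\psi_\delta$ is a homeomorphism sending $\partial\Omega$ to $\partial\Omega_\delta$, and since $\mathcal{J}_\delta\varphi(x) = A(\psi_\delta^{-1}(x))\varphi(\psi_\delta^{-1}(x))$ depends pointwise linearly on $\varphi$ via the invertible matrix $A$, the trace $\operatorname{tr}_{\partial\Omega_\delta}\mathcal{J}_\delta\varphi$ vanishes if and only if $\operatorname{tr}_{\partial\Omega}\varphi$ vanishes; this extends to $W^{1,p}_0$-functions by density.

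The main point — and the only step that requires more than chain-rule bookkeeping — is the divergence identity. I would establish the pointwise Piola identity
\[
\operatorname{div}\mathcal{J}_\delta\varphi = \bigl((\det d\psi_\delta)^{-1}\operatorname{div}\varphi\bigr)\circ\psi_\delta^{-1},
\]
from which the divergence-free preservation is immediate. The derivation is classical: writing $\mathcal{J}_\delta\varphi = F\circ\psi_\delta^{-1}$ with $F_i = (\operatorname{cof} d\psi_\delta)_{ij}\varphi_j/\det d\psi_\delta \cdot \det d\psi_\delta$ (using $d\psi_\delta(\det d\psi_\delta)^{-1} = (\operatorname{cof} d\psi_\delta)^{\mathsf{T}}/\det d\psi_\delta \cdot \text{something}$, more cleanly $A\varphi$ with $A=(\operatorname{cof} d\psi_\delta^{-\mathsf{T}})$ after rewriting), one uses the chain rule for $\operatorname{div}$ under composition together with the cofactor divergence identity
\[
\sum_j \partial_j(\operatorname{cof} d\psi_\delta)_{ij} = 0 \quad\text{in }\Omega,
\]
which holds for any $C^2$ map and is the heart of the argument. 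Combining this with Jacobi's formula for the derivative of the determinant yields cancellation of all terms except $(\det d\psi_\delta)^{-1}\operatorname{div}\varphi$ after composition with $\psi_\delta^{-1}$. I anticipate this cofactor-divergence step is the main obstacle — everything else is elementary — and I would reference \cite[Remark~2.5]{LR14} or a standard continuum-mechanics text for the verification of the cofactor identity.
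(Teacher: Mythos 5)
Your proposal is correct and follows essentially the same route as the paper: the paper's proof consists precisely of the pointwise transformation identity for derivatives and the resulting divergence identity $\operatorname{div}\mathcal{J}_{\delta}\varphi\circ\psi_{\delta}=(\det d\psi_{\delta})^{-1}\operatorname{div}\varphi$ (following \cite[Remark~2.5]{LR14}), which is exactly the identity you derive via the classical cofactor (Piola) identity, while the change-of-variables bookkeeping for the Lebesgue/Sobolev isomorphism and the boundary values is left implicit there just as it is routine in your write-up. Your intermediate rewriting of $A=d\psi_\delta(\det d\psi_\delta)^{-1}$ in terms of cofactors is stated a bit sloppily, but the target identity and the cofactor-divergence step $\sum_j\partial_j(\operatorname{cof} d\psi_\delta)_{ij}=0$ are the correct core of the argument.
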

\begin{proof} It can be shown that 
$\partial_{y}\mathcal{J}_{\delta}\varphi\circ\psi_{\delta}=\left(\det d\psi_{\delta}\right)^{-1}\partial_{i}\varphi\ $ and thus $\text{div}\mathcal{J}_{\delta}\varphi\circ\psi_{\delta}=\left(\det d\psi_{\delta}\right)^{-1}\text{div}\varphi$.
\end{proof}

\subsection{The trace operator}
In order to  rigorously justify the traces  of  functions  defined on $\Omega_{\eta(t)}$ we use 
\begin{lemma}\label{lm:trace} \cite[Corollary 2.9]{LR14} If $1 < p \le \infty$ and $\eta \in H^{2}(\omega)$ with $\left\Vert \eta\right\Vert _{L^{\infty}\left(\omega\right)}<\kappa$, then for any $r \in (1,p)$ the mapping \[\text{tr}_{\eta}:W^{1,p}\left(\Omega_{\eta}\right)\mapsto W^{1-\frac{1}{r},r}\left(\omega\right),\quad\text{tr}_{\eta}\left(v\right):=\left(v\circ\psi_{\eta}\right)_{|\omega}\] is well defined and continuous, with continuity constant depending on $\Omega, r, p$ and a bound for $\left\Vert \eta\right\Vert _{H^{2}\left(\omega\right)}$ and $\tau(\eta)$.
\end{lemma}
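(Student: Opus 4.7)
The approach is to reduce everything to the classical trace theorem on the fixed smooth reference domain $\Omega$ by pulling back through $\phi_{\eta}$. Since $\Omega$ is $C^{4}$ and $\omega=M\cap\partial\Omega$ is a flat piece of $\partial\Omega$, the standard trace $W^{1,r}(\Omega)\to W^{1-1/r,r}(\omega)$ is available with a constant depending only on $\Omega$ and $r$. The entire difficulty is to control the loss of integrability caused by composing with $\phi_{\eta}$, since $\eta$ is only $H^{2}$ and not Lipschitz (in $2$D, $H^{2}(\omega)\hookrightarrow C^{0,\alpha}$ for $\alpha<1$ but not $\alpha=1$). This loss is exactly why one ends up in $W^{1-1/r,r}$ with $r<p$ rather than $r=p$.

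First I would define the pullback $\tilde v:=v\circ\phi_{\eta}$ on $\Omega$, where $\phi_{\eta}$ is the extension described in Subsection~\ref{subsection:domain}. The condition $\|\eta\|_{L^{\infty}(\omega)}<\kappa$ together with the construction of $\phi_{\eta}$ ensures $\phi_{\eta}:\Omega\to\Omega_{\eta}$ is a bi-Lipschitz-type homeomorphism with Jacobian bounded from below by a quantity $\tau(\eta)>0$ (this is essentially the injectivity/non-degeneracy radius), and with $\nabla\phi_{\eta}\in H^{1}(\Omega)$ because it depends on $\nabla\eta\in H^{1}(\omega)$. By the chain rule $\nabla\tilde v=(\nabla v)\circ\phi_{\eta}\cdot d\phi_{\eta}$, so for any $r\in(1,p)$ and any $s$ with $\tfrac{1}{r}=\tfrac{1}{p}+\tfrac{1}{s}$, Hölder's inequality yields
\[
\|\nabla\tilde v\|_{L^{r}(\Omega)}\le C(\tau(\eta))\,\|\nabla v\|_{L^{p}(\Omega_{\eta})}\,\|d\phi_{\eta}\|_{L^{s}(\Omega)},
\]
where $C(\tau(\eta))$ absorbs the change-of-variables Jacobian. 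Because $\omega\subset\mathbb{R}^{2}$, the Sobolev embedding $H^{1}(\omega)\hookrightarrow L^{s}(\omega)$ holds for every finite $s$, with embedding constant depending on $s$; extending this to $\Omega$ via the construction of $\phi_{\eta}$ gives $\|d\phi_{\eta}\|_{L^{s}(\Omega)}\le C(s,\Omega)\,(1+\|\eta\|_{H^{2}(\omega)})$. Hence $\tilde v\in W^{1,r}(\Omega)$ with a norm controlled by $\|v\|_{W^{1,p}(\Omega_{\eta})}$, a bound on $\|\eta\|_{H^{2}}$, and $\tau(\eta)$.

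Next I would apply the classical trace theorem on the $C^{4}$ domain $\Omega$, obtaining $\tilde v|_{\partial\Omega}\in W^{1-1/r,r}(\partial\Omega)$ with the standard constant depending only on $\Omega$ and $r$. Restricting to the flat face $M=\omega\times\{0\}$ and using the identification $\phi_{\eta}|_{\omega\times\{0\}}(x',0)=(x',\eta(x'))$ gives exactly $\text{tr}_{\eta}(v)=(v\circ\phi_{\eta})|_{\omega}=\tilde v|_{\omega}$, which therefore lies in $W^{1-1/r,r}(\omega)$ with the asserted continuity. An approximation argument by smooth functions (which exist thanks to the $C^{0}$-diffeomorphism property, so $W^{1,p}(\Omega_{\eta})\cap C^{\infty}(\overline{\Omega_{\eta}})$ is dense) justifies the computation for rough $v$.

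The main obstacle is the bookkeeping around the two competing parameters: one must choose $s$ large enough that $\tfrac{1}{p}+\tfrac{1}{s}<1$, which automatically forces $r<p$, yet $s$ enters the embedding constant for $H^{1}\hookrightarrow L^{s}$, so letting $r\to p^{-}$ blows up the constant. The statement nevertheless allows \emph{any} $r\in(1,p)$, so this is not an actual problem; but the proof must display that dependence explicitly, together with the dependence of the Jacobian estimates on $\tau(\eta)$ and on $\|\eta\|_{H^{2}(\omega)}$ (and not on higher norms of $\eta$), which is the content of the final statement.
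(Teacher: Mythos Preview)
The paper does not supply its own proof of this lemma; it is quoted directly from \cite[Corollary~2.9]{LR14} and stated in the Appendix without argument. Your proposal is correct and is in fact the standard route used in \cite{LR14}: pull back to the fixed reference domain via $\phi_{\eta}$, use H\"older to trade the $L^{p}$ bound on $\nabla v$ against an $L^{s}$ bound on $d\phi_{\eta}$ (available for every finite $s$ because $\nabla\eta\in H^{1}(\omega)$ and $\omega\subset\mathbb{R}^{2}$), and then invoke the classical trace theorem on the smooth domain $\Omega$. Your discussion of why the loss $r<p$ is unavoidable and how the constants depend on $\|\eta\|_{H^{2}}$ and $\tau(\eta)$ is exactly the point of the result.
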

  
\subsection{A divergence-free extension operator}
\begin{proposition}\cite[Proposition 3.3]{MS19}\label{prop:solenoidal-extension-operator}
Let  $\eta \in L^{\infty}(I; W^{1,2}(\omega))$ be  with $\left\Vert \eta\right\Vert _{L^{\infty}\left(I\times\omega\right)}<\alpha<\kappa$ and denote $S_{\alpha}:=\left\{ x:\text{dist}\left(x,\partial\Omega\right)<\alpha\right\} $.
 Then  there exists linear and bounded operators 
 \[
\mathcal{M}_{\eta}:L^{1}\left(\omega\right)\mapsto\mathbb{R},\quad\mathcal{F}_{\eta}:\left\{ \xi\in L^{1}\left(I;W^{1,1}\left(\omega\right)\right):\int_{\omega}\xi dx'=0\right\} \mapsto L^{1}\left(I;W_{\text{div}}^{1,1}\left(\Omega\cup S_{\alpha}\right)\right)
 \]
such that for all $\xi \in L^\infty(I; W^{2,2}(\omega))\cap W^{1,\infty}(I;L^2(\omega))$ we have that
\begin{equation}
\begin{aligned}\mathcal{F}_{\eta}\left(\mathcal{M}_{\eta}\left(\xi\right)\right)\in L^{\infty}\left(I;L^{2}\left(\Omega_{\eta}\right)\right)\cap L^{2}\left(I;W_{\text{div}}^{1,2}\left(\Omega_{\eta}\right)\right)\\
\mathcal{M_{\eta}}\left(\xi\right)\in L^{\infty}\left(I;W^{2,2}\left(\omega\right)\right)\cap W^{1,\infty}\left(I;L^{2}\left(\omega\right)\right)\\
\text{tr}_{\eta}\mathcal{F}_{\eta}\left(\mathcal{M_{\eta}}\left(\xi\right)\right)=\mathcal{M_{\eta}}\left(\xi\right)\mathbf{\nu}\\
\mathcal{F}_{\eta}\left(\mathcal{M}\left(\xi\right)\right)=0\quad\text{on}\ I\times\Omega\setminus S_{\alpha}
\end{aligned}
\end{equation}
  and furthermore for all $p \in (1,\infty)$ and $q\in (1, \infty]$ we have
  
   \[
\left\Vert \mathcal{F}_{\eta}\left(\mathcal{M}\left(\xi\right)\right)\right\Vert _{L^{q}\left(I;W^{1,p}\left(\Omega\cup S_{\alpha}\right)\right)}\lesssim\left\Vert \xi\right\Vert _{L^{q}\left(I;W^{1,p}\left(\omega\right)\right)}+\left\Vert \xi\nabla\eta\right\Vert _{L^{q}\left(I;L^{p}\left(\omega\right)\right)}
   \] and
    \[
 \left\Vert \partial_{t}\mathcal{F}_{\eta}\left(\mathcal{M}_{\eta}\left(\xi\right)\right)\right\Vert _{L^{q}\left(I;L^{p}\left(\Omega\cup S_{\alpha}\right)\right)}\lesssim\left\Vert \partial_{t}\xi\right\Vert _{L^{q}\left(I;L^{p}\left(\omega\right)\right)}+\left\Vert \xi\partial_{t}\eta\right\Vert _{L^{q}\left(I;L^{p}\left(\omega\right)\right)}.
    \]
 The continuity constant depends only on $\Omega$, $p$, and $\kappa$.
 \end{proposition}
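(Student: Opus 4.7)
The plan is to construct $\mathcal{M}_\eta$ and $\mathcal{F}_\eta$ separately. For $\mathcal{M}_\eta$, I would fix a bump function $\psi\in C_0^\infty(\omega)$ with $\int_\omega \psi\,dx'=1$ (the one already singled out in \eqref{eqn:psi-bump-function-test-eta}) and set $\mathcal{M}_\eta(\xi):=\xi-\bigl(\int_\omega\xi\,dx'\bigr)\psi$. This is linear and bounded on every $W^{k,p}(\omega)$, inherits the time-regularity of $\xi$, and always outputs a zero-mean function. In fact $\mathcal{M}_\eta$ is independent of $\eta$; the subscript is kept for notational symmetry with $\mathcal{F}_\eta$, and the advertised regularity $\mathcal{M}_\eta(\xi)\in L^\infty(I;W^{2,2})\cap W^{1,\infty}(I;L^2)$ follows automatically for admissible $\xi$.

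The construction of $\mathcal{F}_\eta$ is the substantial part. The strategy is to first build a divergence-free extension on the reference (undeformed) geometry and then transport it to $\Omega_\eta$ via the Piola transform of Lemma~\ref{lm:Piola}, which preserves both the solenoidality and the normal-trace condition. On the reference side, for a zero-mean $\zeta:\omega\to\mathbb{R}$, I would first produce a naive normal extension $\mathbf{v}_0(x',z):=\chi(z)\zeta(x')\mathbf{e}_3$, where $\chi\in C_0^\infty((-\alpha,\alpha))$ with $\chi(0)=1$, so $\mathbf{v}_0$ has trace $\zeta\mathbf{e}_3$ on $M$ and support in $S_\alpha$. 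Its divergence $\chi'(z)\zeta(x')$ integrates to zero thanks to $\int_\omega\zeta\,dx'=0$ together with $\chi(\pm\alpha)=0$, so a Bogovski\u{\i}-type operator on $\Omega\cup S_\alpha$ produces $\mathbf{w}$ with $\mathrm{div}\,\mathbf{w}=-\chi'\zeta$ and $\mathbf{w}=0$ on the remaining boundary. Setting $\hat{\mathbf{v}}:=\mathbf{v}_0+\mathbf{w}$ yields the divergence-free reference extension, and $\mathcal{F}_\eta(\zeta):=\mathcal{J}_\eta\hat{\mathbf{v}}$ is then the sought extension in the deformed geometry (after extending $\phi_\eta$ to $\Omega\cup S_\alpha$, which is possible since $\alpha<\kappa$ keeps $\phi_\eta$ a diffeomorphism of the enlarged strip).

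For the quantitative spatial estimate, the $L^p$-boundedness of Bogovski\u{\i} on the Lipschitz domain $\Omega\cup S_\alpha$ controls $\|\hat{\mathbf{v}}\|_{W^{1,p}}\lesssim\|\zeta\|_{W^{1,p}}$, and the chain-rule identities for the Piola transform convert this into the advertised bound $\|\mathcal{F}_\eta(\zeta)\|_{W^{1,p}}\lesssim\|\zeta\|_{W^{1,p}}+\|\zeta\nabla\eta\|_{L^p}$, the extra factor $\nabla\eta$ appearing through $\nabla\phi_\eta^{-1}$. For the time derivative, since $\chi$ is fixed and Bogovski\u{\i} commutes with $\partial_t$, one has $\partial_t\hat{\mathbf{v}}=\chi\,\partial_t\zeta\,\mathbf{e}_3+\mathrm{Bog}(-\chi'\partial_t\zeta)$, which supplies the $\|\partial_t\zeta\|_{L^p}$ contribution; the additional $\|\zeta\,\partial_t\eta\|_{L^p}$ term comes from differentiating the Piola factors $d\phi_\eta(\det d\phi_\eta)^{-1}$ and $\phi_\eta^{-1}$ in time.

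The principal obstacle I expect is precisely this time-regularity step. Differentiating the composition $\hat{\mathbf{v}}\circ\phi_\eta^{-1}$ produces terms of the form $(\nabla\hat{\mathbf{v}})\cdot(\partial_t\phi_\eta^{-1})$ and $\partial_t\!\bigl(d\phi_\eta(\det d\phi_\eta)^{-1}\bigr)\hat{\mathbf{v}}$ that must be bounded pointwise by $|\partial_t\eta|$ multiplied by zero-order terms in $\zeta$, without picking up any derivative of $\zeta$. The constraint $\|\eta\|_{L^\infty}<\alpha<\kappa$ keeps $\det d\phi_\eta$ uniformly away from zero so that all factors in the Piola formula are bounded with constants depending only on $\Omega$, $p$, $\kappa$, and the calculation closes at the stated level. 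Verifying that the endpoint space $L^q$ with $q=\infty$ is reached requires checking that every step above is a pointwise-in-time estimate and that Bogovski\u{\i} is applied in $x$ only; this is the main bookkeeping task.
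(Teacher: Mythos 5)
The paper itself does not prove this proposition: it is quoted verbatim from \cite{MS19} (Proposition 3.3), so your attempt has to be measured against that construction. Your plan has two genuine problems. First, the divergence correction as you set it up destroys the very trace you need. A Bogovski\u{\i} operator on $\Omega\cup S_\alpha$ returns $\mathbf{w}\in W^{1,p}_0(\Omega\cup S_\alpha)$, i.e.\ $\mathbf{w}$ vanishes only on $\partial(\Omega\cup S_\alpha)$; the interface $M=\omega\times\{0\}$ lies in the \emph{interior} of $\Omega\cup S_\alpha$, so in general $\mathbf{w}|_M\neq 0$, hence $\hat{\mathbf{v}}=\mathbf{v}_0+\mathbf{w}$ no longer has trace $\zeta\mathbf{e}_3$ on $M$, and after the Piola pushforward the required identity $\mathrm{tr}_\eta\,\mathcal{F}_\eta(\zeta)=\zeta\mathbf{e}_3$ fails; the support property $\mathcal{F}_\eta=0$ on $\Omega\setminus S_\alpha$ fails as well, since the correction is not confined to the strip. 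Tellingly, your compatibility argument only uses $\chi(\pm\alpha)=0$ and never actually needs $\int_\omega\zeta\,dx'=0$, which signals that the structure of the problem has been lost: the zero-mean hypothesis is exactly what is needed if you instead solve the divergence equation separately on the two components of $(\Omega\cup S_\alpha)\setminus\overline{M}$ (with zero boundary data on \emph{all} of their boundaries, including $M$), because on each component $\int\chi'\zeta\,dx$ equals a nonzero multiple of $\int_\omega\zeta\,dx'$. That repair restores the trace and the support property.

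The second and more serious gap is the one you flag yourself and then assert away: the Piola transport cannot deliver the stated estimates under the stated hypotheses. Writing $\phi_\eta(x',z)=(x',z+\beta(z)\eta(x'))$ and $\mathcal{J}_\eta\hat{\mathbf{v}}=\bigl(A\hat{\mathbf{v}}\bigr)\circ\phi_\eta^{-1}$ with $A=d\phi_\eta(\det d\phi_\eta)^{-1}$, the gradient contains $(\nabla A)\hat{\mathbf{v}}\sim|\nabla^2\eta|\,|\hat{\mathbf{v}}|$ and $A(\nabla\hat{\mathbf{v}})\,d\phi_\eta^{-1}\sim|\nabla\hat{\mathbf{v}}|\,|\nabla\eta|$, and the time derivative contains $\nabla(A\hat{\mathbf{v}})\cdot\partial_t\phi_\eta^{-1}\sim|\nabla\hat{\mathbf{v}}|\,|\partial_t\eta|$. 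None of these is controlled by $\Vert\zeta\Vert_{W^{1,p}}+\Vert\zeta\nabla\eta\Vert_{L^p}$, respectively $\Vert\partial_t\zeta\Vert_{L^p}+\Vert\zeta\partial_t\eta\Vert_{L^p}$, with constants depending only on $\Omega,p,\kappa$: the proposition assumes only $\eta\in L^\infty(I;W^{1,2}(\omega))$, so $\nabla^2\eta$ need not even exist and $\nabla\eta$ is merely $L^2$ in space. This is precisely why the cited construction is carried out directly in the current configuration rather than by pullback: one extends $\xi$ along the vertical fibers with a cutoff shifted by the displacement, essentially $\xi(x')\chi(z-\eta(x'))\mathbf{e}_3$, so that the only $\eta$-dependence is of zero order in $\xi$ (the chain rule produces exactly $\xi\chi'\nabla\eta$ and $\xi\chi'\partial_t\eta$ and no derivative of $\xi$ against $\eta$), the divergence error $\xi\chi'(z-\eta)$ is supported away from the interface where $\chi\equiv 1$, and a Bogovski\u{\i} correction there (compatible thanks to $\int_\omega\xi\,dx'=0$) completes the extension. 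Unless you replace the Piola step by such a direct Eulerian construction, or strengthen the hypotheses on $\eta$ beyond what the proposition grants, your argument does not close.
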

  
\subsection{ Sobolev embedding on moving domains}
\begin{lemma}\cite[Corollary 2.10]{LR14}
\label{lm:Sobolev-embedding}
If $1 < p<3$ and $\eta \in H^{2}(\omega)$ with $\left\Vert \eta\right\Vert _{L^{\infty}\left(\omega\right)}<\kappa$ then \[W^{1,p}\left(\Omega_{\eta}\right)\hookrightarrow L^{p^{*}}\left(\Omega_{\eta}\right)\ \text{with} \   p^{*}:=\frac{3p}{3-p}\] and the embedding constant depends on $\Omega$, $p$ and a bound for $\left\Vert \eta\right\Vert _{H^{2}\left(\omega\right)}$ and $\tau (\eta)$.
\end{lemma}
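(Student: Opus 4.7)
The plan is to reduce the claim to the classical Sobolev embedding on the fixed Lipschitz domain $\Omega$ via the spatial diffeomorphism $\phi_\eta:\Omega\to\Omega_\eta$ introduced in Subsection~\ref{subsection:domain}. The key point to prepare is that $\phi_\eta$ can be arranged to be bi-Lipschitz, with quantitative constants controlled by $\|\eta\|_{H^2(\omega)}$ and the non-degeneracy parameter $\tau(\eta)$ which measures how far $\eta$ is from producing self-intersection (so that $\det\,\mathrm{d}\phi_\eta$ stays bounded away from $0$).

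First I would construct the diffeomorphism. By the discussion around \eqref{eq:domain}, when $\eta\in C^k_0(\omega)$ with $k\ge 1$ and $\|\eta\|_\infty\le\kappa$, $\phi_\eta$ is a $C^k$ diffeomorphism. To handle $\eta\in H^2(\omega)$, I would approximate $\eta$ by a sequence $\eta_n\in C^\infty_0(\omega)$ with $\eta_n\to\eta$ in $H^2(\omega)$ and $\|\eta_n\|_\infty<\kappa$, and use the explicit tubular neighborhood extension of $\eta_n$ normal to $\partial\Omega$ from Subsection~\ref{subsection:domain}. Together with the $C^4$ regularity of $\partial\Omega$, this yields that $\nabla\phi_{\eta_n}$ and $(\nabla\phi_{\eta_n})^{-1}$ are bounded in terms of $\|\eta_n\|_{H^2}$ and $\tau(\eta_n)$; here the two-dimensional Sobolev embedding $H^2(\omega)\hookrightarrow C^{0,\alpha}(\omega)$ enters, enabling uniform Hölder (hence $L^\infty$) control of $\nabla\phi_{\eta_n}$ on the extension collar once combined with the smooth cutoff used in the construction.

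Next I would pull back. For $v\in W^{1,p}(\Omega_\eta)$, set $\tilde v:=v\circ\phi_\eta$. Classical change of variables together with the bi-Lipschitz bounds yields
\begin{equation*}
\|\tilde v\|_{W^{1,p}(\Omega)}\le C_1\bigl(\Omega,p,\|\eta\|_{H^2(\omega)},\tau(\eta)\bigr)\,\|v\|_{W^{1,p}(\Omega_\eta)}.
\end{equation*}
Since $\Omega$ is a bounded $C^4$ (in particular Lipschitz) domain in $\mathbb{R}^3$, the standard Sobolev embedding gives $\|\tilde v\|_{L^{p^*}(\Omega)}\le C_2(\Omega,p)\|\tilde v\|_{W^{1,p}(\Omega)}$ with $p^*=3p/(3-p)$. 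Pushing forward by $\phi_\eta^{-1}$ (again using the same Jacobian bounds) produces
\begin{equation*}
\|v\|_{L^{p^*}(\Omega_\eta)}\le C_3\bigl(\Omega,p,\|\eta\|_{H^2(\omega)},\tau(\eta)\bigr)\,\|v\|_{W^{1,p}(\Omega_\eta)},
\end{equation*}
which is the claimed embedding. Density of smooth functions in $W^{1,p}(\Omega_\eta)$ (valid because $\partial\Omega_\eta$ is Lipschitz, as $\eta\in H^2(\omega)\hookrightarrow C^{0,\alpha}(\omega)$ only gives Hölder, but the bi-Lipschitz $\phi_\eta$ transfers density from $\Omega$ to $\Omega_\eta$) justifies the change of variables.

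The main obstacle is the first step, namely producing a diffeomorphism with bi-Lipschitz bounds controlled purely by $\|\eta\|_{H^2(\omega)}$ and $\tau(\eta)$: in two spatial dimensions $H^2\not\hookrightarrow W^{1,\infty}$, so one cannot directly read off $\|\nabla\phi_\eta\|_{L^\infty}$ from $\|\eta\|_{H^2}$ on $\omega$. The rescue is that the extension of $\eta$ into the bulk across the $C^4$ boundary can be performed via a smoothing tubular-neighborhood construction, producing a map whose derivative on the collar depends on $\|\eta\|_{H^2(\omega)}$ through a trace-type inequality rather than through $\|\nabla\eta\|_{L^\infty}$; this is the content of the proof in \cite[Sec.~2]{LR14}, which I would reproduce and then pass to the limit $\eta_n\to\eta$ using lower semicontinuity of the $L^{p^*}$ norm under the pulled-back convergence.
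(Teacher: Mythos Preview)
The paper does not prove this lemma; it is stated in the appendix with a bare citation to \cite[Corollary~2.10]{LR14}. Your strategy via a bi-Lipschitz change of variables is the natural first attempt, but the obstacle you flag in your last paragraph is a genuine gap and your proposed rescue does not close it. Since $\omega\subset\Rbb^{2}$ and $H^{2}(\omega)\not\hookrightarrow W^{1,\infty}(\omega)$, the boundary $\partial\Omega_{\eta}$ is in general only $C^{0,\alpha}$ and \emph{not} Lipschitz; hence there is no bi-Lipschitz map $\phi_{\eta}:\Omega\to\Omega_{\eta}$ at all (bi-Lipschitz maps preserve the Lipschitz property of boundaries). Any normal-direction extension of $\eta$ into the bulk will have $\nabla\phi_{\eta}$ containing $\nabla\eta\in H^{1}(\omega)\subset\bigcap_{q<\infty}L^{q}(\omega)$, and no ``trace-type inequality'' can promote this to $L^{\infty}$. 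Your approximation $\eta_{n}\to\eta$ does not help either: the bi-Lipschitz constants of $\phi_{\eta_{n}}$ are governed by $\|\nabla\eta_{n}\|_{L^{\infty}}$, which is not uniformly controlled by $\|\eta_{n}\|_{H^{2}}$, so the embedding constant you obtain on $\Omega_{\eta_{n}}$ may blow up.

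The argument in \cite{LR14} takes a different route. Rather than transporting to $\Omega$ by a global bi-Lipschitz map, one exploits that $\partial\Omega_{\eta}$ is locally a \emph{graph} over $\omega$: subgraph domains admit a bounded linear extension operator $W^{1,p}(\Omega_{\eta})\to W^{1,p}(B)$ into a fixed larger domain $B\supset\Omega_{\eta}$ even when the graph function is merely continuous, and the classical Sobolev embedding on $B$ then yields the claim by restriction. The quantitative dependence on $\|\eta\|_{H^{2}(\omega)}$ and $\tau(\eta)$ enters through H\"older-type estimates exploiting $\nabla\eta\in L^{q}$ for all finite $q$, not through an $L^{\infty}$ bound on $\nabla\phi_{\eta}$. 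This is also why the trace operator in Lemma~\ref{lm:trace} has the loss $r<p$: composition with $\phi_{\eta}$ does map between Sobolev spaces, but with a drop in integrability, which is harmless for the extension/embedding argument but would wreck your pull-back/push-forward scheme.
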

\subsection{The Kakutani-Glicksberg-Fan fixed point theorem}
We will need the following set-valued fixed-point result which can be found in \cite{G52} or in  \cite[Chapter 2, Section 5.8]{GD03}
\begin{theorem}[Kakutani-Glicksberg-Fan]\label{thm: Kakutain} 
Let $C$ be a convex subset of a normed vector space $Z$ and let $F:C \to \mathcal{P}(C)$ be a  set-valued mapping which has closed graph. Moreover, let $F(C)$ be contained in a compact subset of $C$, and let $F(z)$ be non-empty, convex, and compact for all $z \in C$. Then $F$ possesses a fixed point, that is there is $c_0 \in C$ with $c_0 \in F(c_0)$. 
\end{theorem}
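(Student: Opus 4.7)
The plan is to reduce the infinite-dimensional problem to a sequence of finite-dimensional ones by exploiting the compactness of $K := \overline{F(C)} \subset C$, apply the finite-dimensional Kakutani theorem in each approximation, and pass to the limit using the closed-graph hypothesis. First I would fix, for each $n \in \mathbb{N}$, a finite $1/n$-net $\{p_1^n,\ldots,p_{k_n}^n\} \subset K$ and set $C_n := \mathrm{conv}\{p_1^n,\ldots,p_{k_n}^n\}$, which is a compact, convex, finite-dimensional subset of $C$ by convexity of $C$. The classical Schauder projection
\[
\pi_n(y) := \frac{\sum_i \mu_i^n(y)\, p_i^n}{\sum_i \mu_i^n(y)}, \qquad \mu_i^n(y) := \max\bigl(0,\, 1/n - \|y - p_i^n\|\bigr),
\]
is continuous from $K$ onto $C_n$ and satisfies $\|\pi_n(y) - y\| < 1/n$ for every $y \in K$. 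This is the object that transports the problem into a finite-dimensional arena without destroying the structural hypotheses on $F$.

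Next I would define the finite-dimensional set-valued approximation $F_n : C_n \to \mathcal{P}(C_n)$ by $F_n(x) := \mathrm{conv}\bigl(\pi_n(F(x))\bigr)$. Because $F(x)$ is non-empty, convex, compact and $\pi_n$ is continuous, the set $\pi_n(F(x))$ is non-empty and compact; taking convex hull (in the finite-dimensional space spanned by $C_n$) preserves compactness and keeps values inside $C_n$. The closed-graph property of $F$ transfers to $F_n$: if $x_m \to x$ in $C_n$ and $y_m \in F_n(x_m)$ with $y_m \to y$, then by Carath\'eodory one writes $y_m = \sum_{j=0}^{d} \lambda_{m,j}\, \pi_n(w_{m,j})$ with $w_{m,j} \in F(x_m)$, and compactness of $K$ together with the closed graph of $F$ yields $y \in F_n(x)$ in the limit. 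I would then invoke the \emph{finite-dimensional Kakutani theorem} (proved, for instance, by triangulating $C_n$, building a piecewise-linear Brouwer approximant $f^\epsilon_n$ that picks one value in $F_n(v)$ at each vertex $v$, applying Brouwer to $f^\epsilon_n$, and letting the mesh go to zero, using upper semicontinuity and convex values to pass to the limit) to obtain a fixed point $x_n \in F_n(x_n)$.

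The final step is the passage $n \to \infty$. By construction $x_n \in \mathrm{conv}(\pi_n(F(x_n)))$, and since $\|\pi_n(y) - y\| < 1/n$ on $K$ and $F(x_n)$ is already convex and contained in $K$, one deduces
\[
\mathrm{dist}\bigl(x_n,\, F(x_n)\bigr) < 1/n.
\]
Hence there exist $z_n \in F(x_n) \subset K$ with $\|x_n - z_n\| < 1/n$. By compactness of $K$ we extract a subsequence (not relabelled) with $z_n \to x^\star$, and the bound $\|x_n - z_n\| \to 0$ forces $x_n \to x^\star$ as well; in particular $x^\star \in C$ since $x^\star \in K \subset C$. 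The pairs $(x_n, z_n)$ lie in $\mathrm{graph}(F)$ and converge to $(x^\star, x^\star)$; the closed-graph assumption then yields $x^\star \in F(x^\star)$, which is the desired fixed point.

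The main obstacle, in my view, is step~two: ensuring that the finite-dimensional approximation $F_n$ genuinely inherits the convex-valuedness and upper-semicontinuity needed to apply Kakutani in $\mathbb{R}^{\dim C_n}$, while simultaneously remaining close enough to $F$ that the limit of approximate fixed points lands back on the graph of $F$. The Schauder projection is the key technical device that makes this possible; without a continuous, sufficiently good finite-dimensional retraction one cannot transfer the problem to Brouwer's theorem in a way that survives the passage to the limit. A secondary subtlety is that, in a general normed space $Z$ (as opposed to a Banach space), $\overline{\mathrm{conv}}(K)$ need not be compact, so one should either work in the completion of $Z$ or note that the argument only requires compactness of $K$ itself, since the $z_n$ (rather than $x_n$) are the ones extracted via compactness.
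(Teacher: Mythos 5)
Your argument is correct, but note that the paper does not prove this theorem at all: it is quoted as a known result with references to Glicksberg \cite{G52} and Granas--Dugundji \cite[Chapter 2, Section 5.8]{GD03}. What you have written is essentially the classical approximation proof in the spirit of Glicksberg's original argument: reduce to the finite-dimensional Kakutani theorem via Schauder projections onto the convex hulls of finite $1/n$-nets of the compact set $K\supseteq F(C)$, and recover a true fixed point in the limit from the closed-graph hypothesis. The key steps all check out: $K=\overline{F(C)}$ is compact because the compact superset of $F(C)$ is closed in $Z$; the inclusion $\pi_n(F(x_n))\subset F(x_n)+B(0,1/n)$ together with convexity of $F(x_n)$ gives $\mathrm{conv}\bigl(\pi_n(F(x_n))\bigr)\subset F(x_n)+B(0,1/n)$, hence $\operatorname{dist}(x_n,F(x_n))<1/n$; the Carath\'eodory argument does yield the closed graph of $F_n$ on the compact set $C_n$, which (with compact convex values) is exactly the upper semicontinuity needed for finite-dimensional Kakutani; and the final passage uses only compactness of $K$, so your remark that compactness of $\overline{\mathrm{conv}}(K)$ is never needed (and could fail in an incomplete normed space) is apt and is precisely why the theorem is stated with ``$F(C)$ contained in a compact subset of $C$.'' Two cosmetic points: $\pi_n$ maps $K$ \emph{into} $C_n$, not onto it, and since $F(x_n)$ is compact the distance is attained, so the selection of $z_n$ is immediate. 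Compared with simply citing the literature as the paper does, your route buys a self-contained, elementary proof whose only black box is the finite-dimensional Kakutani theorem (which you also sketch via Brouwer); an alternative in the cited sources proceeds through continuous approximate selections of upper semicontinuous convex-valued maps plus the Schauder--Tychonoff theorem, which is shorter if those tools are taken for granted.
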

We say a set-valued mapping $F : C \mapsto \mathcal{P}(C)$ has \emph{closed graph} provided that the set $\left\{ \left(x,y\right):y\in F\left(x\right)\right\} $ is closed in $X \times Y$ with the product topology or that, equivalently, for any sequences $x_n \to x$ and $y_n \to y$ with $y_n \in F(x_n)$ for any $ \ge 1$ it follows that $y \in F(x)$.

%\subsection{A compactness result}
%The following compactness result is of paramount importance for our work.  It is an easy adaptation of the \cite[Proposition 3.8]{LR14} and reads as
%\begin{proposition}\label{prop:compactness} 
%Consider a sequence $\varepsilon_{n} \to 0$ as $n \to \infty$ and let $(\mathbf{u}_{\varepsilon_{n}}, \eta_{\varepsilon_{n}}) \in V_{S, \text{per}}^{\eta_{\varepsilon_{n}}}$ of weak time-periodic solutions for the system \eqref{eq:system} 
%Suppose  that \[
%\sup_{n}\left(\tau\left(\eta^{\varepsilon_{n}}\right)+\left\Vert \eta^{\varepsilon_{n}}\right\Vert _{V_{K,\text{per}}}+\left\Vert \mathbf{u}^{\varepsilon_{n}}\right\Vert _{V_{F,\text{per}}}\right)<\infty.\] 
%Then the seuqnece $(\partial_{t} \eta ^{\varepsilon_{n}},\mathbf{u}^{\varepsilon_{n}})$ is relatively compact in $L^{2} (I \times \omega ) \times L^{2}(I\times \mathbb{R}^{3})$ , where $\mathbf{u}_{\varepsilon_{n}}$ was extended by $0$ to $\mathbb{R}^{3}$.
%\end{proposition}

\end{document}